\documentclass[11pt]{amsart}

\usepackage[T1]{fontenc}
\usepackage{amsmath,amssymb,amsfonts,amsthm,mathtools,mathrsfs}
\usepackage{enumitem}
\usepackage[margin=1in]{geometry}
\usepackage[colorlinks=true,linkcolor=blue,citecolor=blue,urlcolor=blue]{hyperref}

\numberwithin{equation}{section}
\allowdisplaybreaks
\theoremstyle{plain}
\newtheorem{theorem}{Theorem}[section]
\newtheorem{lemma}[theorem]{Lemma}
\newtheorem{proposition}[theorem]{Proposition}
\newtheorem{corollary}[theorem]{Corollary}

\theoremstyle{definition}
\newtheorem{definition}[theorem]{Definition}
\newtheorem{aconjecture}{Conjecture}

\theoremstyle{remark}
\newtheorem{remark}[theorem]{Remark}

\newcommand{\Hom}{\mathrm{Hom}}
\newcommand{\End}{\mathrm{End}}
\newcommand{\GL}{\mathrm{GL}}
\newcommand{\Cl}{\mathrm{Cl}}
\newcommand{\Aut}{\mathrm{Aut}}

\newcommand{\Q}{\mathbb{Q}}
\newcommand{\Z}{\mathbb{Z}}

\newcommand{\CO}{\mathcal{O}}

\begin{document}

\title[Yun and overorder zeta functions]{Yun's zeta function and the overorder zeta function for Gorenstein cubic orders}

\author{Taiwang Deng}
\address[T.D.]{Beijing Institute of Mathematical Sciences and Applications (BIMSA), Huairou District, 100084, Beijing\\
China}
\email{dengtaiw@bimsa.cn}

\author{Malors Espinosa}
\address[M.E.]{Yau Mathematical Sciences Center, Tsinghua University, Haidian District, Beijing, 100084, China.}
\email{espino41@mail.tsinghua.edu.cn}

\subjclass[2020]{Primary 11S45; Secondary 11R54, 11F72}
\keywords{ cubic orders, Yun's zeta function, overorder zeta functions,
orbital integrals, local Kloosterman series, Beyond Endoscopy}

\date{}

\begin{abstract}
For every Gorenstein cubic \(\Z\)-order, we prove an identity equating Yun's zeta function,
defined by counting finite-index submodules of the trace dual, with the
explicit overorder zeta function introduced in our previous work on Beyond Endoscopy
for \(\mathrm{GL}_3\). This is posed as Conjecture A in an early draft of Deng-Espinosa and  Lee subsequently
proved the functional equation of the overorder
zeta function, making the Deng--Espinosa isolation of the trivial representation
fully unconditional. His argument computes the local factors explicitly.  
Our proof is independent of Lee's and does not evaluate the individual
cubic overorder factors: it matches natural decompositions of the two sides
and concludes by induction.  As an application, we give a short, uniform
evaluation of the local \(\mathrm{GL}_3\) Kloosterman Dirichlet series
in the Poisson-summation argument of Deng--Espinosa.  The direct local
 analysis of the Kloosterman series occupies nearly ninety pages in Deng--Espinosa while our treatment  here replaces its case-by-case enumeration with a
short uniform proof.  
\end{abstract}

\maketitle

\tableofcontents

\section{Introduction}\label{sec:introduction}

\subsection{Background}
Zeta functions of arithmetic orders were introduced by Solomon
\cite{Solomon77} and developed systematically by Bushnell and Reiner
\cite{BushnellReiner80,BushnellReiner81a,BushnellReiner81b}.  For
nonmaximal orders, these functions interpolate between the arithmetic of
the order and that of its normalization.  If \(R\) is the maximal order of
a number field, its trace dual is invertible and its ideal-counting series
is the usual Dedekind zeta function.  For a nonmaximal order the trace dual
need not be invertible, and one must count all of its finite-index
\(R\)-submodules.  Yun introduced the resulting series \(J_R(s)\) and proved
that its completion has the same basic analytic properties as a Dedekind
zeta function \cite[Theorem~1.2]{ZYun}.  Locally, the normalized factor is a
polynomial in \(q^{-s}\) and satisfies a functional equation
\cite[Theorem~2.5]{ZYun}.  When the order is generated by a regular
semisimple element, its special values recover the corresponding orbital
integral \cite[Corollary~4.6]{ZYun}.  Yun's construction therefore links the
ideal theory of an order with the orbital integrals attached to its
generators.

This link is particularly important in the Beyond Endoscopy program,
proposed by Langlands as a trace-formula approach to functoriality
\cite{LanBE04}.  Altu\u{g}'s work for \(\GL_2\) showed how Poisson summation
can isolate special spectral contributions from the regular elliptic part of
the trace formula \cite{AliI}.  Arthur \cite{Arthur} identified a principal arithmetic
obstacle to extending this method to \(\GL_{n}\): one needs a higher-rank
expression for the finite orbital integrals that generalizes Langlands'
divisor-sum formula and retains the symmetry required by Poisson summation.
For the monogenic order
\(R_\gamma=\Z[\lambda]/(p_\gamma(\lambda))\), Arthur emphasized Yun's
identity
\[
\operatorname{Orb}(f_q,\gamma)
=\widetilde J_{\gamma,q}(0)
=\widetilde J_{\gamma,q}(1),
\]
where \(f_q\) is the characteristic function of the space of matrices 
\(M_{n}(\Z_q)\).  He interpreted Yun's polynomial as a natural
extension of the orbital integral, whose functional equation supplies
the symmetry \(s\mapsto1-s\).  For \(\GL_2\), this polynomial specializes to
Langlands' local orbital-integral formula \cite[Lemma~1]{LanBE04}, used by
Altu\u{g} in \cite[\S2.2.2]{AliI} through the disguise of Zagier's L function \cite{Zagier}.  Arthur proposed that the same
structure should enter Poisson summation in higher rank
\cite[\S3]{Arthur}.

\subsection{Main results}
In our work \cite{DengEspinosaGL3PS} (Deng-Espinosa),  for \(\GL_3\) we isolate the contribution of the trivial representation to the elliptic part 
of Arthur Selberg trace formula conditional on a conjecture 
(Conjecture~A) which predicts that, for a Gorenstein quadratic or
cubic order \(R\), Yun's zeta function agrees with an overorder zeta
function introduced there after restoring the Riemann zeta factor:
\[
J_R(s)=L(s,R)\zeta_\Q(s).
\]
The main result of this paper proves the local version of this identity for every Gorenstein cubic order and every residue
characteristic.

\begin{theorem}[Main theorem]
\label{thm:local-conjecture-a}
Let \(q\) be a prime, let \(E\) be an \'etale cubic \(\Q_q\)-algebra, and let
\(A\subset E\) be a Gorenstein cubic \(\Z_q\)-order.  With the local
normalizations of Section~\ref{sec:local-normalization} and
\(t=q^{-s}\), one has
\begin{equation}\label{eq:local-comparison-final}
J_A(s)=\widetilde L_A(t).
\end{equation}
Equivalently,
\[
\widetilde J_A(s)=\widetilde L_q(s,A).
\]
\end{theorem}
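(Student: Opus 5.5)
We argue by induction on the conductor \(\delta(A)=\operatorname{length}_{\Z_q}(\CO_E/A)\). This follows the abstract's strategy: we decompose both sides along the lattice of overorders of \(A\) and match the pieces, without evaluating any cubic local factor explicitly.

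\emph{Left-hand side.} Write \(A^\vee\) for the trace dual. Yun's series is
\[
J_A(s)=\sum_{M\subseteq A^\vee}[A^\vee:M]^{-s},
\]
summed over finite-index \(A\)-submodules \(M\). Each such \(M\) is a fractional \(A\)-ideal, so it has a well-defined multiplier ring \(B=\End(M)=(M:M)\), which is an overorder \(A\subseteq B\subseteq\CO_E\). Grouping the terms by this multiplier ring gives
\[
J_A(s)=\sum_{A\subseteq B\subseteq \CO_E} J_A^{B}(s),
\]
where \(J_A^B\) counts the \(M\) with \((M:M)=B\). In the cubic case every overorder of a Gorenstein order \(A\) has the form \(B=\Z_q+ \mathfrak{f}\) for some ideal \(\mathfrak f\) of \(\CO_E\), or is again Gorenstein. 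We will use Bass-type facts that hold in rank \(\le 3\): every fractional ideal of a cubic order is invertible over its multiplier ring when that ring is Gorenstein, and otherwise we handle it separately. With these facts, \(J_A^B(s)\) equals the number of \(B\)-lattices of the form \(B^\vee x\) inside \(A^\vee\), weighted by index. Since \(B^\vee\subseteq A^\vee\), this gives \(J^B_A(s)=[A^\vee:B^\vee]^{-s}\,\zeta^{\mathrm{inv}}_B(s)\). Here \(\zeta^{\mathrm{inv}}_B\) is the series counting invertible (principal) ideals of the local ring \(B\), and it depends only on the residue data of \(B\). The index \([A^\vee:B^\vee]=[B:A]\) is a power of \(q\).

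\emph{Right-hand side.} We recall that \(\widetilde L_A(t)\) from Section~\ref{sec:local-normalization} is defined in \cite{DengEspinosaGL3PS} as a sum over overorders \(B\supseteq A\) of \([B:A]^{-s}\) (times \(t\)-powers) multiplied by a local factor attached to \(B\). We rewrite it in the same shape, as a sum over the same overorders of index powers times a factor depending on \(B\). The identity then reduces to matching the factor attached to \(B\) on the two sides. By a Möbius inversion on the poset of overorders, this in turn reduces to the single equality
\[
J_A(s)-\sum_{B\supsetneq A}(\cdots)=\text{(top term for }A),
\]
and the terms with \(B\supsetneq A\) are covered by the induction hypothesis, since \(\delta(B)<\delta(A)\). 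The non-Gorenstein overorders \(B\) must be handled by the same grouping: they contribute terms that appear on both sides identically, because both sides only see \(B\) through its index and \(B^\vee\). The base case is \(A=\CO_E\). There \(J_{\CO_E}(s)\) is the local Dedekind factor of \(E\) and \(\widetilde L\) is normalized to agree with it. This is checked over the splitting types of \(E\) (split, partially split, inert, ramified), all of which are immediate.

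\emph{Main obstacle.} The hard step is the top term: showing that the contribution of modules \(M\) with \((M:M)=A\) exactly matches the \(A\)-term of \(\widetilde L_A\). Since \(A\) is Gorenstein, these \(M\) are precisely the invertible modules \(A^\vee x\) with \(x\in A^{\times}\backslash (A^\vee)^{-1}A^\vee\cap E^\times\). Counting them by index reduces to the series \(\sum_{x\in A\cap E^\times/A^\times}|N(x)|^{s}\). This should equal the prescribed factor \(\zeta_\Q\)-normalized via the local functional equation \cite[Theorem~2.5]{ZYun}. We would first try to compare both sides through their functional equations and degrees in \(t\), which are determined by \(\delta(A)\), to cut down the unknown coefficients. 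If that is not enough, we would use the Gorenstein condition to identify \(A/\mathfrak{c}\) with a local Artinian ring and compute unit indices \([\CO_E^\times:A^\times]\) uniformly in terms of \(\mathfrak c\).
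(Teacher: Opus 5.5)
\textbf{There is a genuine gap: the two sides cannot be matched overorder by overorder, and your induction has to pass through non-Gorenstein overorders, where the identity is false.}

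Your first step is sound. Grouping the submodules of $A^\vee$ by multiplicator ring is exactly Proposition~\ref{prop:yun-exact-decomposition}, and for a Gorenstein overorder $B$ the piece is $[B:A]^{-s}\zeta^P_B(s)$, where $\zeta^P_B$ counts principal ideals $xB$ with $x\in B$.

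\textbf{The decompositions do not match term by term.} $\widetilde L_A(t)=\sum_B[B:A]^{1-2s}C_B(t)$ carries weight $[B:A]^{1-2s}=q^{\delta(B/A)}t^{2\delta(B/A)}$, whereas Yun's side carries $[B:A]^{-s}=t^{\delta(B/A)}$. Also $\zeta^P_B$ is not $C_B$, and it is not determined by residue data: $\zeta^P_{\CO_2}=1+qt^2+\cdots$ and $\zeta^P_{\CO_1}=1+(q-1)t^2+\cdots$, although both have residue field $\mathbb F_q$.
\begin{itemize}
\item Already for $\CO_1=\{(a,b)\in\Z_q^2:a\equiv b \bmod q\}$ the Yun pieces are $1+(q-1)t^2/(1-t)^2$ and $t/(1-t)^2$.
\item The overorder pieces are $1/(1-t)$ and $qt^2/(1-t)^2$.
\item Only the totals, both $(1-t+qt^2)/(1-t)^2$, agree.
\end{itemize}
So your ``top term'' $\zeta^P_A$ is not the $A$-term $C_A(t)$ of $\widetilde L_A$, and the identity you would need for it is a new statement the proposal does not supply.

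\textbf{The functional-equation fallback does not close this.} $P(t)=\zeta_E(s)^{-1}J_A(s)$ has degree $2\delta(A)$. The relation $P(t)=q^{\delta(A)}t^{2\delta(A)}P(q^{-1}t^{-1})$ only recovers the top coefficients from the first $\delta(A)+1$. That suffices when $\delta(A)=1$, which is Corollary~\ref{cor:first-overorder-terminal-cases}, but otherwise leaves the real counting open.

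\textbf{The induction cannot run through the overorder poset.} The theorem concerns only Gorenstein orders. Yet in case (c) of Proposition~\ref{prop:first-overorder-single-residue}, the essential one, the unique index-$q$ overorder is non-Gorenstein, and for such orders $J_B=\widetilde L_B$ fails. Concretely:
\begin{itemize}
\item Let $E/\Q_q$ be unramified cubic with $\CO_E=\Z_q[\alpha]$, and let $B=\Z_q+q\CO_E$, the first overorder of the Gorenstein order $\Z_q[q\alpha]$.
\item Its only overorders are $B$ and $\CO_E$, so $\widetilde L_B=(1+qt)/(1-t)+q^2t^4/(1-t^3)$ has $t^2$-coefficient $q+1$.
\item On the other side, $\mathfrak m_BB^\vee=\CO_E$ has index $q^2$ in $B^\vee$ and is the only $B$-submodule of that index. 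Hence $[t^2]J_B=1$.
\end{itemize}
So your claim that non-Gorenstein overorders ``contribute terms that appear on both sides identically'' is false, and the M\"obius inversion has nothing to invert against. Your structural claim is also false: $\Z_q+q\Z_q[q\alpha]$, an overorder of the Gorenstein order $\Z_q[q^2\alpha]$, is neither Gorenstein nor of the form $\Z_q+\mathfrak f$.

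\textbf{What is missing} is a recursion that ties $J_A$ to a Gorenstein order of smaller $\delta$, while handling the non-Gorenstein layer explicitly. The paper does this as follows.
\begin{itemize}
\item It reduces to $A=\Z_q[q\alpha]$ in case (c).
\item Every strict overorder contains $S=\Z_q+q\Z_q[\alpha]$, which gives $\widetilde L_A=(1-t)^{-1}+qt^2\widetilde L_S$.
\item It expands $\widetilde L_S$ by a root-indexed recursion on the binary cubic $qB_\Phi$.
\item Hermite-normal-form counting gives $J_A=(1-t)^{-1}+qt^2C_\square+q^2t^4\mathscr B_\Phi+q^3t^6J_U$, where $U=\Z_q[\alpha]$ is monogenic and $\delta(U)=\delta(A)-3$.
\end{itemize}
The induction therefore only ever invokes monogenic, hence Gorenstein, orders.
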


Theorem~\ref{thm:local-conjecture-a} is proved in
Section~\ref{sec:proof-local-comparison}.  Applying it at every prime gives Corollary~\ref{cor:global-conjecture-a}: Conjecture~A holds for every
Gorenstein cubic order over \(\Z\).

The theorem identifies Yun's intrinsic lattice-counting series with the
explicit local factor whose coefficients occur in the Deng--Espinosa
Poisson-summation argument.  It therefore confirms, in the first
higher-rank case, Arthur's expectation for Yun's
zeta function.  

Combining the above theorem with the parametrization of ideals by
Hermite normal form (HNF) yields a second main result.  Given a prime \(q\) and
a fixed \(0\ne c\in\Z_q\), let \(D_q(z)\) denotes the
corresponding local Kloosterman series defined in
Section~\ref{sec:kloosterman-application}.

\begin{theorem}
\label{thm:kloosterman-local-factor}
 If
\(e=v_q(c)\) and \(x=q^{-z}\), then
\begin{equation}\label{eq:kloosterman-local-factor-rational}
D_q(z)
=
\frac{(1-x^{e+1})(1-x^{e+2})
(1-q^{-1}x)(1-q^{-1}x^2)}
{(1-x)(1-x^2)^2(1-x^3)}.
\end{equation}
In terms of \(z\), we have 
\[
D_q(z)
=
\frac{
(1-q^{-z(e+1)})(1-q^{-z(e+2)})
(1-q^{-z-1})(1-q^{-2z-1})
}{
(1-q^{-z})(1-q^{-2z})^2(1-q^{-3z}).
}.
\]
\end{theorem}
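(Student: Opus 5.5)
We will not evaluate the Kloosterman sums directly. Instead we will identify $D_q(z)$ with a weighted sum of local Yun zeta functions and then use Theorem~\ref{thm:local-conjecture-a} together with Yun's structural results.

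\textbf{Step 1: reduce to a sum over characteristic polynomials.} Unfold the definition of $D_q(z)$ from Section~\ref{sec:kloosterman-application}. The local $\GL_3$ Kloosterman sum attached to $c$ and a modulus $q^k$ is a character sum over cubic characteristic polynomials, with fixed constant coefficient determined by $c$, reduced modulo $q^k$. Applying Poisson summation in the non-constant coefficients, the Dirichlet series in $x=q^{-z}$ becomes
\[
\sum_{p}\ w(p)\,\widetilde J_{A_p}(s),
\]
where $p$ runs over cubic polynomials with $v_q(\text{constant term})=e$, $A_p=\Z_q[\lambda]/(p)$ is the monogenic (hence Gorenstein) order, and $w(p)$ is an explicit measure weight.

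\textbf{Step 2: rewrite Yun's series as a lattice count.} By the Hermite normal form parametrization, $J_{A_p}$ counts finite-index $A_p$-submodules of the trace dual. Equivalently, it counts lattices $\Lambda\subset\Z_q^3$ that are stable under the companion matrix. Interchanging the order of summation, we count pairs $(\Lambda,\gamma)$ with $\gamma\in M_3(\Z_q)$, $\gamma\Lambda\subset\Lambda$, and $v_q(\det\gamma)=e$. For a fixed HNF lattice of index $q^m$, the volume of such $\gamma$ depends only on $m$ and the elementary divisor type of the lattice. The contribution is therefore a sum over $\Lambda\in\GL_3(\Z_q)\backslash\GL_3(\Q_q)\cap M_3(\Z_q)$ of a Hecke-type weight.

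\textbf{Step 3: evaluate via the Satake transform.} Using Theorem~\ref{thm:local-conjecture-a}, we replace each $\widetilde J$ by $\widetilde L_q$. The resulting sum factors through the standard generating function
\[
\sum_{\Lambda} |\Lambda/\Z_q^3|^{-z}
=\frac{1}{(1-x)(1-qx)(1-q^2x)}
\]
after the $q$-power normalizations. Imposing the constraint $v_q(\det)=e$ truncates the Hecke series. Summing the truncated series gives a geometric sum in $x$, which produces the numerator factors $(1-x^{e+1})(1-x^{e+2})$, and the denominators $(1-x)(1-x^2)^2(1-x^3)$ arise from the three elementary divisors.

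\textbf{Step 4: identify the closed form.} Finally, we verify the factors $(1-q^{-1}x)(1-q^{-1}x^2)$ by comparing coefficients at $e=0$ and then using the recursion in $e$. The recursion comes from the fact that multiplying $c$ by $q$ shifts the determinant condition by one.

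The main obstacle is Step 2: one must show that the weight attached to each lattice is uniform across all orders, so that no case split by splitting type of $E$ survives. We will attempt this first using the $\GL_3(\Z_q)$-invariance of the measure on $\gamma$ given $\Lambda$. If that fails, the fallback is to check the identity separately for each splitting type (split, partially split, inert, ramified), using that $\widetilde L_A$ depends only on the overorder lattice.
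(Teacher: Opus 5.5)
There is a genuine gap, and it is the step you flag as the main obstacle. The weight attached to a fixed lattice does \emph{not} depend only on its index and elementary-divisor type, so the Hecke-type evaluation in Step~3 does not apply.

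The average defining $D_q$ runs over the two-parameter slice of companion matrices $C_{a,b}$ of $\Phi_{a,b}=X^3-aX^2+bX-c$, with the constant term fixed. This slice is not stable under $\GL_3(\Z_q)$-conjugation, so there is no invariance to exploit. The claim already fails in index $q$. All index-$q$ sublattices of $\Z_q^3$ lie in one $\GL_3(\Z_q)$-orbit. Yet, in the basis $1,\beta,\beta^2$:
\begin{itemize}
\item the lattice with $\Lambda/q\Z_q^3=\ker(1,\lambda,\lambda^2)$ is $C_{a,b}$-stable exactly for the $(a,b)\bmod q$ with $\bar\Phi_{a,b}(\lambda)=0$; that is $q$ classes if $\lambda\ne0$, and $q^2$ or $0$ classes if $\lambda=0$, according as $e\ge1$ or $e=0$;
\item the lattice with $\Lambda/q\Z_q^3=\ker(0,0,1)$ is stable for no $(a,b)$.
\end{itemize}

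Replacing the slice by all $\gamma\in M_3(\Z_q)$ with $v_q(\det\gamma)=e$ computes a different quantity. For non-regular $\gamma$, the number of $\gamma$-stable lattices is not a function of the characteristic polynomial. On the regular locus, the fiber volumes of the characteristic-polynomial map depend on the factorization of $\bar\Phi$ modulo $q$. That reinstates exactly the case split your fallback proposes and the theorem is meant to avoid. Step~4 is circular as well: comparing coefficients at $e=0$ presupposes the answer, and no recursion in $e$ is established, since multiplying $c$ by $q$ changes the orders $R(a,b)$ nontrivially.

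The bridge between $D_q$ and Yun's series is also misidentified. There is no character sum or Poisson summation here: $K_q(q^v,q^r)$ is the plain sum over $(a,b)\bmod q^{v+2r}$ of the overorder quantities $S_q(a,b;v,r)$. Regroup by $k=v+2r$ and apply Theorem~\ref{thm:local-conjecture-a} in the direction overorders $\to$ lattices. This gives $\mathcal A_k=q^{-2k}\sum_{(a,b)\bmod q^k}[t^k](1-t)J_{a,b}(t)$. The object here is the unnormalized $(1-t)J_{a,b}$, not Yun's $\widetilde J$, whose factor $q^{\delta s}$ varies with $(a,b)$. Your Step~3 applies the theorem in the opposite direction, which undoes this reduction. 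It then appeals to the sublattice zeta function of $\Z_q^3$, which is again a lattice count.

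The missing idea is to perform the interchange of summation \emph{inside} the slice.
\begin{itemize}
\item In the parametrization by $\mathfrak C_{\Phi_{a,b}}(\alpha,\beta)$, with $\alpha\le\beta$, fix the lattice data $X,Y$, with truncated valuations $\rho,\sigma$.
\item Write $\eta=a-X-Y$ and parametrize the admissible $b$ by $\tau$. The conditions on $(a,b,Z)$ then collapse to the single linear congruence $XY\eta+q^\alpha Y\tau-q^\beta(X-Y)Z\equiv c\pmod{q^{\alpha+\beta}}$.
\item The image of this linear map is $q^{\rho+\sigma}\Z_q/q^{\alpha+\beta}\Z_q$. So the weight is $q^{\alpha+\beta+\rho+\sigma}$ if $\rho+\sigma\le e$ and $0$ otherwise. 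It depends on the valuations of the off-diagonal HNF entries, not on an elementary-divisor type.
\item Summing over $\rho,\sigma$ gives $\frac{(1-q^{-1}x)(1-q^{-1}x^2)}{(1-x)(1-x^2)}\cdot\frac{(1-x^{e+1})(1-x^{e+2})}{(1-x)(1-x^2)}$. The scalar HNF shift contributes $(1-x^3)^{-1}$, and the factor $(1-t)$ contributes the prefactor $(1-x)$.
\end{itemize}
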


Theorem~\ref{thm:kloosterman-local-factor} is proved in
Section~\ref{sec:kloosterman-application}.  Its proof is
uniform in \(q\), including residue characteristics \(2\) and \(3\), and
treats unit and ramified constant terms simultaneously. 
The same  argument can also be applied to shorten and unify 
the auxiliary local-factor calculation in Altu\u{g}'s work for
\(\mathrm{GL}_2\).

 If \(p\) is a
prime and \(k\ge1\), take the global parameter \(c=\pm p^k\), so that the
polynomial has constant coefficient \(\mp p^k\).   We then obtain, for \(\Re(z)>1\),
\begin{equation}\label{eq:introduction-kloosterman-euler-product}
\prod_q D_q(z)
=
\frac{\zeta(2z)\zeta(3z)}
{\zeta(z+1)\zeta(2z+1)}
\frac{1-p^{-z(k+1)}}{1-p^{-z}}
\frac{1-p^{-z(k+2)}}{1-p^{-2z}}.
\end{equation}
 This recovers the formula occurring in
Deng--Espinosa.

Although we keep our base field to be $\Q$ (resp. $\Q_q$) to keep the same notation with \cite{DengEspinosaGL3PS},
the above theorems naturally extend to any number field $F$ (resp. finite extension of $\Q_q$).

\subsection{Previous work}
Deng--Espinosa develop Arthur's proposed mechanism for \(\GL_3(\Q)\)
\cite[\S1]{DengEspinosaGL3PS}.  Their immediate objective is a concrete
expansion of
\[
\mathrm I_{\mathrm{ell}}(f)-\operatorname{Tr}(\mathbf 1(f)),
\]
the difference between the regular elliptic contribution and the trace of
the trivial representation.  Isolating the trivial representation is
essential because it contributes a non-tempered term that must be
removed before the remaining spectral average can converge
\cite{frenkel2010formule}.
Regular elliptic classes are described by cubic characteristic polynomials
and hence by the monogenic cubic orders generated by their roots. Following Altu\u{g} \cite{AliI},
Deng--Espinosa rewrite the finite orbital factors as values of zeta
functions of cubic orders, apply an approximate functional equation, and
then use Poisson summation in the coefficients of the characteristic
polynomial.  The functional equation produces the two weighted Dirichlet
sums from which Poisson summation and contour shifting recover the trivial
representation as a residue
\cite[\S5, \S7, and~\S 9]{DengEspinosaGL3PS}.

For this purpose, Deng--Espinosa introduce an overorder zeta function
\(L(s,R)\).  It is a finite sum over the orders between \(R\) and its
normalization, with a correction factor determined by the Cohen--Macaulay
type of each local cubic overorder
\cite[Definition~12]{DengEspinosaGL3PS}.  Its coefficients are suited to
Poisson summation, and its value at \(s=1\) gives the required orbital
integral \cite[Proposition~19]{DengEspinosaGL3PS}.  Deng--Espinosa originally
obtained its functional equation by conjecturing its relation to Yun's zeta function.

Lee subsequently computed all local cubic overorder factors explicitly
\cite[Theorems~4.3, 4.5, 4.7, 4.9, and~4.13]{LeeCubicFE} and proved their
local functional equations
\cite[Theorems~5.3 and~5.4]{LeeCubicFE}.  His global functional equation
\cite[Theorem~1.3]{LeeCubicFE} applies to Gorenstein cubic orders.  As a consequence, he made the Deng--Espinosa isolation of the
trivial representation fully unconditional.  The present paper addresses
a different question: it proves the conjectural identification
of the two zeta functions, without computing the complete
local factor explicitly.

As an application, we obtain a new proof of the evaluation of the Dirichlet series associated with the Kloosterman-type sum of Deng--Espinosa, with a substantially sharper treatment of the local arithmetic. In the previous Deng--Espinosa manuscript, the local analysis, incorporating the preparatory study of overorders and periodicity, is distributed over more than one hundred pages and follows the framework of Altuğ \cite{AliI}. Their final enumeration requires separate treatments of generic, dyadic, and ternary residue characteristics, together with the unit and ramified constant-term cases. By contrast, Section~\ref{sec:kloosterman-application} avoids all such classifications: averaging over the HNF parameters reduces the argument to a single linear congruence whose image can be determined explicitly.

The quadratic comparison follows from the description of quadratic
overorders and the explicit local formulas in
\cite[Proposition~3.10 and Theorem~5.4]{Malors}.  In degree three, however, overorders need not
form a chain and may be non-Gorenstein.  Yun's ideal classes can still be
partitioned by their multiplicator rings, but this partition does not by
itself determine the contribution attached to a fixed overorder.  Yun's
functional equation \cite[Theorem~2.5]{ZYun}, Lee's corresponding local
functional equations, and the equality at \(s=1\) from old version of Deng-Espinosa provide strong consistency checks,
but none of them implies Conjecture~A.

\subsection{Sketch of the proof}
The proof is local.  Proposition~\ref{prop:gorenstein-cubic-monogenic}
shows that a Gorenstein cubic \(\Z_q\)-order is monogenic unless
\(q=2\) and the order is \(\Z_2^3\).  This exceptional order is the
split maximal order.  Its occurrence is noteworthy: it is the unique
obstruction to monogenicity among local Gorenstein cubic orders, but it
is already an immediate case of the comparison and requires no separate
recursion.

For every other Gorenstein cubic order \(A\), we choose a monogenic
presentation and induct on
\[
\delta(A):=\operatorname{length}_{\Z_q}(\CO_E/A).
\]
The base \(\delta(A)=0\) is the maximal order \(A=\CO_E\).  Away from
this base, the defining cubic is not squarefree modulo \(q\).  If its
reduction has a simple root, then \(A\) is the product of \(\Z_q\) and a
quadratic order, so that branch terminates by the quadratic comparison.
Otherwise, after translating the generator, the residual polynomial is
\(X^3\).  Corollary
\ref{cor:first-overorder-terminal-cases} treats its direct terminal
alternatives.  In the sole recursive alternative, the distinguished
index-\(q\) overorder \(S\) satisfies
\[
S/qS\simeq\mathbb F_q\oplus M,
\qquad M^2=0,
\qquad \dim_{\mathbb F_q}M=2.
\]
Corollary~\ref{cor:square-zero-finite-induction} gives the first
structural step of the recurrence: it enumerates the index-\(q\)
overorders of such an \(S\) and separates them according to whether they
split, return to the monogenic Gorenstein class, or again have square-zero
special fiber.

Choose a good basis of \(S\) and write its Delone--Faddeev binary cubic form
as \(qB(X,Y)\) (see Section~\ref{subsec: parametrize-cubic}).  The projective roots of \(\bar B\) parametrize the
index-\(q\) overorders of \(S\), and the full overorder sum decomposes into
the corresponding root-indexed parts (Theorem \ref{thm:q-scaled-overorder-decomposition}).  The same construction applies to
each recursive part, while the discriminant valuation drops by \(2\).
This gives a finite recursion for the overorder zeta function.

On Yun's side, Hermite normal form converts the count of stable sublattices
into congruences governed by the same cubic polynomial (see Proposition~\ref{prop:peeled-hnf-series}  and Proposition~\ref{prop:hnf-root-count-parametrization}).  The recursion for
Yun's local zeta factor (Theorem \ref{thm:yun-local-zeta-recursion}) shows
that these congruences have the same root-indexed recursion.
It compares \(A=\Z_q[q\alpha]\) with the larger monogenic order
\(U=\Z_q[\alpha]\).  Since \([U:A]=q^3\), one has
\(\delta(U)=\delta(A)-3\), and induction on \(\delta(A)\) completes the
comparison.  The containment of the distinguished first overorder in every
strict overorder of \(A\) (Proposition \ref{prop:first-overorder-contained}) is what allows the final overorder sum to satisfy
the same induction.  This argument works in every residue characteristic.

Section  \ref{sec:kloosterman-application} concerns the evaluation of the
Dirichlet series \(D_q(z)\).   After grouping the series according to the
diagonal degree \(k=v+2r\), the Yun--overorder identity expresses the
\(k\)-th coefficient as an average of
\([t^k](1-t)J_{a,b}(t)\), where \(J_{a,b}(t)\) is Yun's local zeta factor
attached to the cubic order \(R(a,b)\).  Formula
\eqref{eq:kloosterman-hnf-formula} then identifies these coefficients with
generating functions of the sets
\(\mathfrak C_{\Phi_{a,b}}(\alpha,\beta)\).

The defining congruences of these sets reduce, after fixing the
variables \(X,Y\), to a single linear congruence.  If
\(\rho,\sigma\) denote the truncated \(q\)-adic valuations of \(X,Y\), the
image of the corresponding linear map is
\(q^{\rho+\sigma}\Z_q/q^n\Z_q\).  Hence the congruence is solvable exactly
when \(\rho+\sigma\leq v_q(c)\).  Summing the resulting fiber sizes gives
\eqref{eq:kloosterman-local-factor-rational}.  This argument is uniform in
the residue characteristic and avoids the factorization-type and
small-characteristic case distinctions required in previous approaches.

Section~\ref{sec:statement} fixes the global and local normalizations.
Section~\ref{sec:yun-exact-decomposition} recalls the binary-cubic
parametrization and records the decomposition by multiplicator ring.
Sections~\ref{sec:remaining-single-residue} and
\ref{sec:containment-hnf} reduce the proof to the square-zero first-overorder
case and derive the congruence-counting formula for Yun's local zeta
factor.  Sections~\ref{sec:primitive-induction-square-zero}-- \ref{sec:yun-local-zeta-recursion} establish the matching root-count and
overorder recursions.  The induction is completed in
Section~\ref{sec:proof-local-comparison}.  Finally,
Section~\ref{sec:kloosterman-application} averages the resulting
congruence counts and proves the uniform Euler factors of the local series $D_q(z)$.

\subsection{Further directions}
The cubic identity does not extend verbatim to arbitrary degree.  Simple
quartic examples indicate that the present overorder correction factor does
not retain enough information for a direct \(\GL_4\) analogue.  A
higher-degree comparison will therefore require a refined description of
the local contributions attached to an overorder, rather than only a formal
replacement of cubic orders by orders of rank \(n\).  Determining the
additional invariants and the recursion they satisfy is a natural next step
in understanding Yun's zeta functions for general \(\GL_n\).

This question is also motivated by Beyond Endoscopy.  Arthur's proposal
requires a higher-rank expression for finite orbital integrals that both
admits a functional equation and has coefficients suitable for Poisson
summation \cite[\S3]{Arthur}.  An appropriate generalization of the present
result could then provide the arithmetic input needed to extend the
Deng--Espinosa isolation of the trivial representation, and potentially of
other residual contributions, beyond \(\GL_3\).

\subsection*{Acknowledgements}
\addtocontents{toc}{\protect\setcounter{tocdepth}{1}}
We thank Zhaolin Li for pointing out the Proposition \ref{prop:gorenstein-cubic-monogenic}, which removes
the assumption on monogenicity of orders considered in the paper.
T. Deng is supported by  National Natural Science Foundation of
China, No. 12401013 and Beijing Natural Science Foundation, No. 1244042.
M. Espinosa thanks Jim Arthur, Bill Casselman and Clifton Cunningham for their continuous support.

\section{Conjecture A and local normalizations}\label{sec:statement}

Let \(s\) be a complex number and let \(R\) be a quadratic or cubic
\(\Z\)-order.  Write \(R^\vee:=\Hom(R,\Z)\) for the \(\Z\)-linear dual
of \(R\), viewed as an \(R\)-module.  Yun's zeta function \cite{ZYun} is
\[
J_R(s)=
\sum_{\substack{M\subseteq R^\vee\text{ an }R\text{-submodule}\\
[R^\vee:M]<\infty}}
[R^\vee:M]^{-s}.
\]
We recall the definition of overorder zeta function \(L(s,R_0)\) of
Deng--Espinosa \cite[Definition 12]{DengEspinosaGL3PS}.   Give a
prime \(q\) and an order $S$, let \(S_q\) be its completion at $q$. If \(\{\mathfrak m_i\}\) are the set of maximal ideals of  the semilocal order \(S_q\)
and \(k_i=S_q/\mathfrak m_i\), define 
\begin{equation}\label{eqn: residue-zeta}
\zeta_{S_q}(s)=\prod_i(1-|k_i|^{-s})^{-1},
\qquad
L_{S_q}(s)=\frac{\zeta_{S_q}(s)}{\zeta_{\Q_q}(s)},
\qquad
L_S(s)=\prod_q L_{S_q}(s).
\end{equation}
We also need the correction factor $h_S(s)$ which is 
multiplicative in $q$. Locally,
\[
h_{S_q}(s)=
\begin{cases}
1, & \text{if } S_q \text{ is Gorenstein},\\
1+q^{1-s}, & \text{otherwise}.
\end{cases}
\]
The dichotomy is governed by Cohen--Macaulay type,
using \cite[Proposition~3.4 and Theorem~3]{MS24}.

 Now if \(R_0\) is
a \(\Z\)-order contained in a maximal order \(\mathcal O\), define
\[
L(s,R_0)=\sum_{R_0\subseteq S\subseteq \mathcal O}
h_S(s)L_S(s)[S:R_0]^{1-2s},
\]
where the sum is over all overorders \(S\) of \(R_0\) in \(\mathcal O\).

The following is Conjecture~A from previous version of Deng--Espinosa.
\begin{aconjecture}\label{conj:identification-to-Yun}
Let \(R\) be a Gorenstein quadratic or cubic order over \(\Z\).  Then
\[
J_{R}(s)=L(s, R)\zeta_\Q(s).
\]
\end{aconjecture}

\begin{remark}
The identity in  appeared as
Conjecture~1 in an earlier version of \cite{DengEspinosaGL3PS}.  Geometrically, the identity
is related to the "H=L" conjecture of I.~Cherednik~\cite[Conjecture 4.5]{C18}, see also \cite{CH25}.
\end{remark}

\subsection{Local normalization}
\label{sec:local-normalization}

For a finite \(\Z_q\)-module \(M\), let
\(\ell_{\Z_q}(M)\) denote its length as a \(\Z_q\)-module, i.e.,
\(\#M=q^{\ell_{\Z_q}(M)}\).
Let \(R\) be an order in a number field \(E\), and let \(r_1\) and
\(r_2\) be the numbers of real and complex places of \(E\), respectively.
Using the trace pairing to identify \(E\) with its \(\Q\)-linear dual, we
identify \(R^\vee\) with the trace-dual lattice
\[
R^\vee=\{x\in E:\operatorname{Tr}_{E/\Q}(xR)\subseteq\Z\}.
\]
Following Yun \cite[\S3.2]{ZYun}, define
\[
D_R:=\#(R^\vee/R)=|\operatorname{disc}(R)|,
\qquad
\Delta_E:=\operatorname{disc}(E/\Q)=\operatorname{disc}(\CO_E).
\]
Thus \(D_R\) is the absolute discriminant of \(R\), whereas
\(\Delta_E\) is the signed discriminant of \(E\), and
\begin{equation}\label{eq:global-discriminant-index}
D_R=[\CO_E:R]^2|\Delta_E|.
\end{equation}

For a prime \(q\), let \(R_q=R\otimes_\Z\Z_q\), and
\[
\delta(R_q):=\ell_{\Z_q}(\CO_{E_q}/R_q).
\]
If \(E_q=\prod_i E_{q,i}\) and the residue degree of \(E_{q,i}/\Q_q\) is
\(n_i\), Yun's local normalization \cite[(2.5)]{ZYun} is
\[
\widetilde J_{R_q}(s)
:=
q^{\delta(R_q)s}\prod_i(1-q^{-n_i s})\,J_{R_q}(s).
\]
Let
\[
\Gamma_{E,\infty}(s):=
(\pi^{-s/2}\Gamma(s/2))^{r_1}
((2\pi)^{1-s}\Gamma(s))^{r_2}.
\]
Yun's completed zeta function and the completed Dedekind zeta function of
\(E\) are, respectively,
\[
\Lambda_R^{\mathrm{Yun}}(s)
=D_R^{s/2}\Gamma_{E,\infty}(s)J_R(s),
\qquad
\Lambda_E(s)
=|\Delta_E|^{s/2}\Gamma_{E,\infty}(s)\zeta_E(s).
\]
Yun proves \cite[(3.2)]{ZYun} that
\[
\frac{\Lambda_R^{\mathrm{Yun}}(s)}{\Lambda_E(s)}
=\prod_q\widetilde J_{R_q}(s).
\]
On the other hand, the normalization used in the overorder expression is
\[
\widetilde L_q(s,R)
:=
q^{\delta(R_q)s}
\sum_{R_q\subseteq S\subseteq\CO_{E_q}}
h_S(s)\frac{\zeta_S(s)}{\zeta_{E_q}(s)}[S:R_q]^{1-2s}.
\]
The local-global index formula used in the overorder normalization
\cite[\S 4.3.2]{DengEspinosaGL3PS} gives
\[
[\CO_E:R]^sL(s,R)\frac{\zeta_\Q(s)}{\zeta_E(s)}
=\prod_q\widetilde L_q(s,R).
\]
By \eqref{eq:global-discriminant-index}, Conjecture~A follows once one
proves the local identities
\begin{equation}\label{eq:local-yun-deng-espinosa-identity}
\widetilde J_{R_q}(s)=\widetilde L_q(s,R)
\end{equation}
for every prime \(q\).  

For the local calculations below, we suppress the subscript \(q\): thus
\(E\) is an \'etale \(\Q_q\)-algebra and
\(R\subseteq\CO_E\) is a \(\Z_q\)-order.  Put \(t=q^{-s}\).  For a
formal Laurent series \(F(t)=\sum_m f_mt^m\), write
\[
[t^n]F(t):=f_n.
\]
Yun's local zeta factor is
\begin{equation}\label{eq:yun-counting-series}
J_R(s)=\sum_{n\ge0}b_R(n)t^n,
\end{equation}
where
\[
b_R(n):=\#\{I\subseteq R^\vee:I\text{ is an }R\text{-submodule and }
[R^\vee:I]=q^n\}.
\]
Set
\[
C_R(t):=h_R(s)\zeta_R(s).
\]
For an overorder \(T\supseteq R\), let
\[
\delta(T/R):=\ell_{\Z_q}(T/R)=\delta(R)-\delta(T),
\qquad [T:R]=q^{\delta(T/R)}.
\]
Then the local overorder series is
\begin{equation}\label{eq:local-overorder-t-series}
\widetilde L_R(t):=
\sum_{R\subseteq T\subseteq\CO_E}
h_T(s)\zeta_T(s)[T:R]^{1-2s}
=\sum_{R\subseteq T\subseteq\CO_E}
q^{\delta(T/R)}t^{2\delta(T/R)}C_T(t).
\end{equation}
The two local notations are related by
\begin{equation}\label{eq:local-overorder-normalization}
\widetilde L_q(s,R)
=t^{-\delta(R)}\zeta_E(s)^{-1}\widetilde L_R(t).
\end{equation}

\section{Cubic orders and multiplicator rings}
\label{sec:yun-exact-decomposition}

\subsection{Binary cubic parametrization}
\label{subsec: parametrize-cubic}
We first recall the Delone--Faddeev parametrization.  Given
\(g\in\GL(2,\Q)\) and a binary cubic form
\[
f(X,Y)=aX^3+bX^2Y+cXY^2+dY^3,
\]
we use the action
\[
f^g(X,Y)=\det(g)f((X,Y)g^{-1}).
\]
Its discriminant
\[
\Delta(f)=b^2c^2-4ac^3-4b^3d-27a^2d^2+18abcd
\]
satisfies \(\Delta(f^g)=\det(g)^{-2}\Delta(f)\).
For a cubic
\(\Phi(T)=a_3T^3+a_2T^2+a_1T+a_0\), we write
\(\Delta(\Phi)\) for the discriminant of
\(a_3X^3+a_2X^2Y+a_1XY^2+a_0Y^3\); this is the usual polynomial
discriminant of \(\Phi\).

\begin{proposition}[Delone--Faddeev; {\cite[Proposition~4.2]{GGS}}]
\label{prop:GGS-correspondence}
There is a bijection between the \(\GL(2,\Z)\)-orbits of integral binary
cubic forms and the isomorphism classes of cubic rings.
\end{proposition}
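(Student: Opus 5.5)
We would follow the classical construction of the correspondence, in the form of Gan--Gross--Savin, making both directions explicit and then checking equivariance.

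\emph{From rings to forms.} Let \(R\) be a cubic ring, i.e.\ a commutative ring free of rank \(3\) as a \(\Z\)-module. First we check that \(\Z\cdot 1\) is a direct summand of \(R\). Then we choose a basis \(1,\omega,\theta\) of \(R\). Translating \(\omega\) and \(\theta\) by integers, we may normalize the basis so that \(\omega\theta\in\Z\). The normalized basis is unique once the images of \(\omega,\theta\) in \(R/\Z\) are fixed. In such a basis the multiplication table is forced to have the shape
\[
\omega\theta=-ad,\qquad \omega^2=-ac+b\omega-a\theta,\qquad \theta^2=-bd+d\omega-c\theta
\]
for integers \(a,b,c,d\). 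We would derive this from associativity and commutativity: the coefficients of \(\omega\) and \(\theta\) in \(\omega^2\), \(\theta^2\) and \(\omega\theta\) are constrained, and the integral constants are then determined by the equations \((\omega\theta)\omega=\omega(\omega\theta)\) and \((\omega\theta)\theta=\theta(\omega\theta)\). To this basis we attach the form \(f=aX^3+bX^2Y+cXY^2+dY^3\).

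For a basis-free description, \(f\) is the cubic map
\[
R/\Z\to\wedge^2(R/\Z),\qquad x\mapsto x\wedge x^2,
\]
after identifying \(\wedge^2(R/\Z)\cong\Z\). We use this description to prove \(\GL(2,\Z)\)-equivariance. Changing the basis of \(R/\Z\) by \(g\) substitutes the variables in \(f\). The identification of \(\wedge^2\) with \(\Z\) changes by \(\det(g)\). Together these give exactly the twisted action \(f^g(X,Y)=\det(g)f((X,Y)g^{-1})\) used in Section~\ref{subsec: parametrize-cubic}. We must check that the determinant twist and the inverse transpose match that convention, possibly after replacing \(g\) by its transpose. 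Hence isomorphic rings give \(\GL(2,\Z)\)-equivalent forms, and the map is well defined on orbits.

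\emph{From forms to rings.} Conversely, given \((a,b,c,d)\), define \(R_f=\Z\oplus\Z\omega\oplus\Z\theta\) with the multiplication table above. We verify directly that this product is commutative and associative; this is a finite polynomial identity check. Applying the construction of the first step to \(R_f\) returns \(f\). Applying the second construction to the form obtained from \(R\) recovers \(R\) up to isomorphism, because the normalized multiplication table is determined by \((a,b,c,d)\). Finally, any \(\GL(2,\Z)\)-equivalence of forms lifts to a change of normalized basis, and hence to a ring isomorphism. The two maps are therefore mutually inverse on orbits and isomorphism classes.

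The main obstacle is the structural lemma in the first step: showing that \emph{every} cubic ring, including degenerate ones such as \(\Z\oplus\Z\epsilon\oplus\Z\epsilon'\) with all products zero, admits a normalized basis with exactly this table. We would settle it by writing a general table with unknown structure constants and solving the associativity equations explicitly. The sign and transpose conventions in the equivariance are secondary bookkeeping. As a consistency check, we would also verify that \(\operatorname{disc}(R_f)=\Delta(f)\), which matches \(\Delta(f^g)=\det(g)^{-2}\Delta(f)\).
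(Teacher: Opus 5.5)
Your outline is the classical Delone--Faddeev/Gan--Gross--Savin argument, and it is correct. You normalize a basis so that $\omega\theta\in\Z$, read off the forced table from associativity, get equivariance from the intrinsic cubic map $x\mapsto x\wedge x^2$, and check associativity of the table for the converse. The paper gives no proof of its own: it cites Gan--Gross--Savin, Proposition~4.2, and recalls the good-basis multiplication table, so your route is the same one.

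Two small corrections. The relations that actually determine the constants are $(\omega^2)\theta=\omega(\omega\theta)$ and $(\theta^2)\omega=\theta(\theta\omega)$; the identities $(\omega\theta)\omega=\omega(\omega\theta)$ that you wrote hold trivially by commutativity. Also, one computes $x\wedge x^2=-f(X,Y)\,\omega\wedge\theta$, but this sign is harmless on orbits because $f^{-I}=-f$ under the paper's action.
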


Concretely, let \(R=\Z+\Z\alpha+\Z\beta\) and choose a good basis
\(\{1,\alpha,\beta\}\), meaning that \(\alpha\beta\in\Z\).  Such a basis
always exists \cite{DeloneFaddeev}.  The multiplication table has the
form
\begin{equation}\label{eq:delone-faddeev-multiplication-table}
\alpha^2=-ac+b\alpha-a\beta,\qquad
\alpha\beta=-ad,\qquad
\beta^2=-bd+d\alpha-c\beta,
\end{equation}
and the associated binary cubic form is
\[
f(X,Y)=aX^3+bX^2Y+cXY^2+dY^3.
\]

\begin{remark}\label{rem:functoriality-parametrization}
As observed by Deligne, the parametrization is functorial in the base
scheme \cite[p.~10]{GGS}.  In particular, replacing \(\Z\) by
\(\Z_q\) gives the corresponding parametrization of cubic
\(\Z_q\)-orders.
\end{remark}

\begin{definition}\label{def:primitive-form}
An integral binary cubic form
\(f(X,Y)=aX^3+bX^2Y+cXY^2+dY^3\) is \emph{primitive} if
\(\gcd(a,b,c,d)=1\).  A binary cubic over \(\Z_q\) is primitive if at
least one of its coefficients is a unit.
\end{definition}

\begin{proposition}[{\cite[Proposition~5.2]{GGS}}]
\label{prop:criterion-gorenstein}
An integral binary cubic form is primitive if and only if its associated
cubic ring is Gorenstein.  The same equivalence holds over \(\Z_q\).
\end{proposition}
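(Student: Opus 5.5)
The statement immediately above is Proposition~\ref{prop:criterion-gorenstein}: a binary cubic form is primitive if and only if its cubic ring is Gorenstein. I would prove it directly from the Delone--Faddeev multiplication table \eqref{eq:delone-faddeev-multiplication-table}. It suffices to work locally, so I take the base ring to be \(\Z_q\) and a cubic ring \(R=\Z_q+\Z_q\alpha+\Z_q\beta\) with a good basis. Over \(\Z\), primitivity and the Gorenstein property are both local at each prime: \(\gcd(a,b,c,d)=1\) exactly when the form is primitive over every \(\Z_q\), and by Remark~\ref{rem:functoriality-parametrization} the parametrization commutes with completion. A finite local \(\Z_q\)-algebra is Gorenstein precisely when its dual \(R^\vee\) is an invertible, equivalently locally cyclic, \(R\)-module. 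The plan is therefore to show that \(R^\vee\) fails to be locally free of rank one exactly when \(q\mid a,b,c,d\).

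\textbf{Non-primitive implies non-Gorenstein.} Suppose \(f=qg\). Then \(R=\Z_q+qR'\), where \(R'\) is the ring attached to \(g\). This follows by rescaling the basis: replacing \(\alpha,\beta\) by \(q\alpha,q\beta\) multiplies the form by \(q\), since \(f^{g}\) with \(g=q\cdot\mathrm{id}\) has \(\det(g)/q^3\) scaling. So \(R\) contains the conductor-type ideal \(\mathfrak m=q\Z_q+q\alpha'\Z_q+q\beta'\Z_q\), and \(\mathfrak m/q\mathfrak m\)-type considerations apply. More concretely, \(\mathfrak m=(q,q\alpha',q\beta')\) is the maximal ideal containing \(q\), and \(R/qR\cong\mathbb F_q[x,y]/(x,y)^2\). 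This ring has two-dimensional socle, so it is not Gorenstein. Since \(q\) is a nonzerodivisor, \(R\) is not Gorenstein either.

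\textbf{Primitive implies Gorenstein.} Because \(q\) is a nonzerodivisor, it is enough to show that \(\bar R=R/qR\) is Gorenstein, i.e.\ has one-dimensional socle at each maximal ideal. I act by \(\GL(2,\Z_q)\), which changes the basis but not the ring. Since \(\bar f\neq 0\), I can move \(\bar f\) into one of the normal forms \(X^3\), \(X^2Y\), \(XY(X+Y)\), or an irreducible/partially split form with unit leading coefficient. Acting by a unimodular matrix chosen via a point where \(\bar f\) does not vanish (after a finite extension if necessary, which preserves the Gorenstein property) I make \(a\) a unit. Then \(\alpha\beta=-ad\) and \(\alpha^2=-ac+b\alpha-a\beta\) show that \(\beta\in\Z_q[\alpha]\). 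Hence \(R=\Z_q[\alpha]\) is monogenic, therefore a complete intersection \(\Z_q[T]/(\Phi)\), and complete intersections are Gorenstein.

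The delicate point is the small-residue-field case: when \(q\le 2\), \(\bar f\) may vanish at every \(\mathbb F_q\)-point, e.g.\ \(XY(X+Y)\) over \(\mathbb F_2\). Here I would pass to an unramified extension \(\Z_{q^r}\), where a nonvanishing point exists. Gorenstein-ness descends along faithfully flat base change, so the conclusion returns to \(\Z_q\).
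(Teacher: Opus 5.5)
Your argument is correct. It is also a genuinely different route, because the paper gives no proof of this statement: it imports the equivalence from \cite[Proposition~5.2]{GGS}. Your proof is self-contained and local, and its two halves are mechanisms the paper uses elsewhere.

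Your non-primitive direction is the socle-plus-regular-element argument of case (c) of Proposition~\ref{prop:first-overorder-single-residue}. You can skip the detour through \(R=\Z_q+qR'\). As written, its justification uses the inverse of the matrix you want: with the paper's action, \(g=q\cdot\mathrm{id}\) divides the basis by \(q\) and turns \(f\) into \(q^{-1}f\). Reducing \eqref{eq:delone-faddeev-multiplication-table} modulo \(q\) already gives \(\bar\alpha^2=\bar\alpha\bar\beta=\bar\beta^2=0\) when \(q\mid a,b,c,d\).

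Your primitive direction is the index-form argument of Proposition~\ref{prop:gorenstein-cubic-monogenic}. A point of \(\mathbb P^1\) with \(\bar f(\bar r,\bar s)\neq0\) makes \(R\) monogenic, hence the hypersurface \(\Z_q[T]/(\Phi)\), hence Gorenstein.

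What your route buys is independence from GGS. It also makes Proposition~\ref{prop:gorenstein-cubic-monogenic} self-contained: that proof uses only the Gorenstein\(\Rightarrow\)primitive direction, which your socle argument supplies without any appeal to monogenicity. What the citation buys is brevity.

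A few cosmetic points:
\begin{itemize}
\item The announced plan via invertibility of \(R^\vee\) is never used.
\item The list of ``normal forms'' plays no role. All you need is a point of \(\mathbb P^1(\mathbb F_q)\) where \(\bar f\) does not vanish, and one exists as soon as \(q+1>3\).
\item For \(q=2\), your descent along the unramified quadratic extension of \(\Z_2\) is valid, since flat local homomorphisms reflect the Gorenstein property. It is simpler, as in Proposition~\ref{prop:gorenstein-cubic-monogenic}, to note that \(\bar f=XY(X+Y)\) forces \(R/2R\simeq\mathbb F_2^3\), which is reduced and hence Gorenstein.
\end{itemize}
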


\subsection{Gorenstein and monogenic cubic orders}
\label{subsec:gorenstein-monogenic}

The binary-cubic parametrization also makes the distinction between
Gorenstein and monogenic cubic orders explicit.

\begin{proposition}
\label{prop:gorenstein-cubic-monogenic}
Let \(R\) be a cubic \(\Z_q\)-order.
\begin{enumerate}[label=\textup{(\roman*)}, leftmargin=2em]
\item If \(q>2\) and \(R\) is Gorenstein, then \(R\) is monogenic.
\item The only Gorenstein cubic \(\Z_2\)-order that is not monogenic is
\(\Z_2^3\).
\end{enumerate}
\end{proposition}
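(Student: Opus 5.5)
We will work entirely through the Delone--Faddeev parametrization over \(\Z_q\) (Proposition~\ref{prop:GGS-correspondence} and Remark~\ref{rem:functoriality-parametrization}), combined with the Gorenstein criterion of Proposition~\ref{prop:criterion-gorenstein}. The basic observation is that a cubic ring \(R\) with form \(f\) is monogenic exactly when \(f\) represents a unit. If \(f(x,y)=u\in\Z_q^\times\) with \((x,y)\) primitive, we complete \((x,y)\) to a matrix \(g\in\GL(2,\Z_q)\) so that \(f^g\) has leading coefficient \(a\) a unit. From the multiplication table \eqref{eq:delone-faddeev-multiplication-table}, the equation \(\alpha^2=-ac+b\alpha-a\beta\) shows \(\beta\in\Z_q[\alpha]\), so \(R=\Z_q[\alpha]\). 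Conversely, if \(R=\Z_q[\theta]\) with minimal polynomial \(T^3+a_2T^2+a_1T+a_0\), then the basis \(1,\theta,\theta^2\) adjusted to a good basis gives a form with leading coefficient \(1\), namely \(X^3+a_2X^2Y+a_1XY^2+a_0Y^3\) up to \(\GL(2)\). We therefore reduce to the purely arithmetic question: \emph{when does a primitive binary cubic form over \(\Z_q\) fail to represent a unit?}

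The next step is to reduce modulo \(q\). Since \(f\) is primitive, \(\bar f\in\mathbb F_q[X,Y]\) is a nonzero cubic form. The form \(f\) represents a unit iff \(\bar f\) has a nonzero value at some point of \(\mathbb F_q^2\), and it suffices to check points of \(\mathbb P^1(\mathbb F_q)\). A nonzero binary form of degree \(3\) has at most \(3\) projective zeros, while \(\#\mathbb P^1(\mathbb F_q)=q+1\). For \(q\ge3\) we have \(q+1\ge4>3\), so some projective point is a nonzero value, giving (i).

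For \(q=2\), \(\mathbb P^1(\mathbb F_2)\) has three points, and \(\bar f\) vanishes on all of them iff \(\bar f\) is a nonzero scalar multiple of \(XY(X+Y)=X^2Y+XY^2\); this is the only cubic form vanishing at \([1:0],[0:1],[1:1]\). Thus a non-monogenic Gorenstein \(R\) has \(f\equiv X^2Y+XY^2\pmod 2\). The remaining task, which we expect to be the main obstacle, is to show that every such lift gives \(\Z_2^3\). Such an \(f\) has discriminant congruent to \(b^2c^2=1\) mod \(2\), hence a unit, so \(R\) is étale over \(\Z_2\). Moreover \(R/2R\) corresponds to \(XY(X+Y)\) over \(\mathbb F_2\), i.e. \(\mathbb F_2^3\), which is totally split. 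An étale \(\Z_2\)-algebra is determined by its special fiber via Hensel lifting of idempotents, so \(R\cong\Z_2^3\).

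Finally we check that \(\Z_2^3\) is indeed Gorenstein and not monogenic. It is Gorenstein because it is maximal, or because its form \(XY(X+Y)\) is primitive. It is not monogenic because \(\mathbb F_2^3\) is not generated by one element over \(\mathbb F_2\): a monogenic algebra \(\mathbb F_2[T]/(h)\) with \(\deg h=3\) would require three distinct roots in \(\mathbb F_2\), which do not exist. This completes (ii).
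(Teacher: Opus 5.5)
Your proof is correct and essentially the paper's: unit values of the Delone--Faddeev form give generators (the paper gets this from $\det(1,\gamma,\gamma^2)=-f(r,s)$, where you use a $\GL(2,\Z_q)$ change of basis), the count $q+1>3$ on $\mathbb P^1(\mathbb F_q)$ settles $q>2$, and for $q=2$ the form $XY(X+Y)$ forces $R/2R\simeq\mathbb F_2^3$, whose idempotents lift to give $\Z_2^3$. Your discriminant/\'etaleness step is harmless but unnecessary, since, as in the paper, lifting the residual idempotents over $\Z_2$ already gives the product decomposition.
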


\begin{proof}
Choose a good basis \(1,\alpha,\beta\) of \(R\), and let
\[
f(X,Y)=aX^3+bX^2Y+cXY^2+dY^3
\]
be its associated binary cubic.  For \(u,r,s\in\Z_q\), put
\(\gamma=u+r\alpha+s\beta\).  Elementary column operations, followed by
the multiplication table
\eqref{eq:delone-faddeev-multiplication-table}, give (compare
\cite[Lemma~5.3]{GGS})
\[
\begin{aligned}
\det_{\{1,\alpha,\beta\}}(1,\gamma,\gamma^2)
&=
\det
\begin{pmatrix}
1&0&*\\
0&r&br^2+ds^2\\
0&s&-ar^2-cs^2
\end{pmatrix}\\
&=-f(r,s).
\end{aligned}
\]
Consequently,
\begin{equation}\label{eq:monogenic-unit-index-form}
R=\Z_q[\gamma]
\quad\Longleftrightarrow\quad
f(r,s)\in\Z_q^\times .
\end{equation}

Suppose that \(R\) is Gorenstein.  By
Proposition~\ref{prop:criterion-gorenstein}, \(f\) is primitive, so its
reduction \(\bar f\) is a nonzero homogeneous cubic over \(\mathbb F_q\).
If \(q>2\), then
\(\#\mathbb P^1(\mathbb F_q)=q+1>3\), whereas a nonzero binary cubic has
at most three projective zeros.  Thus
\(\bar f(\bar r,\bar s)\ne0\) for some
\([\bar r:\bar s]\in\mathbb P^1(\mathbb F_q)\).  Lifting
\(\bar r,\bar s\) to \(\Z_q\) and using
\eqref{eq:monogenic-unit-index-form} proves \textup{(i)}.

Now let \(q=2\).  If \(\bar f\) does not vanish on all of
\(\mathbb P^1(\mathbb F_2)\), the same argument proves that \(R\) is
monogenic.  Otherwise, evaluating \(\bar f\) at
\([1:0]\), \([0:1]\), and \([1:1]\) gives
\[
\bar a=\bar d=0,\qquad \bar b=\bar c.
\]
Since \(\bar f\ne0\), it follows that
\[
\bar f(X,Y)=X^2Y+XY^2=XY(X+Y).
\]
Reducing \eqref{eq:delone-faddeev-multiplication-table} modulo \(2\)
then gives
\[
\bar\alpha^2=\bar\alpha,\qquad
\bar\alpha\bar\beta=0,\qquad
\bar\beta^2=\bar\beta.
\]
The three orthogonal idempotents
\(\bar\alpha,\bar\beta,1-\bar\alpha-\bar\beta\) therefore identify
\[
R/2R\simeq\mathbb F_2^3.
\]
By Hensel's lemma, these idempotents lift uniquely to \(R\), since the
derivative of \(T^2-T\) is a unit at every residual idempotent.  Their
lifts are again orthogonal and sum to \(1\), and hence give a product
decomposition of \(R\).  Each factor is a direct summand whose reduction
has dimension one over \(\mathbb F_2\), so it is free of rank one over
\(\Z_2\).  Therefore
\[
R\simeq\Z_2^3.
\]

Conversely, \(\Z_2^3\) is Gorenstein, being a product of discrete
valuation rings, but it is not monogenic.  Indeed, every element
\(\bar\gamma\in\mathbb F_2^3\) satisfies
\(\bar\gamma^2=\bar\gamma\), so the subalgebra
\(\mathbb F_2[\bar\gamma]\) has dimension at most \(2\).  Thus
\(\mathbb F_2^3\), and consequently \(\Z_2^3\), cannot be generated by
one element.
\end{proof}

\subsection{Decomposition by multiplicator ring}

We record the exact local decomposition that follows from Yun's
ideal-class formula \cite[Lemma~2.10]{ZYun}.  Let \(A\) be a \(\Z_q\)-order in an \'etale
\(\Q_q\)-algebra \(E\).
Following Yun \cite[\S2.2]{ZYun}, choose an \(\CO_E\)-generator
\(\mathfrak c\) of the different \(\mathfrak d_{E/\Q_q}\).  We realize all
local duals inside \(E\) using the modified trace pairing
\[
\langle x,y\rangle_E
:=\operatorname{Tr}_{E/\Q_q}(\mathfrak c^{-1}xy),
\qquad
T^\vee=\{x\in E:\langle x,T\rangle_E\subseteq\Z_q\}.
\]
In particular, \(\CO_E^\vee=\CO_E\).

\begin{lemma}\label{lem:multiplicator-endomorphism}
For a fractional \(A\)-ideal \(M\subset E\), let
\[
(M:M):=\{x\in E:xM\subseteq M\}.
\]
Multiplication induces a canonical isomorphism of rings
\[
(M:M)\xrightarrow{\ \sim\ }\End_A(M),
\qquad x\longmapsto(m\longmapsto xm).
\]
Moreover, \((M:M)\) is an overorder of \(A\) in \(\CO_E\), called the
\emph{multiplicator ring} of \(M\).
\end{lemma}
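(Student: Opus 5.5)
The plan is to check the statement in three steps, using only that \(M\) is a fractional \(A\)-ideal, i.e.\ a finitely generated \(A\)-submodule of \(E\) with \(M\otimes_{\Z_q}\Q_q=E\).

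\emph{The map is a ring isomorphism.} The map \(x\mapsto(m\mapsto xm)\) is clearly a ring homomorphism \((M:M)\to\End_A(M)\), since \(E\) is commutative and so multiplication by \(x\) is \(A\)-linear.

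Injectivity is immediate. \(M\) contains a unit \(u\) of \(E\): indeed \(M\) spans \(E\) over \(\Q_q\), so some scaled element of \(M\) is a unit of \(E\). Hence \(xM=0\) forces \(xu=0\), so \(x=0\).

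For surjectivity, let \(\varphi\in\End_A(M)\). Extend it \(\Q_q\)-linearly to \(\varphi_E\colon E\to E\). This extension is \(E\)-linear, because \(A\otimes\Q_q=E\) and \(\varphi\) is \(A\)-linear. Thus \(\varphi_E\) is multiplication by \(x:=\varphi_E(1)\), and \(xM=\varphi(M)\subseteq M\). So \(x\in(M:M)\) maps to \(\varphi\).

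\emph{\((M:M)\) is an order containing \(A\).} It is visibly a subring of \(E\) containing \(A\). It is a lattice: choose \(u\in M\) a unit and \(N\ge 0\) with \(q^N\CO_E\subseteq M\) (possible since \(M\) is a full lattice). Then \(x\in(M:M)\) gives \(xq^N\CO_E\subseteq M\), so \(x\in q^{-N}M\). Hence \((M:M)\) is contained in a finitely generated \(\Z_q\)-module, so it is finitely generated. It has full rank because it contains \(A\).

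\emph{\((M:M)\subseteq\CO_E\).} This is the only step with content. Every element of an order is integral over \(\Z_q\): \(x\) preserves the finitely generated faithful \(\Z_q\)-module \(M\), so by the Cayley--Hamilton (determinant) trick \(x\) satisfies a monic polynomial over \(\Z_q\). Since \(\CO_E\) is the integral closure of \(\Z_q\) in the étale algebra \(E\), we get \(x\in\CO_E\).

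Overall the argument is routine; the only point needing care is the integrality step.
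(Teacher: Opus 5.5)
Your proof is correct and follows essentially the same route as the paper. Injectivity holds because \(M\) spans \(E\). Surjectivity comes from extending an \(A\)-linear endomorphism \(\Q_q\)-linearly to an \(E\)-linear map of \(E\). Integrality follows from the characteristic-polynomial (determinant) trick applied to multiplication by \(x\) on \(M\).

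The differences are cosmetic. You bound \((M:M)\subseteq q^{-N}M\), whereas the paper embeds \((M:M)\) in \(\End_{\Z_q}(M)\). Also, the determinant trick needs \(M\) to be faithful over \(\Z_q[x]\), not merely over \(\Z_q\); this holds because \(M\) contains the unit \(q^N\) of \(E\).
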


\begin{proof}
The homomorphism is injective because \(M\) spans \(E\) over
\(\Q_q\).  Conversely, an \(A\)-linear endomorphism of \(M\) extends over
\(\Q_q\) to an \(E\)-linear endomorphism of
\(M\otimes_{\Z_q}\Q_q\simeq E\), and hence is multiplication by a unique
element \(x\in E\).  Since it preserves \(M\), one has \(xM\subseteq M\),
so \(x\in(M:M)\).

Clearly \(A\subseteq(M:M)\).  Every \(x\in(M:M)\) acts on the finite free
\(\Z_q\)-module \(M\), so its characteristic polynomial shows that \(x\)
is integral over \(\Z_q\). Hence \(x\in\CO_E\).  Finally, \((M:M)\) embeds
in the finite \(\Z_q\)-module \(\End_{\Z_q}(M)\) and contains \(A\), so it
is an order between \(A\) and \(\CO_E\).
\end{proof}

For an overorder \(S\supseteq A\), put
\[
\Cl^{\circ}(S)
:=
E^\times\backslash
\{\,M\subset E:\ M \text{ is an } S\text{-fractional ideal and }
(M:M)=S\,\}.
\]
For \([M]\in\Cl^{\circ}(S)\), let \(J_S(s,[M])\) be Yun's summand
attached to the class \([M]\)(\cite[\S 2.9]{ZYun}).

\begin{proposition}\label{prop:yun-exact-decomposition}
For every local order \(A\),
\[
J_A(s)
=
\sum_{A\subseteq S\subseteq \CO_E}
[S:A]^{-s}
\sum_{[M]\in\Cl^{\circ}(S)} J_S(s,[M]).
\]
\end{proposition}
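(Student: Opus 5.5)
We prove Proposition~\ref{prop:yun-exact-decomposition} by partitioning the finite-index \(A\)-submodules of \(A^\vee\) according to their multiplicator rings, and then matching each block with Yun's classwise summands through his ideal-class formula \cite[Lemma~2.10]{ZYun}.

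\emph{Step 1: the partition.} Every finite-index \(A\)-submodule \(M\subseteq A^\vee\) is a fractional \(A\)-ideal of full rank in \(E\). By Lemma~\ref{lem:multiplicator-endomorphism}, \(S:=(M:M)\) is an overorder with \(A\subseteq S\subseteq\CO_E\), and \(M\) is an \(S\)-fractional ideal whose multiplicator ring is exactly \(S\). Hence
\[
J_A(s)=\sum_{A\subseteq S\subseteq\CO_E}\ \sum_{[M]\in\Cl^\circ(S)}\ \sum_{\substack{M'\subseteq A^\vee,\ M'\in[M]}}[A^\vee:M']^{-s}.
\]
Here we have grouped the submodules with \((M':M')=S\) further by their \(E^\times\)-homothety class in \(\Cl^\circ(S)\). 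This regrouping is legitimate because homotheties preserve multiplicator rings.

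\emph{Step 2: identifying the inner sum.} We use the definition of \(J_S(s,[M])\) in \cite[\S2.9]{ZYun}. It is the sum of \([S^\vee:M']^{-s}\) over \(M'\in[M]\) with \(M'\subseteq S^\vee\), up to Yun's normalization. We then need two facts.

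The first is that for an \(S\)-module \(M'\), the condition \(M'\subseteq A^\vee\) is equivalent to \(M'\subseteq S^\vee\). The inclusion \(S^\vee\subseteq A^\vee\) is clear. Conversely, if \(M'\subseteq A^\vee\) and \(SM'=M'\), then \(\langle M',S\rangle=\langle SM',A\rangle\subseteq\langle M',A\rangle\subseteq\Z_q\), so \(M'\subseteq S^\vee\).

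The second is the index relation \([A^\vee:M']=[A^\vee:S^\vee][S^\vee:M']\). Duality gives \([A^\vee:S^\vee]=[S:A]\), which produces the factor \([S:A]^{-s}\).

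Substituting both facts into the triple sum of Step 1 yields the stated formula.

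\emph{Main obstacle.} The only delicate point is checking that Yun's summand \(J_S(s,[M])\) is normalized exactly as the sum over \(M'\in[M]\) with \(M'\subseteq S^\vee\), weighted by \([S^\vee:M']^{-s}\). In particular, we must confirm that the duals are taken with respect to the same modified pairing \(\langle\cdot,\cdot\rangle_E\), so that the factor \(\mathfrak c\) introduces no extra index. If Yun's normalization instead counts lattices inside \(S\) after a twist by \(\mathfrak c\), we first apply the bijection \(M'\mapsto\mathfrak c M'\) and recompute the index, which changes it only by a quantity independent of \(M'\). In that case we compare the result directly with \cite[Lemma~2.10]{ZYun}, which states precisely this class-by-class decomposition.
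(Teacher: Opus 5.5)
Your proof is correct and follows the same route as the paper: you partition the lattices by multiplicator ring, then compare the class summands for \(A\) and for \(S\), and the only change is the index factor \([A^\vee:S^\vee]=[S:A]\). Your explicit check that an \(S\)-stable lattice lies in \(A^\vee\) if and only if it lies in \(S^\vee\) is what the paper expresses by saying that \(\Aut_A(M)=\Aut_S(M)\) and that the integral over \(E^\times\) involving \(M^\vee\) is the same for both ambient orders.
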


\begin{proof}
Every fractional \(A\)-ideal \(M\) has a multiplicator ring \((M:M)\),
which is an overorder \(S\) with \(A\subseteq S\subseteq\CO_E\).
Hence Yun's ideal classes for \(A\) decompose disjointly according to this
 multiplicator ring.

Fix such an \(S\) and a class \([M]\in\Cl^{\circ}(S)\), and scale
the representative so that \(M\subseteq S^\vee\).  The
automorphism group in Yun's summand is intrinsic to \(M\):
\[
\Aut_A(M)=\Aut_S(M)=S^\times,
\]
and the integral over \(E^\times\) involving \(M^\vee\) is the same for
the ambient orders \(A\) and \(S\).  The only change is the index factor.
Since duality reverses inclusions and
\[
[A^\vee:S^\vee]=[S:A],
\]
we have
\[
[A^\vee:M]=[A^\vee:S^\vee][S^\vee:M]=[S:A][S^\vee:M].
\]
Therefore
\[
J_A(s,[M])=[S:A]^{-s}J_S(s,[M]).
\]
Summing over all overorders $S$  and all classes in $\Cl^{\circ}(S)$ gives the formula.
\end{proof}

\section{The triple-root case}\label{sec:remaining-single-residue}

The purpose of this section is to isolate the only local case that
requires recursion.  The comparison is immediate for a maximal order and
follows from the quadratic cases when the defining cubic has a simple
root modulo \(q\).  We then analyze the remaining triple-root case, determine
its unique index-\(q\) overorder, and identify the square-zero special fiber
that underlies the arguments in the following sections.

For \(F\in \Z_q[X]\), we  denote by \(\overline F\) its reduction in \(\mathbb F_q[X]\).
\begin{proposition}
\label{prop:local-base-cases}
Let \(A=\Z_q[\alpha]\simeq\Z_q[T]/(F)\) be a monogenic cubic order in an
\'etale cubic \(\Q_q\)-algebra \(E\).  The local identity
\[
J_A(s)=\widetilde L_A(t)
\]
holds if \(A=\CO_E\), or if \(\overline F\) has a simple
\(\mathbb F_q\)-root.
\end{proposition}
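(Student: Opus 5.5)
The plan is to treat the two cases separately. In each case the comparison reduces to known formulas once we use the fact that both sides are multiplicative under product decompositions of the order.

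\emph{Maximal case.} Suppose \(A=\CO_E\). With the modified trace pairing of Section~\ref{sec:yun-exact-decomposition} we have \(\CO_E^\vee=\CO_E\), so the finite-index \(A\)-submodules of \(A^\vee\) are exactly the nonzero ideals of \(\CO_E\). Hence \(J_A(s)=\zeta_E(s)\), which is the product over the factors \(E_i\) of the local Dedekind factors. On the other side, the sum \eqref{eq:local-overorder-t-series} has the single term \(T=\CO_E\), with \(\delta(T/A)=0\). Since \(\CO_E\) is a product of DVRs, it is Gorenstein, so \(h_{\CO_E}=1\). Moreover \(\zeta_{\CO_E}(s)=\prod_i(1-q^{-n_i s})^{-1}=\zeta_E(s)\), because the maximal ideals of \(\CO_E\) are the maximal ideals of the factors, with residue fields \(\mathbb F_{q^{n_i}}\). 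Thus \(\widetilde L_A(t)=\zeta_E(s)=J_A(s)\).

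\emph{Simple-root case.} Let \(\bar r\) be a simple root of \(\overline F\). By Hensel's lemma, \(F=(T-r)G\) in \(\Z_q[T]\) with \(G\) monic quadratic and \(\overline G(\bar r)\neq0\). Then \((T-r)\) and \((G)\) are comaximal in \(\Z_q[T]\), so the Chinese remainder theorem gives
\[
A\simeq \Z_q\times B,\qquad B:=\Z_q[T]/(G),\qquad E\simeq\Q_q\times E'.
\]
I then show that both sides factor accordingly.

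For Yun's side, the dual of a product is the product of the duals. Any \(A\)-submodule of \(A^\vee\) is stable under the idempotents, so it is a product of submodules of the factors, and indices multiply. This gives \(J_A(s)=J_{\Z_q}(s)J_B(s)=\zeta_{\Q_q}(s)J_B(s)\).

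For the overorder side, every overorder \(T\) of \(A\) inside \(\CO_E=\Z_q\times\CO_{E'}\) contains the idempotents of \(A\), so \(T=\Z_q\times T'\) with \(B\subseteq T'\subseteq\CO_{E'}\). Conversely every such \(T'\) gives an overorder. Along this bijection the following hold:
\begin{itemize}
\item \([T:A]=[T':B]\);
\item \(\zeta_T=\zeta_{\Q_q}\zeta_{T'}\), since the maximal ideals of \(T\) are those of the factors;
\item \(T\) is Gorenstein if and only if \(T'\) is, so \(h_T=h_{T'}\).
\end{itemize}
For the Gorenstein statement I would use that Cohen--Macaulay type is computed factorwise, or alternatively Proposition~\ref{prop:criterion-gorenstein} together with the fact that quadratic orders are always Gorenstein, so \(h_{T'}=1\) in any case. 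Therefore \(\widetilde L_A(t)=\zeta_{\Q_q}(s)\widetilde L_B(t)\), and the cubic identity reduces to the quadratic identity \(J_B(s)=\widetilde L_B(t)\).

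For the quadratic identity, \(B\) is Gorenstein, and its overorders form the chain \(\Z_q+q^k\CO_{E'}\) for \(0\le k\le\delta(B)\), each with \(h=1\). I will invoke the explicit local formulas of \cite[Proposition~3.10 and Theorem~5.4]{Malors}, which compute both \(J_B\) and the chain sum. If needed, I would verify the identity directly by induction on \(\delta(B)\), peeling off the term for \(B\) itself. I expect the main obstacle to be bookkeeping rather than conceptual. One must check that the normalization of \eqref{eq:local-overorder-t-series} (the factor \(q^{\delta}t^{2\delta}\) and \(\zeta_{T'}\) taken over residue fields of \(T'\)) matches the normalization used in \cite{Malors}. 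One must also check that Yun's counting with the modified pairing \(\mathfrak c^{-1}\) agrees with counting submodules of the ordinary dual, which holds because scaling by \(\mathfrak c\) preserves indices of submodules.
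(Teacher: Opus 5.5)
Your proposal is correct and follows the paper's proof essentially step for step. The maximal case goes through \(\CO_E^\vee=\CO_E\). The simple-root case goes through the Hensel/CRT splitting \(A\simeq\Z_q\times B\), the factorization of both sides by the idempotents (with \(h_T=1\) because quadratic orders are Gorenstein), and a reduction to the quadratic comparison via \cite[Proposition~3.10 and Theorem~5.4]{Malors}. The paper makes two further points explicit. First, since \(B\) is Gorenstein, \(B^\vee\) is principal, so \(J_B\) is exactly Malors' ideal-counting series \(\zeta_n(s)\). Second, the residue-field sum over the chain \(\CO_0,\dots,\CO_n\), treated separately in the ramified, unramified and split cases, matches the expressions \(R_n/(1-t)\), \(U_n/(1-t^2)\) and \(S_n/(1-t)^2\) in that theorem.
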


\begin{proof}
If \(A=\CO_E\), then \(\CO_E^\vee=\CO_E\), and the
\(\CO_E\)-stable sublattices counted by \(J_{\CO_E}\) are precisely its
integral ideals.  Hence
\[
J_{\CO_E}(s)=\zeta_E(s).
\]
There is no strict overorder and \(h_{\CO_E}=1\), so
\(\widetilde L_{\CO_E}(t)=\zeta_E(s)\) as well.

Now suppose that \(\overline F\) has a simple root.  Hensel's lemma and
the Chinese remainder theorem give
\[
A\simeq\Z_q\times B,
\]
where \(B\) is a quadratic \(\Z_q\)-order in an \'etale quadratic
\(\Q_q\)-algebra \(K\).  The two idempotents of \(A\) split every
\(A\)-stable lattice in \(A^\vee=\Z_q\times B^\vee\), and therefore
\[
J_A(s)=\zeta_{\Z_q}(s)J_B(s).
\]
They also split every overorder of \(A\), so the overorders are exactly
\(\Z_q\times T\), with \(B\subseteq T\subseteq\CO_K\), and
\(\delta((\Z_q\times T)/A)=\delta(T/B)\).  Every quadratic order is
Gorenstein, hence
\begin{equation}\label{eq:quadratic-product-overorder-sum}
\widetilde L_A(t)=
\zeta_{\Z_q}(s)
\sum_{B\subseteq T\subseteq\CO_K}
q^{\delta(T/B)}t^{2\delta(T/B)}\zeta_T(s).
\end{equation}
We spell out the last identification, since two different zeta functions
are involved.  In \eqref{eq:quadratic-product-overorder-sum},
\(\zeta_T(s)\) is the
residue-field Euler factor defined in \eqref{eqn: residue-zeta}, which is not
the ideal-counting zeta function of \(T\).

Choose \(\Delta\in\CO_K\) so that \(1,\Delta\) is a
\(\Z_q\)-basis of \(\CO_K\).  Since \(B/\Z_q\) is a rank-one
submodule of \(\CO_K/\Z_q\), there is a unique \(n\geq 0\) such that
\[
B=\CO_n:=\Z_q[q^n\Delta]
\]
in the notation of \cite[Definition~3.3]{Malors}.  Its overorders are
precisely \(\CO_0,\ldots,\CO_n\)
\cite[Proposition~3.10 and Corollary~3.11]{Malors}.  Since
\(B\) is Gorenstein, scaling the invertible \(B\)-module \(B^\vee\) by
a generator identifies \(J_B(s)\) with the ideal-counting series
\[
J_B(s)=\sum_{I\subseteq\CO_n}[\CO_n:I]^{-s}.
\]
Here the sum is over the finite-index ideals of \(\CO_n\). This is the
series denoted by \(\zeta_n(s)\) in
\cite[Definition~2.3]{Malors}.

The residue-field factors of the orders \(\CO_i\) give directly
\begin{equation}\label{eq:quadratic-residue-factor-sum}
\sum_{i=0}^n q^{n-i}t^{2(n-i)}
\zeta_{\CO_i}(s)
=
\begin{cases}
\dfrac{\sum_{j=0}^n q^jt^{2j}}{1-t},
&K/\Q_q\text{ ramified},\\[9pt]
\dfrac{\sum_{j=0}^{n-1}q^jt^{2j}}{1-t}
 +\dfrac{q^nt^{2n}}{1-t^2},
&K/\Q_q\text{ unramified},\\[9pt]
\dfrac{\sum_{j=0}^{n-1}q^jt^{2j}}{1-t}
 +\dfrac{q^nt^{2n}}{(1-t)^2},
&K=\Q_q\times\Q_q.
\end{cases}
\end{equation}
Here a sum with upper limit \(-1\) is zero.
In the ramified case every \(\CO_i\) has residue field
\(\mathbb F_q\).  In the other two cases, \(\CO_i\) has residue field
\(\mathbb F_q\) for \(i\geq 1\), whereas \(\CO_0\) contributes
\((1-t^2)^{-1}\) in the unramified case and \((1-t)^{-2}\) in the split
case.  The three expressions in \eqref{eq:quadratic-residue-factor-sum}
are, respectively, \(R_n(t)/(1-t)\), \(U_n(t)/(1-t^2)\), and
\(S_n(t)/(1-t)^2\) in the notation of
\cite[Theorem~5.4]{Malors}.  That theorem identifies each expression
with its ideal-counting series \(\zeta_n(s)=J_B(s)\).  Therefore
\begin{equation}\label{eq:quadratic-overorder-equals-ideal-zeta}
\sum_{B\subseteq T\subseteq\CO_K}
q^{\delta(T/B)}t^{2\delta(T/B)}\zeta_T(s)=J_B(s).
\end{equation}
The term with \(T=B\) is only \(\zeta_B(s)\) in the sense of
\eqref{eqn: residue-zeta}, not \(J_B(s)\), so it does not already contain
the ideals represented by the remaining terms.  The principal-ideal
series \(\zeta_n^P(s)\) used in
\cite[Theorem~4.2]{Malors} is a third object and does not occur in
\eqref{eq:quadratic-product-overorder-sum}.  Substituting
\eqref{eq:quadratic-overorder-equals-ideal-zeta} into
\eqref{eq:quadratic-product-overorder-sum} gives
\(J_A(s)=\widetilde L_A(t)\).
\end{proof}

\begin{remark}\label{rmk:reduction-to-triple-root}
 If
\(\overline F\) is squarefree, the monogenic order
\(A\simeq\Z_q[T]/(F)\) is maximal, hence
\(A=\CO_E\).   If \(\overline F\) is not squarefree but has a simple root,
Proposition~\ref{prop:local-base-cases} applies.  Thus the only remaining
nonmaximal case is
\[
\overline F=(T-\bar a)^3.
\]
After translating the generator, we may take \(\bar a=0\).
\end{remark}

The following local form of \cite[Lemma~7]{DengEspinosaGL3PS} will be
used repeatedly.
\begin{lemma}\label{lem:overorders-by-truncated-ideals}
Let \(A\) be a \(\Z_q\)-order in an \'etale \(\Q_q\)-algebra \(E\), and
let \(r\ge0\).  For an overorder \(S\supseteq A\) with
\([S:A]=q^r\), set \(I_S:=q^rS\subseteq A\).  Then
\[
q^rA\subseteq I_S\subseteq A,\qquad
\ell_{\Z_q}(I_S/q^rA)=r,\qquad
I_S^2\subseteq q^rI_S.
\]
Conversely, every \(A\)-submodule \(I\) satisfying these conditions
arises uniquely from the overorder
\[
S=q^{-r}I=\{x\in q^{-r}A:q^rx\in I\}.
\]
\end{lemma}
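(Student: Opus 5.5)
We prove both directions of the correspondence between overorders \(S\) and submodules \(I\) directly from the definitions, using only that \(A\subseteq S\) are full-rank \(\Z_q\)-lattices in \(E\) with \([S:A]=q^r\).

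\emph{The forward direction.} Since \(S/A\) is a finite \(\Z_q\)-module of order \(q^r\), it is annihilated by \(q^r\), so \(q^rS\subseteq A\); this is the containment \(I_S\subseteq A\). The containment \(q^rA\subseteq q^rS=I_S\) follows from \(A\subseteq S\). Multiplication by \(q^r\) is an isomorphism of \(\Z_q\)-modules \(S/A\to q^rS/q^rA\), which gives
\[
\ell_{\Z_q}(I_S/q^rA)=\ell_{\Z_q}(S/A)=r.
\]
The ideal condition uses that \(S\) is a ring:
\[
I_S^2=q^{2r}S\cdot S\subseteq q^{2r}S=q^rI_S.
\]
Finally, \(I_S\) is an \(A\)-submodule because \(AS\subseteq S\).

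\emph{The converse.} Let \(I\) satisfy the three conditions and put \(S:=q^{-r}I\). Since \(q^r\) is a unit in \(E\), the set \(\{x\in q^{-r}A:q^rx\in I\}\) equals \(q^{-r}I\), because \(I\subseteq A\). The steps are as follows.
\begin{enumerate}[label=\textup{(\arabic*)}, leftmargin=2em]
\item \(S\supseteq A\), since \(q^rA\subseteq I\).
\item \(S\) is closed under multiplication, since
\[
S\cdot S=q^{-2r}I^2\subseteq q^{-2r}q^rI=q^{-r}I=S.
\]
\item \(1\in S\) by (1), so \(S\) is a subring of \(E\).
\item \(S\) is a finitely generated \(\Z_q\)-module, being isomorphic to the lattice \(I\). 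Hence \(S\) is an order containing \(A\), and its elements are integral, so \(S\subseteq\CO_E\) (alternatively, invoke Lemma~\ref{lem:multiplicator-endomorphism}).
\item The index is correct: multiplication by \(q^{-r}\) gives \(S/A\cong I/q^rA\), which has length \(r\), so \([S:A]=q^r\).
\end{enumerate}

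\emph{Uniqueness.} The two constructions are mutually inverse, because
\[
q^r(q^{-r}I)=I \qquad\text{and}\qquad q^{-r}(q^rS)=S.
\]
Thus each such \(I\) comes from exactly one overorder of index \(q^r\).

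There is no real obstacle here. The only point needing care is the index bookkeeping: the index condition \([S:A]=q^r\) is what forces \(q^r\) to annihilate \(S/A\), and this ensures \(I_S\subseteq A\) rather than merely \(I_S\subseteq S\).
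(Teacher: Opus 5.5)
Your proof is correct and complete. The index hypothesis forces \(q^r\) to annihilate \(S/A\), multiplication by \(q^{\pm r}\) gives \(S/A\cong I/q^rA\), the condition \(I^2\subseteq q^rI\) is exactly closure of \(q^{-r}I\) under multiplication, and integrality follows from finite generation. The paper gives no proof of this lemma; it quotes it as the local form of Lemma~7 of Deng--Espinosa, and your direct verification is the standard argument behind that citation.

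A small side remark: the hypothesis that \(I\) is an \(A\)-submodule is redundant in the converse, since \(AI=q^{-r}(q^rA)I\subseteq q^{-r}I^2\subseteq I\).
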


We apply the lemma to determine the index-\(q\) overorders in the remaining
triple-root case.

\begin{proposition}\label{prop:first-overorder-single-residue}
Let
\[
A=\Z_q[\beta]\simeq \Z_q[X]/(F(X))
\]
be a monogenic local cubic order such that
\[
F(X)=X^3+quX^2+qvX+qw,\qquad u,v,w\in\Z_q .
\]
If \(w\in\Z_q^\times\), then \(A\) is maximal.  If \(w=qw_1\), then \(A\)
has a unique index-\(q\) overorder
\[
S_1=\Z_q\langle 1,\beta,\delta\rangle,\qquad
\delta:=\frac{\beta^2+qu\beta+qv}{q}.
\]
In \(S_1\) one has
\begin{equation}\label{eq:first-overorder-relations}
\beta^2=q\delta-qu\beta-qv,\qquad
\beta\delta=-qw_1,\qquad
\delta^2=v\delta-w_1\beta-quw_1 .
\end{equation}
Let \(\bar v,\bar w_1\in\mathbb F_q\) denote the reductions of \(v,w_1\)
modulo \(q\), respectively.  We use the same notation for their images in
\(S_1/qS_1\).  There are three mutually exclusive cases:

\begin{enumerate}[label=\textup{(\alph*)}, leftmargin=2em]
\item If \(\bar v\ne0\), then
\[
S_1\simeq \Z_q\times\CO_K
\]
for a ramified quadratic \(\Q_q\)-algebra \(K\).  In particular, \(S_1\)
is maximal.
\item If \(\bar v=0\) and \(\bar w_1\ne0\), then \(S_1\) is monogenic and
maximal.  More precisely, \(S_1=\Z_q[\delta]\),
\[
S_1/qS_1\simeq \mathbb F_q[\bar\delta]/(\bar\delta^3).
\]
The element \(\delta\) satisfies
\begin{equation}\label{eq:first-overorder-eisenstein-polynomial}
\delta^3-v\delta^2+quw_1\delta-qw_1^2=0,
\end{equation}
and the polynomial in \eqref{eq:first-overorder-eisenstein-polynomial} is
Eisenstein.
\item If \(\bar v=\bar w_1=0\), then \(S_1/qS_1\) is local with square-zero
maximal ideal of dimension \(2\) as $\mathbb F_q$ vector space.  In particular, \(S_1\) is not Gorenstein.
\end{enumerate}
\end{proposition}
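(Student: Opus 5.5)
We treat the cases in order.

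\emph{The unit case.} If \(w\in\Z_q^\times\), then \(F\) is Eisenstein, since \(F\equiv X^3\pmod q\) and \(F(0)=qw\) has valuation exactly \(1\). Hence \(A=\Z_q[\beta]\) is the maximal order of a totally ramified cubic extension.

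\emph{Existence of \(S_1\).} Now let \(w=qw_1\). We apply Lemma~\ref{lem:overorders-by-truncated-ideals} with \(r=1\): index-\(q\) overorders correspond to \(A\)-submodules \(I\) with
\[
qA\subseteq I\subseteq A,\qquad \ell_{\Z_q}(I/qA)=1,\qquad I^2\subseteq qI.
\]
Such an \(I\) is the preimage of a line \(L\) in \(A/qA=\mathbb F_q[\bar\beta]/(\bar\beta^3)\). Since \(I\) is an \(A\)-submodule, \(L\) is an ideal; a one-dimensional ideal of this local ring must be the socle \(L=(\bar\beta^2)\). So \(I=qA+\beta^2A\) is the only candidate, and it remains to verify \(I^2\subseteq qI\).

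Set \(\gamma=\beta^2+qu\beta+qv\). Then \(I=qA+\gamma\Z_q\), and
\[
\beta\gamma=\beta^3+qu\beta^2+qv\beta=-qw=-q^2w_1 .
\]
Consequently \(\gamma^2=\gamma\beta^2+qu\gamma\beta+qv\gamma\), and each term lies in \(qI\):
\begin{itemize}
\item \(\gamma\beta^2=\beta(\gamma\beta)=-q^2w_1\beta\in q^2A\subseteq qI\);
\item \(qu\gamma\beta=-q^3uw_1\in qI\);
\item \(qv\gamma\in qI\).
\end{itemize}
The products \(qA\cdot I\subseteq qI\) are immediate. This proves that \(S_1=q^{-1}I=\langle 1,\beta,\delta\rangle\) with \(\delta=\gamma/q\), and that it is the unique index-\(q\) overorder.

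\emph{The relations \eqref{eq:first-overorder-relations}.}
\begin{itemize}
\item The first relation is the definition of \(\delta\).
\item The second follows from \(\beta\delta=\beta\gamma/q=-qw_1\).
\item For the third, compute \(\delta^2=\delta(\beta^2+qu\beta+qv)/q\). Using \(\delta\beta=-qw_1\), this gives \(\delta^2=-w_1\beta-quw_1+v\delta\).
\end{itemize}
Reducing modulo \(q\) yields
\[
\bar\beta^2=0,\qquad \bar\beta\bar\delta=0,\qquad \bar\delta^2=\bar v\bar\delta-\bar w_1\bar\beta .
\]

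\emph{Case (a), \(\bar v\neq0\).} Here \(\bar e=\bar\delta/\bar v\) is a nontrivial idempotent, since \(\bar\delta^2=\bar v\bar\delta-\bar w_1\bar\beta\) and \(\bar\beta\bar\delta=0\) give \(\bar\delta^3=\bar v^2\bar\delta\). We lift it by Hensel's lemma as in Proposition~\ref{prop:gorenstein-cubic-monogenic}. This splits \(S_1\simeq\Z_q\times B\), where \(B\) is a quadratic order. The factor \(B\) corresponds to the complementary piece containing \(\beta\), whose residual square is \(0\). To show \(B\) is maximal ramified, we compute discriminants: \(\operatorname{disc}S_1=\operatorname{disc}(F)/q^2\). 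Alternatively, we check that \(B\) is generated by an element with Eisenstein minimal polynomial, using \(\beta^2\equiv q\delta\) and that \(\delta\) is a unit on the \(B\)-factor. The latter check is where a little care is needed.

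\emph{Case (b), \(\bar v=0\), \(\bar w_1\neq0\).} Here \(\bar\delta^2=-\bar w_1\bar\beta\), so \(\bar\beta\) lies in \(\mathbb F_q[\bar\delta]\). Moreover \(\bar\delta^3=-\bar w_1\bar\beta\bar\delta=0\) while \(\bar\delta^2\neq0\). Hence \(1,\bar\delta,\bar\delta^2\) is a basis and \(S_1=\Z_q[\delta]\) by Nakayama. For the minimal polynomial, multiply \(\delta^2=v\delta-w_1\beta-quw_1\) by \(\delta\) and substitute \(\beta\delta=-qw_1\). This gives \eqref{eq:first-overorder-eisenstein-polynomial}. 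That polynomial is Eisenstein because \(q\mid v\) and \(v_q(qw_1^2)=1\), so \(S_1\) is maximal.

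\emph{Case (c), \(\bar v=\bar w_1=0\).} Here \(\bar\beta,\bar\delta\) span a two-dimensional ideal \(M\) with \(M^2=0\). Then \(S_1/qS_1\) is local with socle \(M\) of dimension \(2\), so it is not Gorenstein. Since the Gorenstein property descends to the reduction modulo the regular element \(q\), \(S_1\) is not Gorenstein. Alternatively, the Delone--Faddeev form of the good basis \(\{1,\beta,\delta\}\) read off from the relations is divisible by \(q\), so Proposition~\ref{prop:criterion-gorenstein} applies. The main obstacle overall is case (a)'s identification of the quadratic factor as ramified and maximal; the rest is direct computation.
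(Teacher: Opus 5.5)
\textbf{Gap in case (a).} Everything except case (a) is correct and essentially the paper's argument. In case (a) the real content, namely that the rank-two factor \(B\) is the maximal order of a ramified quadratic algebra, is never established.

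First, your idempotent is wrong. From \(\bar\delta^2=\bar v\bar\delta-\bar w_1\bar\beta\) one gets \((\bar\delta/\bar v)^2=\bar\delta/\bar v-\bar v^{-2}\bar w_1\bar\beta\) and \(\bar\delta^3=\bar v^2\bar\delta-\bar v\bar w_1\bar\beta\), so both of your claims fail when \(\bar w_1\ne0\). The correct idempotent is \(\bar\delta^2/\bar v^2=\bar v^{-1}\bar\delta-\bar v^{-2}\bar w_1\bar\beta\). This is a fixable slip.

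More seriously, even granting the splitting, \(B/qB\simeq\mathbb F_q[\epsilon]/(\epsilon^2)\) does not force maximality: every order \(\Z_q+q\CO_{K'}\) has the same special fiber. Neither of your two suggested routes closes this as written.
\begin{itemize}
\item The discriminant route works for odd \(q\). There \(\operatorname{disc}F\equiv-4q^3v^3\pmod{q^4}\), so \(v_q(\operatorname{disc}B)=1\). At \(q=2\) one can have \(v_2(\operatorname{disc}B)=2\); for example \(u=v=1\), \(w_1=0\) gives \(B=\Z_2[i]\). That value is shared by the non-maximal order \(\Z_2+2\Z_2[\zeta_3]\), so the discriminant alone decides nothing.
\item The Eisenstein route rests on a false claim. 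The \(B\)-component of \(\delta\) reduces to \(\bar v^{-1}\bar w_1\bar\beta\), which is nilpotent. So \(\delta\) is a unit on the \(\Z_q\)-factor, and it is \(\delta-v\) that is a unit on \(B\). With that correction, \(\beta_B(\beta_B+qu)=q(\delta_B-v)\) together with a norm computation does show that the minimal polynomial of \(\beta_B\) is Eisenstein, but you have to carry this out.
\end{itemize}

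\textbf{The paper's route.} The paper avoids the idempotent entirely. Since \(H(Y)=qY^3+quY^2+vY+w_1\) reduces to \(\bar vY+\bar w_1\) with \(\bar v\ne0\), Hensel's lemma gives a root \(y\in\Z_q\), and then \(a=qy\) is a root of \(F\). Write \(F=(X-a)G\) with \(G=X^2+bX+c\). Then \(b=qu+a\in q\Z_q\), and \(c=qv+ab\) has valuation exactly one, so \(G\) is Eisenstein. Hence \(\Z_q\times\CO_K\supseteq A\) with \(K=\Q_q[X]/(G)\) ramified. The Chinese-remainder exact sequence gives index \(q^{v_q(G(a))}=q\), and the uniqueness of the index-\(q\) overorder, which you did prove, identifies this order with \(S_1\). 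This argument is uniform in \(q\), including \(q=2\).

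The rest of your proposal is correct and matches the paper. This covers the unit case, uniqueness via the one-dimensional ideal \((\bar\beta^2)\), the relations, and case (b). It also covers case (c), including your Delone--Faddeev alternative: the good basis \(1,\beta,\delta\) has form \(-qX^3-quX^2Y-vXY^2-w_1Y^3\), which is non-primitive exactly when \(q\mid v\) and \(q\mid w_1\).
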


\begin{proof}
If \(w\) is a unit, then \(F\) is Eisenstein.  Hence
\(\Z_q[X]/(F)\) is a discrete valuation ring and is the integral closure of
\(\Z_q\) in \(\Q_q[X]/(F)\)(see
\cite[Chapter~I, \S6, Proposition~17 and its corollary]{Serre}).
Thus \(A\) is maximal.

Assume now \(w=qw_1\), so \(\overline F=X^3\) in \(\mathbb F_q\).  Let
\(\bar\beta\) denote the image of \(\beta\) in
\(A/qA\simeq\mathbb F_q[\bar\beta]/(\bar\beta^3)\).  By
Lemma~\ref{lem:overorders-by-truncated-ideals} with \(r=1\), an index-\(q\)
overorder is equivalent to an \(A\)-submodule \(I\) such that
\(qA\subset I\subset A\), \(\dim_{\mathbb F_q}(I/qA)=1\), and
\(I^2\subset qI\), where the corresponding overorder is \(q^{-1}I\).  Thus
\(L:=I/qA\) is a square-zero line in \(A/qA\).  Indeed,
\(x=a+b\bar\beta+c\bar\beta^2\) satisfies \(x^2=0\) only if \(a=b=0\).
Hence \(L=\mathbb F_q\bar\beta^2\).  Its inverse image is
\(I=qA+\Z_q(\beta^2+qu\beta+qv)\).  A direct calculation using
\(F(\beta)=0\) shows that \(I^2\subset qI\) holds exactly when
\(F(0)=qw\in q^2\Z_q\), as is the case because \(w=qw_1\).  Thus the
index-\(q\) overorder \(q^{-1}I\) is unique and is obtained by adjoining
\[
\delta=\frac{\beta^2+qu\beta+qv}{q}.
\]
Using \(F(\beta)=0\), one computes
\[
\beta^2=q\delta-qu\beta-qv,\qquad
\beta\delta
=\frac{\beta^3+qu\beta^2+qv\beta}{q}
=-w=-qw_1.
\]
Multiplying the defining equation \(q\delta=\beta^2+qu\beta+qv\) by
\(\delta\), and using \(\beta\delta=-qw_1\), gives
\[
q\delta^2
=\beta^2\delta+qu\beta\delta+qv\delta
=-qw_1\beta-q^2uw_1+qv\delta,
\]
hence the third relation in \eqref{eq:first-overorder-relations}.  These
relations show that \(S_1\) is closed under multiplication.

Denote by \(\bar\beta,\bar\delta\in S_1/qS_1\) the residue classes of
\(\beta,\delta\), respectively.  Reducing
\eqref{eq:first-overorder-relations} modulo \(q\) gives, in \(S_1/qS_1\),
\[
\bar\beta^2=0,\qquad
\bar\beta\bar\delta=0,\qquad
\bar\delta^2=\bar v\,\bar\delta-\bar w_1\,\bar\beta .
\]
If \(\bar v\ne0\), put
\[
H(Y):=qY^3+quY^2+vY+w_1.
\]
Its reduction modulo $q$ is the separable linear polynomial
\(\bar vY+\bar w_1\).  Hensel's lemma gives \(y\in\Z_q\) with
\(H(y)=0\), and therefore \(a:=qy\) is a root of \(F\).  Hence
\[
F(X)=(X-a)G(X),\qquad G(X)=X^2+bX+c.
\]
Comparison of coefficients gives
\[
b=qu+a\in q\Z_q,
\qquad
c=qv+ab\in q\Z_q^\times.
\]
Thus \(G\) is Eisenstein, and if \(K=\Q_q[X]/(G)\), then
\(\Z_q[X]/(G)=\CO_K\) by
\cite[Chapter~I, \S6, Proposition~17 and its corollary]{Serre}.
The product order \(\Z_q\times\CO_K\) contains \(A\).  Indeed, for
\(I=(X-a)\) and \(J=(G)\) in \(\Z_q[X]\), one has
\(I\cap J=IJ=(F)\) and
\(\Z_q[X]/(I+J)\cong\Z_q/(G(a))\).  The standard exact sequence
\[
0\longrightarrow \Z_q[X]/(F)
 \longrightarrow \Z_q[X]/(X-a)\times\Z_q[X]/(G)
 \longrightarrow \Z_q/(G(a))\longrightarrow 0
\]
therefore gives
\[
\bigl[\Z_q[X]/(X-a)\times\Z_q[X]/(G):\Z_q[X]/(F)\bigr]
=\#\bigl(\Z_q/(G(a))\bigr)
=q^{v_q(G(a))}=q,
\]
because \(G(a)\equiv c\pmod {q^2}\) and \(v_q(c)=1\).  The uniqueness of
the index-\(q\) overorder proved above therefore identifies this product
order with \(S_1\).  Hence \(S_1\) is maximal.

If \(\bar v=0\) and \(\bar w_1\ne0\), then the third relation in
\eqref{eq:first-overorder-relations} gives
\begin{equation}
\label{eqn: relation-beta-w_1}
\beta=\frac{v\delta-\delta^2-quw_1}{w_1}.
\end{equation}
In \(S_1/qS_1\), the preceding identity gives
\[
\bar\beta=-\bar w_1^{-1}\bar\delta^2,\qquad \bar\delta^3=0.
\]
Thus \(S_1/qS_1\) is generated by \(\bar\delta\) and is isomorphic to
\(\mathbb F_q[\bar\delta]/(\bar\delta^3)\).  Nakayama's lemma gives
\(S_1=\Z_q[\delta]\). 

Substituting \eqref{eqn: relation-beta-w_1} into \(\beta\delta=-qw_1\) gives
\eqref{eq:first-overorder-eisenstein-polynomial}.  Because
\(v\in q\Z_q\) and \(w_1\in\Z_q^\times\), this polynomial is Eisenstein.
The first paragraph of the proof, applied to \(\delta\), shows that
\(\Z_q[\delta]=S_1\) is maximal.

Finally, if \(\bar v=\bar w_1=0\), then
$(\bar\beta,\bar\delta)^2=0$
and the two-dimensional maximal ideal of \(S_1/qS_1\) is its socle.  The
Artinian socle criterion \cite[Proposition~21.5]{Eis13} therefore shows that
\(S_1/qS_1\) is not Gorenstein.  Since \(S_1\) is \(\Z_q\)-torsion-free,
\(q\) is \(S_1\)-regular.  If \(S_1\) were Gorenstein, the
regular-element criterion \cite[Proposition~3.1.19]{BrunsHerzog} would imply
that \(S_1/qS_1\) is Gorenstein, a contradiction.  Hence \(S_1\) is not
Gorenstein.
\end{proof}

\begin{definition}\label{def:square-zero-special-fiber}
A local cubic \(\Z_q\)-order \(S\) has \emph{square-zero special fiber} if
\(S/qS\) is local with residue field \(\mathbb F_q\) and its maximal ideal
\(M\) satisfies
\[
M^2=0,
\qquad
\dim_{\mathbb F_q}M=2.
\]
Equivalently, as an \(\mathbb F_q\)-algebra,
\[
S/qS\simeq\mathbb F_q\oplus M
\]
where the multiplication on the right is
\[
(a,m)(b,n)=(ab,an+bm).
\]
For an order \(A\) as in
Proposition~\ref{prop:first-overorder-single-residue} that has the unique
index-\(q\) overorder \(S_1\) described there, we say that \(A\) is in the
\emph{square-zero first-overorder case} if \(S_1\) has square-zero special
fiber.
\end{definition}

\begin{corollary}\label{cor:first-overorder-terminal-cases}
In cases \textup{(a)} and \textup{(b)} of
Proposition~\ref{prop:first-overorder-single-residue}, the local 
identity \eqref{eq:local-yun-deng-espinosa-identity} holds for the
original order \(A\).
\end{corollary}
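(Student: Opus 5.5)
Under hypotheses (a) or (b), Proposition~\ref{prop:first-overorder-single-residue} shows that the unique index-\(q\) overorder \(S_1\) is already maximal, so \(S_1=\CO_E\) and \([\CO_E:A]=q\). The plan is to compute both sides of \(J_A(s)=\widetilde L_A(t)\) in closed form and compare them. This is equivalent to \eqref{eq:local-yun-deng-espinosa-identity} by \eqref{eq:local-overorder-normalization}.

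\emph{Overorder side.} Every overorder \(T\) of \(A\) lies in \(\CO_E=S_1\). Since \(\ell_{\Z_q}(\CO_E/A)=1\), \(T\) is either \(A\) or \(\CO_E\). The order \(A\) is monogenic, hence Gorenstein, so \(h_A=1\). It is local with residue field \(\mathbb F_q\), because \(A/qA\simeq\mathbb F_q[\bar\beta]/(\bar\beta^3)\). Therefore \(C_A(t)=(1-t)^{-1}\), and \eqref{eq:local-overorder-t-series} gives
\[
\widetilde L_A(t)=\frac{1}{1-t}+q\,t^2\,\zeta_E(s).
\]
Here \(\zeta_E(s)\) is \((1-t)^{-2}\) in case (a), since \(\CO_E=\Z_q\times\CO_K\) with \(K\) ramified, and \((1-t)^{-1}\) in case (b), since \(\CO_E\) is totally ramified.

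\emph{Yun side.} Since \(A\) is Gorenstein, \(A^\vee\simeq A\), and \(J_A(s)\) is the ideal-counting series \(\sum_I [A:I]^{-s}\) over finite-index ideals \(I\subseteq A\). By Proposition~\ref{prop:yun-exact-decomposition} and Lemma~\ref{lem:multiplicator-endomorphism}, we split the ideals according to their multiplicator ring, which is \(A\) or \(\CO_E\).

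First take ideals with multiplicator \(\CO_E\). These are the \(\CO_E\)-ideals contained in \(A\), that is, contained in the conductor. Since \(A\) is local of index \(q\) in \(\CO_E\), the conductor equals \(\mathfrak m_A\), which has index \(q\) in \(A\) and \(q^2\) in \(\CO_E\). Their contribution is therefore \(t\sum_{J\subseteq\mathfrak m_A}[\mathfrak m_A:J]^{-s}=t\,\zeta_E(s)\).

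Next take ideals with multiplicator \(A\). These are invertible, hence principal \(aA\) with \(a\in A\), so their series is \(\sum_{a\in (A\setminus\{0\})/A^\times}t^{v_q(N a)}\). I would evaluate this by fibering over \(\CO_E\)-ideals via \(aA\mapsto a\CO_E\). Each fibre over an ideal with a generator in \(A\) has size \([\CO_E^\times:A^\times]\) divided by the number of cosets of \(A^\times\) in \(\CO_E^\times\) whose elements multiply a fixed generator into \(A\). Concretely, \(\CO_E^\times/A^\times\simeq (\CO_E/\mathfrak m_A)^\times/\mathbb F_q^\times\) has order \(q\). One then checks, ideal by ideal:
\begin{itemize}
\item \(\CO_E\) itself contributes the single ideal \(A\);
\item an ideal \(J\subseteq\mathfrak m_A\) contributes \(q\) principal \(A\)-ideals, all of index \([A:J]\). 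Each lies strictly between \(J\) and \(\mathfrak m_A\)-type lattices, so one must verify these are genuinely distinct and of multiplicator \(A\), as opposed to \(J\) itself;
\item the remaining \(\CO_E\)-ideals (in case (a), those not contained in \(\mathfrak m_A\), e.g.\ \(\CO_E\)-ideals with unit component in one factor) are handled by the same coset count.
\end{itemize}

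Summing, the target is
\[
\frac{1}{1-t}+(q t^2-t)\zeta_E(s)
\]
for the principal part. Adding \(t\,\zeta_E(s)\) then matches \(\widetilde L_A(t)\).

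The main obstacle is this principal-ideal count, in particular case (a), where \(\CO_E=\Z_q\times\CO_K\) is not local and \(\zeta_E=(1-t)^{-2}\). There I would work with the explicit models: \(\CO_E\) is \(\Z_q\times\Z_q[\pi]\), respectively \(\Z_q[\pi]\) for \(\pi\) a uniformizer, and \(A=\Z_q+\mathfrak m_A\) with \(\mathfrak m_A=\{x:\bar x\in\mathbb F_q\}\)-type description. I would enumerate the generators \(a\) degree by degree in \(t\), and sanity-check the result against Yun's functional equation for \(\widetilde J_A\) with \(\delta=1\). That check forces \(\widetilde J_A\) to be a palindromic quadratic in \(t\) up to the factor \(q\), which pins down the only undetermined coefficient.
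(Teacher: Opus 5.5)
Your overorder side is correct, and so is the split of \(J_A\) by multiplicator ring. The conductor is indeed \(\mathfrak m_A\): \(\CO_E/A\simeq A/\mathfrak m_A\) as \(A\)-modules, so \(\mathfrak m_A\CO_E\subseteq A\). The ideals with multiplicator \(\CO_E\) therefore contribute \(t\,\zeta_E(s)\).

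\textbf{The step that fails is the principal-ideal count.} The isomorphism \(\CO_E^\times/A^\times\simeq(\CO_E/\mathfrak m_A)^\times/\mathbb F_q^\times\) is fine. However, its order is \(q\) only in case (b), where \(\CO_E/\mathfrak m_A\simeq\mathbb F_q[\varepsilon]/(\varepsilon^2)\). In case (a) one has
\begin{itemize}
\item \(A=\{(x,y)\in\Z_q\times\CO_K: x\equiv y \bmod \pi\}\);
\item \(\mathfrak m_A=q\Z_q\times\pi\CO_K\), which is the radical of \(\CO_E\);
\item hence \(\CO_E/\mathfrak m_A\simeq\mathbb F_q\times\mathbb F_q\), and the quotient has order \((q-1)^2/(q-1)=q-1\).
\end{itemize}
With \(q\) in place of \(q-1\), your case (a) principal series would be \(1+qt^2/(1-t)^2\). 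The target is \((1-2t+qt^2)/(1-t)^2=1+(q-1)t^2/(1-t)^2\), so the comparison would appear to fail exactly in the case you single out.

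There is a second slip. A principal ideal \(aA\) lying over \(J=a\CO_E\) has index \([A:aA]=[\CO_E:a\CO_E]=q\,[A:J]\), not \([A:J]\). With \([A:J]\), case (b) would give \(1+qt/(1-t)\) instead of \(1+qt^2/(1-t)\).

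With both corrections the count closes.
\begin{itemize}
\item Case (b): \(A=\Z_q+\pi^2\CO_E\), so elements of \(A\) have valuation \(0\) or \(\ge 2\). This gives \(1+q\sum_{k\ge2}t^k\).
\item Case (a): a generator \((q^iu,\pi^ju')\) lies in \(A\) for every unit coset when \(i,j\ge1\), for exactly one coset when \(i=j=0\), and never otherwise. This gives \(1+(q-1)\sum_{i,j\ge1}t^{i+j}\).
\end{itemize}
Both agree with \(\frac1{1-t}+(qt^2-t)\zeta_E(s)\).

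The paper needs none of this enumeration. Its proof is exactly the functional-equation argument you list only as a sanity check.
\begin{itemize}
\item Yun's theorem makes \(P(t)=\zeta_E(s)^{-1}J_A(s)\) a polynomial of degree at most \(2\) with \(P(t)=qt^2P(q^{-1}t^{-1})\). That is, \(p_2=qp_0\), with \(p_1\) unconstrained.
\item \(\zeta_E(s)^{-1}\widetilde L_A(t)\), namely \(1-t+qt^2\) or \(1+qt^2\), satisfies the same relation.
\item The \(t^0\) and \(t^1\) coefficients of \(J_A\) and \(\widetilde L_A\) are all \(1\), since the colength-\(0\) and colength-\(1\) ideals are \(A\) and \(\mathfrak m_A\).
\end{itemize}
So the difference of the two polynomials has vanishing \(p_0,p_1\), hence vanishing \(p_2\). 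The functional equation fixes only the quadratic coefficient from the constant one, so two coefficients must be checked directly, but these are trivial.

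Your corrected enumeration is self-contained, avoids Yun's functional equation, and produces \(J_A\) explicitly. The paper's route is shorter and avoids the unit-index bookkeeping, which is exactly where your error entered.
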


\begin{proof}
In both cases the unique index-\(q\) overorder is \(\CO_E\), so
\(\delta(A)=1\).  Let
\[
P_A^{\mathrm{Yun}}(t):=\zeta_E(s)^{-1}J_A(s),
\qquad
P_A^{\mathrm{DS}}(t):=\zeta_E(s)^{-1}\widetilde L_A(t).
\]
Yun's functional equation \cite[Theorem~2.5]{ZYun} shows that
\(P_A^{\mathrm{Yun}}\) has degree at most \(2\) and satisfies
\[
P(t)=qt^2P(q^{-1}t^{-1}).
\]

There are only two overorders, \(A\) and \(\CO_E\).  Since \(A\) is
Gorenstein with one degree-one residue factor,
\eqref{eq:local-overorder-t-series} gives
\[
\widetilde L_A(t)=\frac1{1-t}+qt^2\zeta_E(s).
\]
In case \textup{(a)}, one has
\(\zeta_E(s)=(1-t)^{-2}\), and in case \textup{(b)}, one has
\(\zeta_E(s)=(1-t)^{-1}\).  Hence
\[
P_A^{\mathrm{DS}}(t)=
\begin{cases}
1-t+qt^2,&\text{in case \textup{(a)}},\\
1+qt^2,&\text{in case \textup{(b)}}.
\end{cases}
\]
Both polynomials satisfy the palindromic relation
\(P(t)=qt^2P(q^{-1}t^{-1})\).

Finally, \(A^\vee\) is an invertible \(A\)-module because \(A\) is
monogenic.  After scaling by a generator, the colength-zero and
colength-one submodules counted by \(J_A\) are \(A\) and its maximal
ideal.  Thus
\[
[t^0]J_A(s)=[t^1]J_A(s)=1.
\]
The same two coefficients of \(\widetilde L_A(t)\) are \(1\).  Since
\(\zeta_E(s)^{-1}\) has constant term \(1\), the constant and linear
coefficients of
\(P_A^{\mathrm{Yun}}-P_A^{\mathrm{DS}}\) vanish.  Its palindromic
relation forces the quadratic coefficient to vanish as well.  Therefore
\(J_A(s)=\widetilde L_A(t)\).
\end{proof}
\begin{proposition}\label{prop:square-zero-index-q-overorders}
Keep the notation of Proposition~\ref{prop:first-overorder-single-residue},
and assume that we are in case \textup{(c)} there.  Then
\(S_1\) has square-zero special fiber in the sense of
Definition~\ref{def:square-zero-special-fiber}.  Thus
\[
v=qv_1,\qquad w_1=qw_2
\]
with \(v_1,w_2\in\Z_q\), and
\[
S_1=\Z_q\langle 1,\beta,\delta\rangle,\qquad
\delta=\frac{\beta^2+qu\beta+qv}{q}
\]
satisfies
\[
S_1/qS_1\simeq
\mathbb F_q\oplus
\mathbb F_q\bar\beta\oplus
\mathbb F_q\bar\delta,
\qquad
(\bar\beta,\bar\delta)^2=0 .
\]
Let
\[
\Phi_{S_1}(Y,Z):=
Y^3+\bar uY^2Z+\bar v_1YZ^2+\bar w_2Z^3
\in\mathbb F_q[Y,Z].
\]
Then the index-\(q\) overorders of \(S_1\) are in natural bijection with
the projective roots of \(\Phi_{S_1}\):
\[
\{\,S:\ S_1\subseteq S\subseteq\CO_E,\ [S:S_1]=q\,\}
\longleftrightarrow
\{\,[a:b]\in\mathbb P^1(\mathbb F_q):\Phi_{S_1}(a,b)=0\,\}.
\]
For a root \([a:b]\), choose lifts \(a,b\in\Z_q\), set
\[
g_{a,b}:=a\beta+b\delta,\qquad
I_{a,b}:=qS_1+\Z_q g_{a,b},
\]
and the corresponding overorder is
\[
S_{a,b}=q^{-1}I_{a,b}=S_1+\Z_q\frac{g_{a,b}}{q}.
\]
\end{proposition}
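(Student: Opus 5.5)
The plan has three stages: read off the square-zero structure of \(S_1/qS_1\) from the relations already established, translate index-\(q\) overorders into square-zero lines using Lemma~\ref{lem:overorders-by-truncated-ideals}, and reduce the remaining integrality condition to the vanishing of \(\Phi_{S_1}\).

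\textbf{Square-zero fiber.} In case (c) we have \(\bar v=\bar w_1=0\), so we may write \(v=qv_1\) and \(w_1=qw_2\). Reducing \eqref{eq:first-overorder-relations} modulo \(q\) gives \(\bar\beta^2=\bar\beta\bar\delta=\bar\delta^2=0\). Hence \(S_1/qS_1=\mathbb F_q\oplus\mathbb F_q\bar\beta\oplus\mathbb F_q\bar\delta\) is local, with maximal ideal \(M=(\bar\beta,\bar\delta)\) satisfying \(M^2=0\) and \(\dim M=2\). This is exactly Definition~\ref{def:square-zero-special-fiber}.

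\textbf{Reduction to square-zero lines.} Apply Lemma~\ref{lem:overorders-by-truncated-ideals} with \(r=1\) to \(S_1\). Index-\(q\) overorders \(S\) correspond bijectively to \(S_1\)-submodules \(I\) with \(qS_1\subset I\subset S_1\), \(\dim_{\mathbb F_q}(I/qS_1)=1\), and \(I^2\subseteq qI\), via \(S=q^{-1}I\).

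\begin{itemize}
\item The line \(L=I/qS_1\) must be square-zero, since \(I^2\subseteq qS_1\).
\item An element \(c+m\) with \(c\in\mathbb F_q\) and \(m\in M\) squares to \(c^2+2cm\), so it squares to zero only if \(c=0\). Hence \(L\subset M\), i.e. \(L=\mathbb F_q(a\bar\beta+b\bar\delta)\) for some \([a:b]\in\mathbb P^1(\mathbb F_q)\).
\item Conversely, every line \(L\subset M\) is an ideal of \(S_1/qS_1\), because \(M\cdot M=0\). So \(I_{a,b}=qS_1+\Z_q g_{a,b}\) is an \(S_1\)-submodule.
\end{itemize}

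The only remaining condition is therefore \(I_{a,b}^2\subseteq qI_{a,b}\). Since \(q\cdot I_{a,b}\) and \(q^2S_1\) already lie in \(qI_{a,b}\), this is equivalent to
\[
g_{a,b}^2\in qI_{a,b}=q^2S_1+q\Z_q g_{a,b}.
\]
Equivalently, \(g^2/q\in S_1\) reduces modulo \(q\) into the line \(\mathbb F_q\bar g\), where \(g=g_{a,b}\). This condition depends only on \([a:b]\) and not on the chosen lifts, because changing \(a,b\) by multiples of \(q\) changes \(I_{a,b}\) only by elements of \(qS_1\).

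\textbf{The integrality computation (main obstacle).} Expand
\[
g^2=a^2\beta^2+2ab\,\beta\delta+b^2\delta^2
\]
using \eqref{eq:first-overorder-relations} together with \(v=qv_1\) and \(w_1=qw_2\):
\[
\beta^2=q\delta-qu\beta-q^2v_1,\qquad \beta\delta=-q^2w_2,\qquad \delta^2=qv_1\delta-qw_2\beta-q^2uw_2 .
\]
Dividing by \(q\) and reducing gives
\[
\overline{g^2/q}=(a^2+\bar v_1b^2)\bar\delta-(\bar ua^2+\bar w_2b^2)\bar\beta .
\]
This element lies in \(\mathbb F_q(a\bar\beta+b\bar\delta)\) exactly when the \(2\times2\) determinant vanishes:
\[
a(a^2+\bar v_1b^2)+b(\bar ua^2+\bar w_2b^2)=a^3+\bar ua^2b+\bar v_1ab^2+\bar w_2b^3=0,
\]
which is \(\Phi_{S_1}(a,b)=0\). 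Getting the signs and coefficients in this step right is where I expect the real care to be needed. In particular, the \(\bar u\)-term must enter with the correct sign to match \(\Phi_{S_1}\); if it does not, I would recheck the sign convention for \(\delta\).

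Uniqueness in the correspondence follows from the uniqueness part of Lemma~\ref{lem:overorders-by-truncated-ideals}. Finally, \(S_{a,b}=q^{-1}I_{a,b}=S_1+\Z_q\,g_{a,b}/q\), as claimed.
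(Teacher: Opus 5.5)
Your proposal is correct and follows the paper's proof essentially step for step: Lemma~\ref{lem:overorders-by-truncated-ideals} with $r=1$ reduces the problem to lines in the square-zero maximal ideal, and the condition $g_{a,b}^2\in qI_{a,b}$, computed from the relations with $v=qv_1$ and $w_1=qw_2$, becomes the vanishing of a $2\times 2$ determinant, which is $\Phi_{S_1}(a,b)=0$ with exactly the signs you found, so your worry about the sign of the $\bar u$-term is unfounded. The only differences are cosmetic: you force $L\subset M$ via $L^2=0$ rather than via the paper's remark that the one-dimensional ideals are the lines in the maximal ideal, and your sentence about cross terms should say that $qS_1\,g_{a,b}\subseteq qI_{a,b}$, which holds because $I_{a,b}$ is an $S_1$-module.
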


\begin{proof}
By Lemma~\ref{lem:overorders-by-truncated-ideals} with \(A=S_1\) and
\(r=1\), an index-\(q\) overorder of \(S_1\) is equivalent to an
\(S_1\)-submodule \(I\) satisfying
\[
qS_1\subset I\subset S_1,\qquad \ell_{\Z_q}(I/qS_1)=1,\qquad
I^2\subset qI .
\]
The algebra \(S_1/qS_1\) has square-zero maximal ideal
\[
\mathfrak n=(\bar\beta,\bar\delta).
\]
The one-dimensional ideals of \(S_1/qS_1\) are exactly the lines in
\(\mathfrak n\).  Therefore every possible \(I\) has the form
\[
I=qS_1+\Z_qg_{a,b},\qquad g_{a,b}=a\beta+b\delta,
\]
for a unique point \([a:b]\in\mathbb P^1(\mathbb F_q)\).

It remains to impose \(I^2\subset qI\).  In the square-zero case, the
relations \eqref{eq:first-overorder-relations} become
\[
\beta^2=q\delta-qu\beta-q^2v_1,\qquad
\beta\delta=-q^2w_2,\qquad
\delta^2=qv_1\delta-qw_2\beta-q^2uw_2 .
\]
Hence, modulo \(q^2S_1\),
\[
g_{a,b}^2
\equiv
q\left(
(-a^2u-b^2w_2)\beta+(a^2+b^2v_1)\delta
\right).
\]
The condition \(g_{a,b}^2\in qI_{a,b}\) is therefore equivalent to the
condition that the vector
\[
(-a^2\bar u-b^2\bar w_2,\ a^2+b^2\bar v_1)
\]
be proportional to \((a,b)\) in the two-dimensional vector space
\(\mathfrak n\).  Taking the determinant gives
\[
b(-a^2\bar u-b^2\bar w_2)-a(a^2+b^2\bar v_1)=0,
\]
or equivalently
\[
a^3+\bar u a^2b+\bar v_1ab^2+\bar w_2b^3=0.
\]
This is precisely \(\Phi_{S_1}(a,b)=0\).  Thus the projective roots of
\(\Phi_{S_1}\) are exactly the lines producing index-\(q\) overorders, and
the formula \(S_{a,b}=q^{-1}I_{a,b}\) is the inverse construction in
Lemma~\ref{lem:overorders-by-truncated-ideals}.
\end{proof}

We shall use the following basis-independent form of this calculation.

\begin{proposition}\label{prop:universal-square-zero-branching}
Let \(S\) be a local cubic \(\Z_q\)-order with square-zero special fiber.  We have
\[
S/qS\simeq \mathbb F_q\oplus M,\qquad M^2=0,\qquad \dim_{\mathbb F_q}M=2.
\]
Choose a \(\Z_q\)-basis \(1,x,y\) of \(S\) whose reductions
\(\bar x,\bar y\) form a basis of \(M\), and write
\begin{align*}
x^2&=q(a_0+a_1x+a_2y),\\
xy&=q(b_0+b_1x+b_2y),\\
y^2&=q(c_0+c_1x+c_2y),
\end{align*}
with \(a_i,b_i,c_i\in\Z_q\).  For \([A:B]\in\mathbb P^1(\mathbb F_q)\),
choose lifts \(A,B\in\Z_q\), put
\[
g_{A,B}:=Ax+By,\qquad I_{A,B}:=qS+\Z_qg_{A,B},
\]
and the quadratic forms
\[
\Gamma_i(A,B):=A^2a_i+2ABb_i+B^2c_i\qquad (i=0,1,2).
\]
Then \(I_{A,B}\) gives an index-\(q\) overorder of \(S\)
\[
S_{[A:B]}:=q^{-1}I_{A,B}=S+\Z_q\frac{g_{A,B}}q,
\]
if and only if
\begin{equation}\label{eq:universal-square-zero-branching}
\overline{\Gamma_0(A,B)}=0,\qquad
B\,\overline{\Gamma_1(A,B)}-A\,\overline{\Gamma_2(A,B)}=0 .
\end{equation}
Moreover, every index-\(q\) overorder of \(S\) is obtained uniquely in this
way.
\end{proposition}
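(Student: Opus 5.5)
The argument runs parallel to Proposition~\ref{prop:square-zero-index-q-overorders}, now in an arbitrary basis. First I note why the structure constants are divisible by \(q\). Since \(\bar x^2=\bar x\bar y=\bar y^2=0\) in \(S/qS\), each of \(x^2,xy,y^2\) lies in \(qS\). This justifies writing them as \(q(a_0+a_1x+a_2y)\), and so on.

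By Lemma~\ref{lem:overorders-by-truncated-ideals} with \(A=S\) and \(r=1\), index-\(q\) overorders of \(S\) correspond bijectively to \(S\)-submodules \(I\) with \(qS\subset I\subset S\), \(\ell_{\Z_q}(I/qS)=1\) and \(I^2\subseteq qI\). The overorder attached to \(I\) is \(q^{-1}I\).

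The quotient \(I/qS\) is a one-dimensional ideal of \(S/qS=\mathbb F_q\oplus M\). It cannot contain a unit, so it lies in \(M\). Since \(M^2=0\), every line in \(M\) is an ideal. Hence the candidates are exactly \(I_{A,B}=qS+\Z_q g_{A,B}\) for \([A:B]\in\mathbb P^1(\mathbb F_q)\). This \(I_{A,B}\) is independent of the chosen lifts and is automatically an \(S\)-submodule. This gives the uniqueness statement, and it reduces the proof to checking the condition \(I_{A,B}^2\subseteq qI_{A,B}\).

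Next I expand \(I_{A,B}^2\). It is generated by \(q^2S\), by \(qg_{A,B}S\subseteq q I_{A,B}\), and by \(g_{A,B}^2\). The first two lie in \(qI_{A,B}\) automatically, so the condition reduces to \(g_{A,B}^2\in qI_{A,B}\). From the multiplication table,
\[
g_{A,B}^2=A^2x^2+2ABxy+B^2y^2=q\bigl(\Gamma_0+\Gamma_1x+\Gamma_2y\bigr).
\]
So the condition becomes \(\Gamma_0+\Gamma_1x+\Gamma_2y\in I_{A,B}\). Reducing modulo \(qS\), this says that \(\overline{\Gamma_0}+\overline{\Gamma_1}\bar x+\overline{\Gamma_2}\bar y\) lies in the line \(\mathbb F_q(\bar A\bar x+\bar B\bar y)\).

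Since \(1,\bar x,\bar y\) is a basis of \(S/qS\), this membership is equivalent to two conditions. The first is \(\overline{\Gamma_0}=0\). The second is that \((\overline{\Gamma_1},\overline{\Gamma_2})\) is proportional to \((\bar A,\bar B)\), that is, the determinant \(B\overline{\Gamma_1}-A\overline{\Gamma_2}\) vanishes. These are exactly the conditions \eqref{eq:universal-square-zero-branching}.

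Both conditions are homogeneous in \((A,B)\) and depend only on \(A,B\) modulo \(q\), so they are well defined on \(\mathbb P^1(\mathbb F_q)\). Finally, \(S_{[A:B]}=q^{-1}I_{A,B}=S+\Z_q g_{A,B}/q\), because \(q^{-1}\cdot qS=S\).

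I do not expect a serious obstacle. The one point needing care is the reduction of \(I^2\subseteq qI\) to the single generator \(g_{A,B}^2\). For this I will use \(qS\cdot I\subseteq qI\), which holds because \(I\) is an \(S\)-module. The factor \(2\) in \(\Gamma_i\) causes no difficulty when \(q=2\), since it is simply part of the formula.
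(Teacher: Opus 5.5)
Your proposal is correct and follows essentially the same route as the paper. Lemma~\ref{lem:overorders-by-truncated-ideals} reduces the problem to lines in \(M\), and admissibility of the line through \(\bar A\bar x+\bar B\bar y\) becomes the condition that the class of \(q^{-1}g_{A,B}^2=\Gamma_0+\Gamma_1x+\Gamma_2y\) lies in that line. You also make explicit two points the paper leaves implicit: why the structure constants are divisible by \(q\), and why \(I_{A,B}^2\subseteq qI_{A,B}\) reduces to the single condition \(g_{A,B}^2\in qI_{A,B}\).
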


\begin{proof}
As in the proof of Proposition~\ref{prop:square-zero-index-q-overorders},
Lemma~\ref{lem:overorders-by-truncated-ideals} identifies the possible
index-\(q\) overorders with the lines in \(M\).  The line generated by
\(A\bar x+B\bar y\) is admissible precisely when the class of
\[
q^{-1}g_{A,B}^2
=\Gamma_0(A,B)+\Gamma_1(A,B)x+\Gamma_2(A,B)y
\]
lies in that line.  Its scalar component must vanish, and its vector
component must be proportional to \((A,B)\).  These are exactly the two
conditions in \eqref{eq:universal-square-zero-branching}.  The uniqueness
and the formula for \(S_{[A:B]}\) follow from the same lemma.
\end{proof}

\begin{proposition}\label{prop:universal-square-zero-index-q-overorder-type}
Keep the hypotheses and notation of
Proposition~\ref{prop:universal-square-zero-branching}.  Let
\([A:B]\in\mathbb P^1(\mathbb F_q)\) satisfy
\eqref{eq:universal-square-zero-branching}, put
\[
g:=Ax+By,
\]
and choose
\[
h:=Cx+Dy
\]
so that \(g,h\) reduce to an \(\mathbb F_q\)-basis of \(M\).  Let
\begin{align*}
g^2&=q(\gamma_0+\gamma_1g+\gamma_2h),\\
gh&=q(\delta_0+\delta_1g+\delta_2h),\\
h^2&=q(\epsilon_0+\epsilon_1g+\epsilon_2h),
\end{align*}
with coefficients in \(\Z_q\).  The condition
\eqref{eq:universal-square-zero-branching} is equivalent to
\[
\gamma_0,\gamma_2\in q\Z_q .
\]
Write
\[
\gamma_0=q\gamma_0^+,\qquad \gamma_2=q\gamma_2^+,
\]
and
\[
\eta:=g/q, \qquad S':=S+\Z_q\eta .
\]
Thus \(S'\) is the index-\(q\) overorder corresponding to \([A:B]\).
Then \(1,h,\eta\) is a \(\Z_q\)-basis of \(S'\), and in \(S'/qS'\) one has
\begin{equation}\label{eq:universal-index-q-overorder-residual-table}
\bar h^2=0,\qquad
\bar h\bar\eta=\bar\delta_2\bar h,\qquad
\bar\eta^2=\bar\gamma_0^++\bar\gamma_1\bar\eta
+\bar\gamma_2^+\bar h .
\end{equation}
Moreover
\begin{equation}\label{eq:universal-index-q-overorder-root}
\bar\delta_2^2-\bar\gamma_1\bar\delta_2-\bar\gamma_0^+=0.
\end{equation}
Let
\[
P(T):=T^2-\bar\gamma_1T-\bar\gamma_0^+\in\mathbb F_q[T].
\]
Then \(\bar\delta_2\) is a root of \(P\), whose derivative is
\(P'(T)=2T-\bar\gamma_1\).  Thus:
\begin{enumerate}[label=\textup{(\alph*)}, leftmargin=2em]
\item If \(P'(\bar\delta_2)\ne0\), then \(S'\) has a nontrivial
idempotent; equivalently \(S'\simeq\Z_q\times B\) for a
quadratic \(\Z_q\)-order \(B\).
\item If \(P'(\bar\delta_2)=0\), set
\[
\bar\varepsilon:=\bar\eta-\bar\delta_2 .
\]
Then
\begin{equation}\label{eq:universal-multiple-root-index-q-overorder}
\bar h^2=0,\qquad \bar h\bar\varepsilon=0,\qquad
\bar\varepsilon^2=\bar\gamma_2^+\bar h .
\end{equation}
If \(\bar\gamma_2^+\ne0\), then
\[
S'/qS'\simeq \mathbb F_q[\bar\varepsilon]/(\bar\varepsilon^3),
\]
so \(S'\) is monogenic and Gorenstein.  If \(\bar\gamma_2^+=0\), then
\[
S'/qS'\simeq \mathbb F_q\oplus
\mathbb F_q\bar h\oplus\mathbb F_q\bar\varepsilon,
\qquad
(\bar h,\bar\varepsilon)^2=0,
\]
so \(S'\) again has square-zero special fiber.
\end{enumerate}
\end{proposition}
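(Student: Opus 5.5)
The plan is a direct change-of-basis computation followed by an analysis of the residual multiplication table of \(S'\).

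\textbf{Step 1: rewrite the branching condition in the basis \(g,h\).} Since \(g,h\) reduce to a basis of \(M\), the line \(\mathbb F_q\bar g\) is admissible in the sense of Proposition~\ref{prop:universal-square-zero-branching} exactly when \(q^{-1}g^2\equiv \gamma_0+\gamma_1 g+\gamma_2 h\) lies in \(\mathbb F_q\bar g\) modulo \(qS\). This means precisely \(\bar\gamma_0=0\) and \(\bar\gamma_2=0\). To match the two conditions in \eqref{eq:universal-square-zero-branching}, I will note that the scalar part \(\overline{\Gamma_0}\) equals \(\bar\gamma_0\). The vector part is the determinant of the vector \((\bar\Gamma_1,\bar\Gamma_2)\) against \((A,B)\); after the invertible change of basis \((x,y)\mapsto(g,h)\), this determinant becomes a unit multiple of \(\bar\gamma_2\). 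This gives \(\gamma_0,\gamma_2\in q\Z_q\).

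\textbf{Step 2: the basis and residual table of \(S'\).} The lattice \(S'=S+\Z_q\eta\) is spanned by \(1,x,y,\eta\). Since \(x,y\) lie in the span of \(g=q\eta\) and \(h\) together with \(qS\), the set \(1,h,\eta\) is a \(\Z_q\)-basis of \(S'\).

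Next compute the products in \(S'\):
\begin{itemize}
\item \(h^2=q(\epsilon_0+\epsilon_1 q\eta+\epsilon_2 h)\in qS'\);
\item \(h\eta=gh/q=\delta_0+\delta_1 q\eta+\delta_2 h\equiv \delta_0+\delta_2h\pmod{qS'}\);
\item \(\eta^2=g^2/q^2=\gamma_0^++\gamma_1\eta+\gamma_2^+h\).
\end{itemize}
To get \(\bar h\bar\eta=\bar\delta_2\bar h\), I need \(\bar\delta_0=0\). I will obtain this, together with \eqref{eq:universal-index-q-overorder-root}, from associativity \((h\eta)\eta=h(\eta^2)\), which I expect to be the main bookkeeping point. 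Alternatively, one can use that \(S'\) is a ring and that \(\delta_0\) must be divisible by \(q\). This follows by multiplying \(gh\) by \(g\) and comparing with \(g^2h\) modulo \(q^2\): write \(g^2h=q(\gamma_0 h+\gamma_1 gh+\gamma_2h^2)\), expand both sides in the basis \(1,g,h\), and read off the coefficients.

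With \(\bar\delta_0=0\), associativity in \(S'/qS'\) gives
\[
\bar h\bar\eta^2=\bar\delta_2^2\bar h,\qquad \bar h(\bar\gamma_0^++\bar\gamma_1\bar\eta+\bar\gamma_2^+\bar h)=(\bar\gamma_0^++\bar\gamma_1\bar\delta_2)\bar h .
\]
Comparing the two sides yields \eqref{eq:universal-index-q-overorder-root}.

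\textbf{Step 3: the two cases.} Consider \(\bar\eta\) in the quotient \(S'/(qS'+\Z_q h)\cong\mathbb F_q[T]/(P)\).

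In case \textup{(a)}, \(\bar\delta_2\) is a simple root of \(P\), so \(P\) has two distinct roots in \(\mathbb F_q\). The algebra \(S'/qS'\) then has a nontrivial idempotent: take the idempotent in \(\mathbb F_q[\bar\eta]\) separating the two roots, noting that \(\bar h\) is nilpotent. Lifting it by Hensel's lemma, as in the proof of Proposition~\ref{prop:gorenstein-cubic-monogenic}, gives \(S'\simeq\Z_q\times B\).

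In case \textup{(b)}, \(\bar\delta_2\) is a double root, so \(P(T)=(T-\bar\delta_2)^2\). Substituting \(\bar\eta=\bar\varepsilon+\bar\delta_2\) into \eqref{eq:universal-index-q-overorder-residual-table} and using \(P(\bar\delta_2)=P'(\bar\delta_2)=0\) gives \eqref{eq:universal-multiple-root-index-q-overorder}.

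\begin{itemize}
\item If \(\bar\gamma_2^+\ne0\), then \(\bar h\in\mathbb F_q\bar\varepsilon^2\), so \(\bar\varepsilon\) generates and \(\bar\varepsilon^3=\bar\gamma_2^+\bar h\bar\varepsilon=0\). Nakayama's lemma then shows \(S'\) is monogenic. Since \(\mathbb F_q[\bar\varepsilon]/(\bar\varepsilon^3)\) has one-dimensional socle, \(S'/qS'\) is Gorenstein, and the regular-element criterion of \cite{BrunsHerzog} makes \(S'\) Gorenstein.
\item If \(\bar\gamma_2^+=0\), the table shows directly that \((\bar h,\bar\varepsilon)^2=0\), so \(S'\) again has square-zero special fiber.
\end{itemize}
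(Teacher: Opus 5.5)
Your proposal is correct and follows essentially the same route as the paper. You pass to the basis \(1,g,h\) of \(S\) and write the exact products of \(1,h,\eta\) in \(S'\); associativity then gives \(\bar\delta_0=0\) and \(\bar\delta_2^2-\bar\gamma_1\bar\delta_2-\bar\gamma_0^+=0\). You finish by splitting on \(P'(\bar\delta_2)\), with \(\bar\varepsilon=\bar\eta-\bar\delta_2\) and Nakayama's lemma.

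The differences from the paper are only cosmetic. You read \(\bar\delta_0=0\) off the \(\bar\eta\)-coefficient of \((\bar h\bar\eta)\bar\eta=\bar h\bar\eta^2\), which does work; the paper instead multiplies \(\bar h\bar\eta=\bar\delta_0+\bar\delta_2\bar h\) by \(\bar h\). In case (a) you obtain the idempotent from \(\mathbb F_q[T]/(P)\simeq\mathbb F_q\times\mathbb F_q\), lifted over the square-zero ideal \(\mathbb F_q\bar h\), rather than from the paper's explicit formula.
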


\begin{proof}
The condition \eqref{eq:universal-square-zero-branching} says precisely
that, modulo \(q\), the class of \(q^{-1}g^2\) has zero scalar component and
lies in the line spanned by \(g\).  In the basis \(1,g,h\), this is
equivalent to \(\gamma_0,\gamma_2\in q\Z_q\).

Since \(g=q\eta\), the three product formulae in the statement give exact
relations in \(S'\):
\[
h^2=q\epsilon_0+q^2\epsilon_1\eta+q\epsilon_2h,
\]
\[
h\eta=\delta_0+q\delta_1\eta+\delta_2h,
\]
and
\[
\eta^2=\gamma_0^++\gamma_1\eta+\gamma_2^+h.
\]
Modulo \(q\), the first relation gives \(\bar h^2=0\).  The second relation
gives
\(\bar h\bar\eta=\bar\delta_0+\bar\delta_2\bar h\); multiplying by
\(\bar h\) yields \(\bar\delta_0\bar h=0\).  The elements
\(1,\bar h,\bar\eta\) form an \(\mathbb F_q\)-basis of \(S'/qS'\), so
\(\bar h\ne0\).  Since \(\bar\delta_0\in\mathbb F_q\), any nonzero
\(\bar\delta_0\) would be a unit in \(S'/qS'\); hence
\(\bar\delta_0=0\).  This gives
\eqref{eq:universal-index-q-overorder-residual-table}.

By associativity,
\[
\bar h(\bar\eta^2)=(\bar h\bar\eta)\bar\eta .
\]
Using \eqref{eq:universal-index-q-overorder-residual-table}, the left side is
\[
(\bar\gamma_0^++\bar\gamma_1\bar\delta_2)\bar h,
\]
while the right side is \(\bar\delta_2^2\bar h\).  This proves
\eqref{eq:universal-index-q-overorder-root}, so \(\bar\delta_2\) is a root of
\(P\).

Put \(\bar\varepsilon=\bar\eta-\bar\delta_2\).  Since
\(\bar h\bar\eta=\bar\delta_2\bar h\), one has
\[
\bar h\bar\varepsilon=0.
\]
Also, using \(P(\bar\delta_2)=0\),
\[
\bar\varepsilon^2
=
(\bar\gamma_1-2\bar\delta_2)\bar\varepsilon
+\bar\gamma_2^+\bar h .
\]
If \(P'(\bar\delta_2)=2\bar\delta_2-\bar\gamma_1\ne0\), then
\(a:=\bar\gamma_1-2\bar\delta_2\) is nonzero.  The element
\[
e:=a^{-1}\bar\varepsilon+a^{-2}\bar\gamma_2^+\bar h
\]
satisfies \(e^2=e\), and it is neither \(0\) nor \(1\).  Thus
\(S'/qS'\) has a nontrivial idempotent.  Idempotents lift over the complete
local base \(\Z_q\), so \(S'\) decomposes as \(\Z_q\times B\), with \(B\)
quadratic.

If \(P'(\bar\delta_2)=0\), then the same formula reduces to
\[
\bar\varepsilon^2=\bar\gamma_2^+\bar h,
\]
which is \eqref{eq:universal-multiple-root-index-q-overorder}.  If
\(\bar\gamma_2^+\ne0\), then
\[
\bar h=(\bar\gamma_2^+)^{-1}\bar\varepsilon^2
\]
and \(\bar\varepsilon^3=0\), so the residual algebra is
\(\mathbb F_q[\bar\varepsilon]/(\bar\varepsilon^3)\).  Nakayama's lemma
then gives \(S'=\Z_q[\varepsilon]\), so \(S'\) is monogenic, hence
Gorenstein.  If \(\bar\gamma_2^+=0\), then
\eqref{eq:universal-multiple-root-index-q-overorder} says exactly that
the maximal ideal spanned by \(\bar h,\bar\varepsilon\) has square zero.
\end{proof}

The preceding two propositions give the first layer of the overorder
recursion.  We record their combined conclusion in the following form.

\begin{corollary}
\label{cor:square-zero-finite-induction}
Let \(S\) have square-zero special fiber, and retain the notation of
Proposition~\ref{prop:universal-square-zero-branching}.  Its index-\(q\)
overorders are precisely the orders \(S_{[A:B]}\) attached to the points
\([A:B]\in\mathbb P^1(\mathbb F_q)\) satisfying
\eqref{eq:universal-square-zero-branching}.  These overorders fall into
exactly three mutually exclusive classes:
\begin{enumerate}[label=\textup{(\roman*)}, leftmargin=2em]
\item \(S'\simeq\Z_q\times B\) for a quadratic \(\Z_q\)-order \(B\);
\item \(S'\) is monogenic and Gorenstein, with
\[
S'/qS'\simeq\mathbb F_q[\epsilon]/(\epsilon^3);
\]
\item \(S'\) again has square-zero special fiber.
\end{enumerate}
Thus the third class is the only one for which the same square-zero
first-layer classification must be repeated.
\end{corollary}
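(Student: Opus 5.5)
The corollary is a repackaging of Propositions~\ref{prop:universal-square-zero-branching} and~\ref{prop:universal-square-zero-index-q-overorder-type}, so the plan is to chain them. First, by Proposition~\ref{prop:universal-square-zero-branching}, every index-\(q\) overorder of \(S\) has the form \(S_{[A:B]}=S+\Z_q g_{A,B}/q\) for a unique point \([A:B]\in\mathbb P^1(\mathbb F_q)\) satisfying \eqref{eq:universal-square-zero-branching}. Conversely, every such point yields an index-\(q\) overorder. This proves the first sentence of the corollary.

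Next, fix such a point and set \(g=Ax+By\). Complete \(\bar g\) to a basis \(\bar g,\bar h\) of \(M\), for instance by taking \(h=x\) or \(h=y\) according to which of \(B,A\) is a unit mod \(q\). Writing out the structure constants as in Proposition~\ref{prop:universal-square-zero-index-q-overorder-type}, that proposition places \(S'=S_{[A:B]}\) in one of three outcomes:
\begin{itemize}
\item[] \(P'(\bar\delta_2)\neq0\), giving \(S'\simeq\Z_q\times B\), which is class (i);
\item[] \(P'(\bar\delta_2)=0\) and \(\bar\gamma_2^+\neq0\), giving \(S'/qS'\simeq\mathbb F_q[\bar\varepsilon]/(\bar\varepsilon^3)\) with \(S'\) monogenic and Gorenstein, which is class (ii);
\item[] \(P'(\bar\delta_2)=0\) and \(\bar\gamma_2^+=0\), giving square-zero special fiber, which is class (iii).
\end{itemize}
These three outcomes exhaust all possibilities for \(S'\).

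The remaining point, and the only one needing thought, is mutual exclusivity. Since these labels are defined via data depending on the choice of \(h\), I would argue intrinsically using the isomorphism type of \(S'/qS'\).
\begin{itemize}
\item[] In class (i), \(S'/qS'\) has a nontrivial idempotent, so it is not local.
\item[] In classes (ii) and (iii), \(S'/qS'\) is local: it has residue field \(\mathbb F_q\) and a nilpotent maximal ideal spanned by \(\bar h,\bar\varepsilon\). Hence (i) is disjoint from (ii) and (iii).
\item[] Classes (ii) and (iii) are distinguished by whether the square of the maximal ideal of \(S'/qS'\) vanishes. In \(\mathbb F_q[\epsilon]/(\epsilon^3)\) the maximal ideal has nonzero square \((\epsilon^2)\). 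In class (iii) it is zero. Equivalently, (iii) is non-Gorenstein, as in the proof of Proposition~\ref{prop:first-overorder-single-residue}(c), whereas (ii) is Gorenstein.
\end{itemize}

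The final sentence of the corollary follows. Class (i) is handled by the quadratic theory, and class (ii) returns to the monogenic Gorenstein setting, so only class (iii) reproduces the hypothesis of the corollary. I expect no real obstacle beyond making the exclusivity argument basis-free.
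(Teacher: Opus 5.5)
Your proposal is correct and follows the paper's proof. Both chain Proposition~\ref{prop:universal-square-zero-branching}, which lists the index-\(q\) overorders, with Proposition~\ref{prop:universal-square-zero-index-q-overorder-type}, which gives the trichotomy, and both get mutual exclusivity intrinsically from the special fiber \(S'/qS'\) being nonlocal, local Gorenstein, or local non-Gorenstein. You separate (ii) from (iii) by whether the maximal ideal of \(S'/qS'\) squares to zero, which is equivalent to the paper's Gorenstein criterion.
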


\begin{proof}
Proposition~\ref{prop:universal-square-zero-branching} lists exactly the
index-\(q\) overorders of \(S\), and
Proposition~\ref{prop:universal-square-zero-index-q-overorder-type} gives
the three alternatives above.  They are mutually exclusive: the first
has nonlocal special fiber, the second has local Gorenstein special fiber,
and the third has local non-Gorenstein special fiber.  Only the last
alternative remains in the class of orders with square-zero special fiber.
\end{proof}

Corollary~\ref{cor:square-zero-finite-induction} is the starting point,
rather than the whole, of the recurrence.  The product case reduces to the
quadratic comparison in Proposition~\ref{prop:local-base-cases}, and
the second case returns to the monogenic Gorenstein class treated by
induction.  The third case is the genuinely repeated square-zero case.
Moreover, as Remark~\ref{rmk:higher-index-covers} explains, a finite-index
overorder need not be reachable through a chain of index-\(q\) overorders;
Section~\ref{sec:overorder-recursion} incorporates these higher-index
overorders into the full recurrence.

\begin{remark}\label{rmk:poonen-residual-types}
Put \(k=\overline{\mathbb F}_q\).  After extension of scalars from
\(\mathbb F_q\) to \(k\), the local rank-three algebras occurring in the
preceding classification are precisely the two rank-three entries in
Poonen's table of local finite-rank algebras
\cite[Table~1]{Poonen}.  If \(\mathfrak m\) denotes the
maximal ideal and
\[
\vec d=\bigl(\dim_k\mathfrak m^i/\mathfrak m^{i+1}\bigr)_{i\geq 1},
\]
they are
\[
k[\epsilon]/(\epsilon^3),\qquad \vec d=(1,1),
\]
and
\[
k[\epsilon_1,\epsilon_2]/(\epsilon_1,\epsilon_2)^2,
\qquad \vec d=(2).
\]
These are, respectively, the curvilinear (equivalently, monogenic) type
and the square-zero type used above.  The nonlocal possibilities for
\((S/qS)\otimes_{\mathbb F_q}k\) are products of lower-rank local
algebras, namely \(k^3\) and
\(k\times k[\epsilon]/(\epsilon^2)\).
\end{remark}

\begin{remark}\label{rmk:higher-index-covers}
Corollary~\ref{cor:square-zero-finite-induction} describes only the
index-\(q\) overorders of \(S\).  It does not assert that every finite-index
overorder \(T\supset S\) admits a chain
\[
S=S_0\subset S_1\subset\cdots\subset S_r=T,
\qquad [S_i:S_{i-1}]=q\quad(1\le i\le r).
\]
If this chain property held, repeated application of
Corollary~\ref{cor:square-zero-finite-induction}, together with the product
and monogenic cases, would recover the entire overorder interval from its
index-\(q\) layers.  This is what happens for a quadratic order
\(\CO_n\) (see the proof of Proposition \ref{prop:local-base-cases}): its overorders are the linearly ordered sequence
\[
\CO_n\subset\CO_{n-1}\subset\cdots\subset\CO_0,
\qquad [\CO_{i-1}:\CO_i]=q,
\]
by \cite[Proposition~3.10 and Corollary~3.11]{Malors}.  The resulting
quadratic recursion therefore proceeds along a single chain.

The chain property fails for cubic orders.  For example, retain the notation of
Proposition~\ref{prop:square-zero-index-q-overorders} and suppose that
\(\Phi_{S_1}\) has no zero in \(\mathbb P^1(\mathbb F_q)\).  Then \(S_1\)
has no index-\(q\) overorder.  Nevertheless,
\[
U:=\Z_q\left\langle 1,\frac{\beta}{q},\frac{\delta}{q}\right\rangle
\]
is an overorder of \(S_1\):
\[
\left(\frac{\beta}{q}\right)^2
=\frac{\delta}{q}-u\frac{\beta}{q}-v_1,
\qquad
\frac{\beta}{q}\frac{\delta}{q}=-w_2,
\qquad
\left(\frac{\delta}{q}\right)^2
=v_1\frac{\delta}{q}-w_2\frac{\beta}{q}-uw_2.
\]
Here \([U:S_1]=q^2\).  Thus the first index-\(q\) layer can be empty even
though the full overorder interval is not.  Consequently, the cubic
recursion cannot be reconstructed by iterating the index-\(q\)
classification alone: it must also include such higher-index jumps. This is the essential structural difference from
the quadratic, single-chain recurrence. For further discussions, see Section \ref{sec:overorder-recursion}. 
\end{remark}

\section{Containment and Hermite normal form}
\label{sec:containment-hnf}

Let \(A\) be in the nonmaximal triple-root situation of
Proposition~\ref{prop:first-overorder-single-residue}, and let \(S_1\) be
its unique index-\(q\) overorder.  Proposition
\ref{prop:first-overorder-contained} proves that every strict overorder of
\(A\) contains \(S_1\).  Since \([S_1:A]=q\), the sum over strict
overorders in \eqref{eq:local-overorder-t-series} is then
\(qt^2\widetilde L_{S_1}(t)\).  In the square-zero first-overorder case of
Definition~\ref{def:square-zero-special-fiber}, this is the containment
used in the induction of Section~\ref{sec:proof-local-comparison}.

We then parametrize the ideals of an arbitrary monogenic cubic order by
Hermite normal form, obtain the three explicit congruences
\eqref{eq:general-hnf-1}--\eqref{eq:general-hnf-3}, and specialize them to
the square-zero case.

\begin{proposition}\label{prop:first-overorder-contained}
Let
\[
A=\Z_q[\beta]\simeq\Z_q[X]/(F),\qquad
F(X)=X^3+quX^2+qvX+q^2w_1.
\]
Let
\[
\vartheta:=\beta^2+qu\beta+qv,
\qquad S_1:=A+\Z_q\frac{\vartheta}{q}.
\]
Then every strict overorder of \(A\) contains \(S_1\).
\end{proposition}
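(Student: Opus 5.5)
Let \(T\supsetneq A\) be a strict overorder inside \(\CO_E\). The plan is to show that \(\vartheta/q\in T\), which gives \(S_1\subseteq T\).

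\emph{Step 1: find an element of order exactly \(q\).} Since \(T/A\) is a nonzero finite \(\Z_q\)-module, it contains an element of order exactly \(q\). So there is \(x\in T\setminus A\) with \(qx\in A\). Write \(y:=qx\in A\setminus qA\), and consider the \(A\)-submodule \(I:=A\cap qT\). It satisfies
\[
qA\subseteq I\subseteq A,\qquad I\supsetneq qA,
\]
since \(y\in I\setminus qA\). It also satisfies \(I^2\subseteq q^2T\cap A\subseteq qI\), because \(I^2\subseteq A\) and \(I^2\subseteq q^2 T\), and \(q^2T\cap A\subseteq q(qT\cap q^{-1}A)\). This last inclusion needs care, so I will use the following cleaner variant instead. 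The nonzero \(A/qA\)-submodule \(\bar I:=I/qA\) of \(A/qA\simeq\mathbb F_q[\bar\beta]/(\bar\beta^3)\) is an ideal of this local ring. Every nonzero ideal of \(\mathbb F_q[\bar\beta]/(\bar\beta^3)\) contains its socle \(\mathbb F_q\bar\beta^2\), because the ideals form a chain \((\bar\beta^2)\subset(\bar\beta)\subset(1)\). Therefore \(\bar\beta^2\in\bar I\), and so \(\vartheta\equiv\beta^2\pmod{qA}\) lies in \(I\subseteq qT\). This gives \(\vartheta/q\in T\).

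\emph{Step 2: check the two facts used.} The first fact is that \(I=A\cap qT\) is an \(A\)-module. This holds because both \(A\) and \(qT\) are \(A\)-modules. The second fact is that \(\bar I\neq0\), which follows from \(y\in I\setminus qA\). Note also that \(\vartheta\equiv\beta^2\) modulo \(qA\) holds because \(qu\beta+qv\in qA\). No square-zero condition on \(I\) is needed at all, so the delicate inclusion from Step 1 can be dropped. The argument uses only that \(A/qA\) is a local ring whose ideals form a chain with minimal ideal \((\bar\beta^2)\). This is where the hypothesis \(\overline F=X^3\) enters, since it gives \(A/qA\simeq\mathbb F_q[X]/(X^3)\).

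\emph{Expected obstacle.} The only subtle point is ensuring that \(\bar I\) is an ideal of \(A/qA\) and not just a subspace. This holds because \(qT\) is stable under multiplication by \(A\subseteq T\). With that settled, the uniqueness result of Proposition~\ref{prop:first-overorder-single-residue} is not even needed; the containment follows directly. I would also remark that the hypothesis \(q^2\mid F(0)\) is only used to guarantee that \(S_1\) is a ring. The containment \(\vartheta/q\in T\) itself holds for any strict overorder.
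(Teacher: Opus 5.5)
Your proof is correct, and it takes a genuinely different route from the paper's.

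\textbf{The paper's route.} It first splits into cases (a)/(b)/(c) of Proposition~\ref{prop:first-overorder-single-residue} and disposes of (a) and (b) by the maximality of $S_1$. In case (c) it picks $x\in(T\cap q^{-1}A)\setminus A$ and writes $qx=c+r\beta+s\vartheta$. It uses the integrality of $\beta/q$ and $\vartheta/q$ to force $c\in q\Z_q$. Then it either reads off $\vartheta/q\in T$ directly (when $q\mid r$), or extracts it from $qx^2=r^2\vartheta/q+a$ using the explicit relations for $\beta^2$, $\beta\vartheta$, $\vartheta^2$ coming from $F(\beta)=0$.

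\textbf{Your route.} You replace this coordinate computation with one structural observation. The module $(A\cap qT)/qA$ is a nonzero ideal of $A/qA\simeq\mathbb F_q[X]/(X^3)$. In this chain ring every nonzero ideal contains the socle $\mathbb F_q\bar\beta^2$, hence $\vartheta\in qT$.

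\textbf{What each approach buys.} Your argument needs no case distinction, no integrality of $\beta/q$, no multiplication relations, and not even the hypothesis $q^2\mid F(0)$. It also never uses that $T$ is closed under multiplication. So it proves the stronger fact that every $A$-submodule $T\supsetneq A$ of $E$ contains $\vartheta/q$. Verbatim, it shows more generally that whenever $A/qA$ is local with one-dimensional socle, every such $T$ contains $q^{-1}$ times a lift of that socle. The paper's proof, by contrast, stays inside the explicit multiplication-table framework used throughout the rest of the argument.

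The aside in your Step~1 about $I^2\subseteq qI$ is not needed and is not justified as written; simply delete it rather than leaving it as an abandoned claim.
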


\begin{proof}
Proposition~\ref{prop:first-overorder-single-residue} identifies \(S_1\)
as the unique index-\(q\) overorder of \(A\) and gives three cases.  In
cases \textup{(a)} and \textup{(b)} of that proposition, \(S_1\) is
maximal, hence \(S_1=\CO_E\).  Since \([S_1:A]=q\), every strict
overorder of \(A\) is then equal to \(S_1\).

It remains to consider case \textup{(c)}.  Write
\[
v=qv_1,
\qquad
w_1=qw_2.
\]
Then \(\alpha:=\beta/q\) is integral, because it satisfies the monic
polynomial
\[
\frac{F(qT)}{q^3}=T^3+uT^2+v_1T+w_2\in\Z_q[T].
\]
Moreover,
\[
\frac{\vartheta}{q}=q(\alpha^2+u\alpha+v_1)
\]
is integral.

Let \(T\supsetneq A\) be an overorder.  Choose \(y\in T\setminus A\) and
the least \(m\ge1\) such that \(q^my\in A\).  Then
\(x=q^{m-1}y\) belongs to \(T\cap q^{-1}A\) but not to \(A\).   Since \(qx\in A\), there
are unique \(c,r,s\in\Z_q\) such that
\[
qx=c+r\beta+s\vartheta.
\]
Since \(x\), \(\alpha=\beta/q\),
and \(\vartheta/q\) are integral, the preceding equality gives
\[
\frac cq=x-r\alpha-s\frac{\vartheta}{q}.
\]
Thus \(c/q\in\Q_q\) is integral over \(\Z_q\), so \(c/q\in\Z_q\) and
\(c\in q\Z_q\).  After subtracting this scalar from \(x\), we assume
\[
x=\frac{r\beta+s\vartheta}{q}.
\]

If \(r\in q\Z_q\), then \(s\) is a unit because \(x\notin A\), and it follows immediately that \(\vartheta/q\in T\).  Suppose that \(r\)
is a unit.  The relations
\[
\frac{\beta^2}{q}=\frac{\vartheta}{q}-u\beta-v,\qquad
\frac{\beta\vartheta}{q}=-qw_1,
\]
and
\[
\frac{\vartheta^2}{q}
=v\vartheta-qw_1\beta-q^2uw_1
\]
follow directly from \(F(\beta)=0\).  Consequently
\[
qx^2=r^2\frac{\vartheta}{q}+a
\qquad\text{for some }a\in A.
\]
Since \(x\in T\), the left side belongs to \(T\); since \(r\) is a unit,
this again implies \(\vartheta/q\in T\).  Thus every strict overorder
contains \(S_1=A[\vartheta/q]\).
\end{proof}

Let
\[
A=\Z_q[\beta]\simeq \Z_q[X]/(F(X)),
\qquad
F(X)=X^3+c_2X^2+c_1X+c_0,
\]
where \(c_0,c_1,c_2\in\Z_q\) and \(F\) has nonzero discriminant.

\begin{definition}\label{def:single-residue-square-zero-hnf-sets}
For \(r,s\ge0\), let \(\mathcal S_A(r,s)\) be the set of triples
\[
(U,W,V)\in
(\Z_q/q^r\Z_q)\times(\Z_q/q^s\Z_q)\times(\Z_q/q^{r+s}\Z_q)
\]
satisfying
\begin{align}
V-W^2+c_2W-c_1&\equiv0\pmod {q^s},
\label{eq:general-hnf-1}\\
V+U(U-W)&\equiv0\pmod {q^r},
\label{eq:general-hnf-2}\\
-c_0+c_1U+c_2V-c_2UW-V(U+W)+UW^2
&\equiv0\pmod {q^{r+s}}.
\label{eq:general-hnf-3}
\end{align}
Congruences modulo \(q^0\) impose no condition.
\end{definition}

\begin{proposition}\label{prop:single-residue-square-zero-hnf}
One has
\begin{equation}\label{eq:general-single-residue-hnf-series}
J_A(s)=
\frac{1}{1-t^3}
\sum_{r,s\ge0}\#\mathcal S_A(r,s)t^{r+2s}.
\end{equation}
\end{proposition}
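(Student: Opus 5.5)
Since \(A=\Z_q[\beta]\) is monogenic, \(A\) is Gorenstein and \(A^\vee\) is an invertible \(A\)-module. Choosing a generator of \(A^\vee\) therefore identifies \(J_A(s)\) with the ideal-counting series \(\sum_{I}[A:I]^{-s}\), the sum running over the finite-index ideals \(I\subseteq A\). This is the same reduction used in the proofs of Proposition~\ref{prop:local-base-cases} and Corollary~\ref{cor:first-overorder-terminal-cases}. A finite-index ideal is the same thing as a full \(\Z_q\)-lattice \(I\subseteq A\) with \(\beta I\subseteq I\). I will parametrize such lattices by their Hermite normal form with respect to the basis \(1,\beta,\beta^2\), and then translate the condition \(\beta I\subseteq I\) into congruences.

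\emph{Step 1: HNF.} Write the diagonal exponents of \(I\) as \(q^{d_0},q^{d_1},q^{d_2}\) in the directions \(1,\beta,\beta^2\). The inclusion \(\beta I\subseteq I\), applied to the generators, gives \(d_0\ge d_1\ge d_2\). I intend to show that \(I=q^{d_2}I'\), where \(I'\) is again an ideal and has \(d_2(I')=0\). This is done by checking inductively that every entry of the HNF basis is divisible by \(q^{d_2}\), using \(\beta I\subseteq I\) and the monicity of \(F\). Since \([A:q^mI']=q^{3m}[A:I']\), this step produces the factor \((1-t^3)^{-1}\).

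For the ideals with \(d_2=0\), put \(s=d_1\) and \(r=d_0-d_1\). Such an ideal then has a unique HNF basis
\[
e_1=q^{r+s},\qquad e_2=q^s(\beta-U),\qquad e_3=\beta^2-W\beta+V',
\]
with \(U\in\Z_q/q^r\), \(W\in\Z_q/q^s\) and \(V'\in\Z_q/q^{r+s}\). The form of the \(\beta\)-component of \(e_2\) uses the fact that \(\beta e_1\in I\) forces the constant term of \(e_2\) to be divisible by \(q^s\). Its index is \(q^{r+2s}\), which accounts for the weight \(t^{r+2s}\).

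\emph{Step 2: congruences.} The only remaining conditions are \(\beta e_2\in I\) and \(\beta e_3\in I\); the condition \(\beta e_1\in I\) is automatic once the basis has the form above. Reduce \(\beta e_3=\beta^3-W\beta^2+V'\beta\) using \(\beta^3=-c_2\beta^2-c_1\beta-c_0\). Subtract the appropriate multiple of \(e_3\) to remove the \(\beta^2\)-term, then the appropriate multiple of \(e_2\), and require the remaining constant to lie in \(q^{r+s}\Z_q\). This yields one congruence modulo \(q^s\) (from the \(\beta\)-coefficient) and one modulo \(q^{r+s}\). Treating \(\beta e_2=q^s(\beta^2-U\beta)\) in the same way yields a congruence modulo \(q^r\), after dividing out \(q^s\). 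After the affine change of variable from \(V'\) to \(V\) that makes the first condition read \(V-W^2+c_2W-c_1\equiv0\pmod{q^s}\), I expect the three conditions to become exactly \eqref{eq:general-hnf-1}, \eqref{eq:general-hnf-2} and \eqref{eq:general-hnf-3}. I will also verify that each condition depends only on the residues \(U\bmod q^r\), \(W\bmod q^s\) and \(V\bmod q^{r+s}\).

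\emph{Main obstacle.} The delicate point is the divisibility claim in Step~1, namely that \(d_2\ge1\) forces \(I\subseteq q^{d_2}A\). It is not clear a priori that an ideal whose \(\beta^2\)-exponent is positive must be divisible by that power of \(q\). My first attempt is a direct lattice argument: if some element of \(I\) had a non-divisible lower coefficient, then multiplying it by \(\beta\) and \(\beta^2\) and reducing with \(F\) would produce a \(\beta^2\)-coefficient of valuation less than \(d_2\), a contradiction. If this fails in general, I would instead organize the count by the last diagonal exponent directly. The aim would then be to show that the fibres of the map \(I\mapsto(d_0-d_2,\,d_1-d_2)\) have the same size as in the case \(d_2=0\), which would still produce the geometric factor \((1-t^3)^{-1}\).
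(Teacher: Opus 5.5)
Your proposal is correct and follows essentially the same route as the paper. You reduce to ideal counting via a generator of \(A^\vee\), write the ideals in Hermite normal form in the basis \(1,\beta,\beta^2\), split off the free \(\beta^2\)-exponent to get \((1-t^3)^{-1}\), and turn \(\beta e_2,\beta e_3\in I\) into \eqref{eq:general-hnf-1}--\eqref{eq:general-hnf-3}. These congruences come out exactly once your \(U,W\) are replaced by \(-U,-W\); no change of \(V\) is needed.

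The step you flag as the main obstacle is harmless, and your first argument already settles it. For \(x=x_0+x_1\beta+x_2\beta^2\in I\), the \(\beta^2\)-coordinates of \(x,\beta x,\beta^2x\) are \(x_2\), \(x_1-c_2x_2\), and \(x_0-c_2x_1+(c_2^2-c_1)x_2\). All three lie in \(q^{d_2}\Z_q\), and this is a unitriangular transform of \((x_2,x_1,x_0)\), so \(I\subseteq q^{d_2}A\). In the paper the same divisibility simply falls out of \(\beta e_1,\beta e_2\in I\): these conditions already force all HNF entries to be divisible by \(q^{d_2}\).
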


\begin{proof}
Since \(A\) is monogenic, after multiplying the trace dual
by a generator, \(J_A(s)\) counts the ideals of \(A\) by colength.
Use the basis \((1,\beta,\beta^2)\).  Multiplication by \(\beta\) has
matrix
\[
C_F=
\begin{pmatrix}
0&0&-c_0\\
1&0&-c_1\\
0&1&-c_2
\end{pmatrix}.
\]
Write an arbitrary colength-\(n\) lattice in Hermite normal form as
\[
H=
\begin{pmatrix}
q^a&p&r_0\\
0&q^b&s_0\\
0&0&q^c
\end{pmatrix},
\qquad a,b,c\ge0,\quad a+b+c=n.
\]
The condition that the lattice be an ideal is
\begin{equation}\label{eqn: integrality-C_F}
H^{-1}C_FH\in M_3(\Z_q).
\end{equation}
The entries independent of the last column first force
\[
a\ge b\ge c,\qquad p=q^bU,\qquad s_0=q^cW,\qquad r_0=q^cV .
\]
Put
\[
r=a-b,\qquad s=b-c .
\]
Then \(U,W,V\) are residue classes modulo \(q^r,q^s,q^{r+s}\),
respectively.  Substituting the expressions for \(p,s_0,r_0\)
into the remaining three integrality conditions of \eqref{eqn: integrality-C_F} gives exactly
\eqref{eq:general-hnf-1}, \eqref{eq:general-hnf-2}, and
\eqref{eq:general-hnf-3}.  Conversely these congruences make
\(H^{-1}C_FH\) integral, so they parametrize the ideals with fixed
\((r,s,c)\).

For fixed \(r,s\), the parameter \(c\ge0\) is free and contributes
colength
\[
a+b+c=r+2s+3c.
\]
Summing over \(c\) gives the factor \((1-t^3)^{-1}\), proving
\eqref{eq:general-single-residue-hnf-series}.
\end{proof}

For the rest of the section, assume that \(A\) is in the square-zero
first-overorder case of Definition~\ref{def:square-zero-special-fiber}.
Thus
\begin{equation}\label{eq:general-square-zero-polynomial}
F(X)=X^3+quX^2+q^2vX+q^3w,
\qquad u,v,w\in\Z_q .
\end{equation}

\begin{definition}\label{def:general-square-zero-peeled-hnf-sets}
For \(a,b\ge0\),
let \(\mathcal T_A(a,b)\) be the set of triples
\[
(U,W,V)\in
(\Z_q/q^a\Z_q)\times(\Z_q/q^b\Z_q)\times(\Z_q/q^{a+b}\Z_q)
\]
satisfying
\begin{align}
V-q(W^2-uW+v)&\equiv0\pmod {q^b},
\label{eq:general-T-1}\\
V+qU(U-W)&\equiv0\pmod {q^a},
\label{eq:general-T-2}\\
uV-V(U+W)+q(-w+vU-uUW+UW^2)
&\equiv0\pmod {q^{a+b}}.
\label{eq:general-T-3}
\end{align}
\end{definition}

The sets \(\mathcal T_A(a,b)\) have a direct interpretation in the same
Hermite normal form calculation.  For \(a,b,d\ge0\), regard
\(U,W,\widetilde V\in\Z_q\) modulo \(q^a,q^b,q^{a+b+1}\), respectively,
and put
\[
H_{a,b,d}(U,W,\widetilde V):=
q^d
\begin{pmatrix}
q^{a+b+2}&q^{b+2}U&q\widetilde V\\
0&q^{b+1}&qW\\
0&0&1
\end{pmatrix}.
\]
This is the interior HNF matrix in the proof of
Proposition~\ref{prop:single-residue-square-zero-hnf}, with
\(r=a+1\), \(s=b+1\), and with its three off-diagonal parameters written
as \(qU,qW,q\widetilde V\).  Since the scalar \(q^d\) cancels from the
integrality condition, direct substitution gives
\[
H_{a,b,d}(U,W,\widetilde V)^{-1}C_F
H_{a,b,d}(U,W,\widetilde V)\in M_3(\Z_q)
\quad\Longleftrightarrow\quad
(U,W,V)\in\mathcal T_A(a,b),
\]
where \(V\equiv\widetilde V\pmod {q^{a+b}}\).  More precisely, after
division by \(q,q,q^2\), the three remaining integrality conditions are
\eqref{eq:general-T-1}, \eqref{eq:general-T-2}, and
\eqref{eq:general-T-3}, respectively.  Thus \(\mathcal T_A(a,b)\)
parametrizes the normalized interior HNF data.  For fixed \(a,b,d\), each
value of \(V\) modulo \(q^{a+b}\) has exactly \(q\) lifts
\(\widetilde V\) modulo \(q^{a+b+1}\), which accounts for the factor
\(q\) in the following proposition.

\begin{proposition}\label{prop:general-square-zero-first-peeling}
For \(r,s\ge1\), 
\begin{equation}\label{eq:general-square-zero-first-peeling}
\#\mathcal S_A(r,s)=q\,\#\mathcal T_A(r-1,s-1).
\end{equation}
\end{proposition}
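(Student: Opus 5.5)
The plan is to show that every triple in \(\mathcal S_A(r,s)\) has all three coordinates divisible by \(q\). Once this is known, dividing out this common factor of \(q\) turns the defining congruences of \(\mathcal S_A(r,s)\) into those of \(\mathcal T_A(r-1,s-1)\). Throughout we use the square-zero shape \eqref{eq:general-square-zero-polynomial}, that is,
\[
c_2=qu,\qquad c_1=q^2v,\qquad c_0=q^3w .
\]

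\emph{Step 1: reduction modulo \(q\).} Since \(r,s\ge1\) and \(r+s\ge2\), each of \eqref{eq:general-hnf-1}--\eqref{eq:general-hnf-3} implies its reduction modulo \(q\). In \(\mathbb F_q\) these reductions read
\[
\bar V=\bar W^2,\qquad \bar V+\bar U^2-\bar U\bar W=0,\qquad -\bar V(\bar U+\bar W)+\bar U\bar W^2=0 .
\]
Substituting the first relation into the third gives \(-\bar W^3=0\), so \(\bar W=0\). The first relation then gives \(\bar V=0\), and the second gives \(\bar U^2=0\), so \(\bar U=0\). Hence we may write
\[
U=qU',\qquad W=qW',\qquad V=qV',
\]
with \(U'\) modulo \(q^{r-1}\), \(W'\) modulo \(q^{s-1}\), and \(V'\) modulo \(q^{r+s-1}\).

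\emph{Step 2: dividing the congruences.} We substitute these expressions and divide each congruence by the appropriate power of \(q\).
\begin{itemize}
\item Dividing \eqref{eq:general-hnf-1} by \(q\) gives \(V'-q(W'^2-uW'+v)\equiv0\pmod{q^{s-1}}\). This is \eqref{eq:general-T-1}.
\item Dividing \eqref{eq:general-hnf-2} by \(q\) gives \(V'+qU'(U'-W')\equiv0\pmod{q^{r-1}}\). This is \eqref{eq:general-T-2}.
\item In \eqref{eq:general-hnf-3}, every term becomes divisible by \(q^2\): the terms are \(q^3w\), \(q^3vU'\), \(q^2uV'\), \(q^3uU'W'\), \(q^2V'(U'+W')\) and \(q^3U'W'^2\). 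Dividing by \(q^2\) gives \eqref{eq:general-T-3} modulo \(q^{r+s-2}\).
\end{itemize}
Thus \((U,W,V)\in\mathcal S_A(r,s)\) exactly when \((U',W',V')\) satisfies the conditions defining \(\mathcal T_A(r-1,s-1)\).

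\emph{Step 3: counting fibers.} In the three divided congruences, \(V'\) is only ever needed modulo \(q^{s-1}\), \(q^{r-1}\) and \(q^{r+s-2}\), all of which are at most \(q^{r+s-2}=q^{a+b}\). Therefore the conditions depend only on \(V'\) modulo \(q^{a+b}\). On the other hand, \(V'\) is naturally defined modulo \(q^{r+s-1}\), so each admissible class of \(V'\) modulo \(q^{a+b}\) has exactly \(q\) lifts. This is the same phenomenon as the passage from \(\widetilde V\) to \(V\) noted before the proposition, and it yields the factor \(q\) in \eqref{eq:general-square-zero-first-peeling}.

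The remaining point, and the one needing the most care, is well-definedness on residue classes. Changing a representative of \(U'\) by \(q^{r-1}\) changes \(U\) by \(q^r\), and changing \(W'\) by \(q^{s-1}\) changes \(W\) by \(q^s\). Since membership in \(\mathcal S_A(r,s)\) depends only on \(U\bmod q^r\), \(W\bmod q^s\) and \(V\bmod q^{r+s}\), the equivalence established in Step 2 shows that the divided conditions are likewise insensitive to these changes. Consequently the map \((U,W,V)\mapsto(U',W',V'\bmod q^{a+b})\) is a well-defined, surjective, \(q\)-to-one map from \(\mathcal S_A(r,s)\) onto \(\mathcal T_A(r-1,s-1)\). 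This proves \eqref{eq:general-square-zero-first-peeling}.

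An alternative route to the same bookkeeping is through the interpretation of \(H_{a,b,d}\) given before the proposition. The matrices \(H_{a,b,d}(U,W,\widetilde V)\) are exactly the interior HNF matrices with \(r=a+1\), \(s=b+1\) and off-diagonal parameters divisible by \(q\), so Step 1 says that every ideal with these \((r,s)\) is of this form.
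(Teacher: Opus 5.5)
Your proof is correct and is essentially the paper's own argument: reduction modulo \(q\) forces \(U\equiv W\equiv V\equiv 0\); dividing the three congruences by \(q,q,q^2\) gives exactly the conditions defining \(\mathcal T_A(r-1,s-1)\); and the factor \(q\) comes from the \(q\) lifts of \(V'\) from modulo \(q^{r+s-2}\) to modulo \(q^{r+s-1}\). One caveat in your well-definedness paragraph, which the paper's own definitions share: with \(U,V\) fixed, membership is not literally a function of \(W\bmod q^s\), since shifting \(W\) by \(q^s\) moves the left side of \eqref{eq:general-hnf-2} by \(-q^sU\). The correct change of HNF representative is \((U,W,V)\mapsto(U,W+q^s,V+q^sU)\), and all three congruences are invariant under it. So the counts are intrinsic, and your \(q\)-to-one map is valid once compatible lifts are fixed (e.g.\ \(W=qW'\)).
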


\begin{proof}
Using \eqref{eq:general-square-zero-polynomial}, reduction of
\eqref{eq:general-hnf-1}--\eqref{eq:general-hnf-3} modulo \(q\) gives
\[
V=W^2,\qquad V+U(U-W)=0,\qquad -V(U+W)+UW^2=0 .
\]
Substituting \(V=W^2\), the third congruence becomes \(-W^3=0\), hence
\(W=0\).  The second congruence then becomes \(U^2=0\), so \(U=0\), and
the first congruence gives \(V=0\).  Thus these equations force
\[
U\equiv W\equiv V\equiv0\pmod q.
\]
Thus every element of \(\mathcal S_A(r,s)\), with \(r,s\ge1\), can be
written uniquely in the form
\[
U=qU_1,\qquad W=qW_1,\qquad V=qV_1,
\]
where \(U_1,W_1,V_1\) are defined modulo \(q^{r-1},q^{s-1},q^{r+s-1}\).

Substitution into \eqref{eq:general-hnf-1} and
\eqref{eq:general-hnf-2}, followed by division by \(q\), gives
\eqref{eq:general-T-1} and \eqref{eq:general-T-2} with
\((a,b)=(r-1,s-1)\).  Substitution into
\eqref{eq:general-hnf-3} gives
\[
q^2\bigl(
uV_1-V_1(U_1+W_1)+q(-w+vU_1-uU_1W_1+U_1W_1^2)
\bigr),
\]
so after division by \(q^2\) one obtains \eqref{eq:general-T-3} modulo
\(q^{r+s-2}\).

The three resulting congruences depend on \(V_1\) only through its residue
class modulo \(q^{r+s-2}\).  The reduction map
\[
\Z_q/q^{r+s-1}\Z_q\longrightarrow\Z_q/q^{r+s-2}\Z_q
\]
has fibers of cardinality \(q\).  Thus each
\((U_1,W_1,V_1)\in\mathcal T_A(r-1,s-1)\) has exactly \(q\) preimages in
\(\mathcal S_A(r,s)\), obtained by lifting \(V_1\) modulo
\(q^{r+s-1}\).  This proves
\eqref{eq:general-square-zero-first-peeling}.
\end{proof}
Put
\[
G_A^{\partial}(t):=
\sum_{r\ge0}\#\mathcal S_A(r,0)t^r
+\sum_{s\ge1}\#\mathcal S_A(0,s)t^{2s}
\]
and
\[
G_A(t):=
\sum_{a,b\ge0}\#\mathcal T_A(a,b)t^{a+2b}.
\]
Then
\begin{proposition}\label{prop:peeled-hnf-series}
We have 
\begin{equation}\label{eq:peeled-relative-hnf-series}
J_A(s)=
\frac{G_A^{\partial}(t)+qt^3G_A(t)}{1-t^3}.
\end{equation}
\end{proposition}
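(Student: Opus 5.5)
The identity follows from Proposition~\ref{prop:single-residue-square-zero-hnf} by splitting the double sum over \((r,s)\) into a boundary part and an interior part. In \eqref{eq:general-single-residue-hnf-series} we separate the index set \(\{(r,s):r,s\ge0\}\) into the disjoint pieces
\[
\{(r,0):r\ge0\},\qquad \{(0,s):s\ge1\},\qquad \{(r,s):r,s\ge1\}.
\]
The first two pieces contribute exactly
\(\sum_{r\ge0}\#\mathcal S_A(r,0)t^r+\sum_{s\ge1}\#\mathcal S_A(0,s)t^{2s}=G_A^{\partial}(t)\), since the exponent \(r+2s\) specializes to \(r\) and \(2s\), respectively. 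No further evaluation of these boundary counts is needed, because they appear in the statement unchanged.

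For the interior piece we apply Proposition~\ref{prop:general-square-zero-first-peeling}, which is valid precisely when \(r,s\ge1\). It gives \(\#\mathcal S_A(r,s)=q\,\#\mathcal T_A(r-1,s-1)\). We then reindex by \(a=r-1\), \(b=s-1\), so that \(a,b\) range over all nonnegative integers and
\[
t^{r+2s}=t^{(a+1)+2(b+1)}=t^{3}\,t^{a+2b}.
\]
The interior sum therefore becomes \(q t^3\sum_{a,b\ge0}\#\mathcal T_A(a,b)t^{a+2b}=qt^3G_A(t)\). Adding the two contributions and keeping the common factor \((1-t^3)^{-1}\) from Proposition~\ref{prop:single-residue-square-zero-hnf} yields \eqref{eq:peeled-relative-hnf-series}.

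The argument is thus pure bookkeeping, and the only point to check is that all rearrangements are legitimate as formal power series identities. For each fixed exponent \(n\), only finitely many pairs \((r,s)\) satisfy \(r+2s=n\), and every set \(\mathcal S_A(r,s)\) and \(\mathcal T_A(a,b)\) is finite. Consequently each coefficient is a finite sum, and the regrouping is justified. The one substantive input, that the square-zero hypothesis \eqref{eq:general-square-zero-polynomial} forces \(U\equiv W\equiv V\equiv0\pmod q\) in the interior, is already contained in Proposition~\ref{prop:general-square-zero-first-peeling}, so no genuine obstacle remains.
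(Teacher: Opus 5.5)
Your proposal is correct and follows essentially the same route as the paper's proof. Both split the sum from Proposition~\ref{prop:single-residue-square-zero-hnf} into the boundary part (which is \(G_A^{\partial}(t)\) by definition) and the interior part \(r,s\ge1\), then apply Proposition~\ref{prop:general-square-zero-first-peeling} and reindex by \((a,b)=(r-1,s-1)\) to obtain \(qt^3G_A(t)\).
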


\begin{proof}
Split the sum in \eqref{eq:general-single-residue-hnf-series} into the
boundary part \(r=0\) or \(s=0\) and the interior part \(r,s\ge1\).  The
boundary part is exactly \(G_A^{\partial}(t)\).  In the interior, apply
Proposition~\ref{prop:general-square-zero-first-peeling} and put
\((a,b)=(r-1,s-1)\); then
\[
t^{r+2s}=t^{a+2b+3}
\]
and the interior contribution becomes \(qt^3G_A(t)\).  This
proves \eqref{eq:peeled-relative-hnf-series}.
\end{proof}

\section{Root counts and the congruence generating functions}
\label{sec:primitive-induction-square-zero}

We express \(G_A^\partial(t)\) and \(G_A(t)\) from
Proposition~\ref{prop:peeled-hnf-series} in terms of a single cubic
polynomial.  For a cubic
\(\Phi\in\Z_q[T]\), put
\[
N_\Phi(k):=
\#\{z\in\Z_q/q^k\Z_q:\Phi(z)\equiv0\pmod {q^k}\},
\qquad k\ge1,
\]
and set \(N_\Phi(0)=1\).

Let \(D_\Phi\in\Z_q[X,Y]\) be the divided difference of \(\Phi\):
\[
\Phi(Y)-\Phi(X)=(Y-X)D_\Phi(X,Y).
\]
Thus \(D_\Phi(X,X)=\Phi'(X)\).

\begin{definition}\label{def:cluster-set}
For \(\epsilon\in\{0,1\}\) and \(a,b\ge\epsilon\), let
\[
m=\min(a,b),\qquad M=\max(a,b),\qquad
P_{\Phi,a,b}(X,Y)=
\begin{cases}
\Phi(Y),&a\le b,\\
\Phi(X),&a>b.
\end{cases}
\]
Let \(\mathfrak C_\Phi^{[\epsilon]}(a,b)\) be the set consisting of the triples
\[
(X,Y,Z)\in
(\Z_q/q^a\Z_q)\times(\Z_q/q^b\Z_q)\times(\Z_q/q^m\Z_q)
\]
satisfying
\begin{align}
D_\Phi(X,Y)&\equiv0\pmod {q^{m-\epsilon}},
\label{eq:cluster-divided-difference}\\
P_{\Phi,a,b}(X,Y)&\equiv0\pmod {q^{M-\epsilon}},
\label{eq:cluster-polynomial}\\
(X-Y)Z&\equiv
\frac{P_{\Phi,a,b}(X,Y)}{q^{M-\epsilon}}
\pmod {q^m}.
\label{eq:cluster-linear}
\end{align}
Congruences modulo \(q^0\) impose no condition.  We write
\(\mathfrak C_\Phi(a,b)=\mathfrak C_\Phi^{[0]}(a,b)\) for \(a,b\ge0\),
and retain the superscript \([1]\) for the shifted family.  The quotient in
the last congruence is formed after choosing lifts of \(X\) and \(Y\) to
\(\Z_q\).  A change of lifts translates \(Z\), so the cardinality, also
after restricting \(X\) and \(Y\) to prescribed residue classes, is
independent of the choices.
\end{definition}

We compare the set \(\mathcal S_A(r,s)\) and \(\mathfrak C_F(r,s)\) for \(A= \Z_q[X]/(F(X))\).
\begin{proposition}\label{prop:scaled-hnf-set-comparison}
Let \(A\), \(F\), and \(\mathcal S_A(r,s)\) be as in
Definition~\ref{def:single-residue-square-zero-hnf-sets}.  For every
\(r,s\ge0\), there is a bijection
\begin{equation}\label{eq:scaled-hnf-set-comparison}
\mathcal S_A(r,s)\simeq \mathfrak C_F(r,s).
\end{equation}
More explicitly, choose lifts \(U,W,V\in\Z_q\) and put
\[
X=-U,\qquad Y=W-c_2.
\]
If \(r\le s\), then \eqref{eq:general-hnf-1} implies that
\[
q^{-s}\bigl(V-W^2+c_2W-c_1\bigr)\in\Z_q.
\]
Let
\[
Z\equiv q^{-s}\bigl(V-W^2+c_2W-c_1\bigr)\pmod{q^r}.
\]
If \(r>s\), then \eqref{eq:general-hnf-2} implies that
\[
q^{-r}\bigl(V+U(U-W)\bigr)\in\Z_q,
\]
and we set
\[
Z\equiv q^{-r}\bigl(V+U(U-W)\bigr)\pmod{q^s}.
\]
These residue classes are well defined under changes of the chosen lifts.These assignments induce the bijection
\eqref{eq:scaled-hnf-set-comparison}.  In particular,
\[
\#\mathcal S_A(r,s)=\#\mathfrak C_F(r,s).
\]
\end{proposition}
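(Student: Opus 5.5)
The plan is to carry out the change of variables \(X=-U\), \(Y=W-c_2\) directly in the three congruences \eqref{eq:general-hnf-1}--\eqref{eq:general-hnf-3}. Each of \eqref{eq:general-hnf-1} and \eqref{eq:general-hnf-2} then says that \(V\) is congruent to a quadratic polynomial in \(X,Y\), and \eqref{eq:general-hnf-3} is linear in \(V\). First I will record three elementary identities in \(\Z_q[X,Y]\).
\begin{enumerate}[label=\textup{(\arabic*)}, leftmargin=2em]
\item The two quadratics are
\[
Q_1:=W^2-c_2W+c_1=Y^2+c_2Y+c_1,
\qquad
Q_2:=-U(U-W)=-X^2-XY-c_2X .
\]
\item Their difference is the divided difference
\[
Q_1-Q_2=X^2+XY+Y^2+c_2(X+Y)+c_1=D_F(X,Y).
\]
\item The coefficient of \(V\) in the left side \(E(U,W,V)\) of \eqref{eq:general-hnf-3} is \(c_2-U-W=X-Y\).
\end{enumerate}
I then expect a direct expansion to give \(E(U,W,Q_1)=-F(Y)\) and \(E(U,W,Q_2)=-F(X)\), possibly up to a common sign. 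The second value is consistent with the first, since \(E(U,W,Q_1)-E(U,W,Q_2)=(X-Y)D_F(X,Y)\) and \((X-Y)D_F(X,Y)=F(X)-F(Y)\). Verifying these two polynomial identities is the one genuine computation, and I expect it to be the main (purely bookkeeping) obstacle; any global sign will be absorbed by replacing \(Z\) with \(-Z\).

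Consider first the case \(r\le s\). Condition \eqref{eq:general-hnf-1} is equivalent to writing \(V=Q_1+q^sZ\) with \(Z\in\Z_q\), and \(V\) modulo \(q^{r+s}\) corresponds exactly to \(Z\) modulo \(q^r\). This is the definition of \(Z\) in the statement. By identity (2), condition \eqref{eq:general-hnf-2} becomes \(D_F(X,Y)+q^sZ\equiv0\pmod{q^r}\). Since \(s\ge r\), this is equivalent to \eqref{eq:cluster-divided-difference} with \(m=r\) and \(\epsilon=0\). By identity (3) and the evaluation of \(E\) at \(Q_1\), condition \eqref{eq:general-hnf-3} becomes \(-F(Y)+(X-Y)q^sZ\equiv0\pmod{q^{r+s}}\). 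This holds exactly when \(F(Y)\equiv0\pmod{q^s}\), which is \eqref{eq:cluster-polynomial} with \(P=F(Y)\) and \(M=s\), and in addition \((X-Y)Z\equiv F(Y)/q^s\pmod{q^r}\), which is \eqref{eq:cluster-linear}. The ranges also match: \(X\) is a residue modulo \(q^r\) and \(Y\) modulo \(q^s\), exactly as \(U\) and \(W\) are.

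The case \(r>s\) is symmetric, with the roles of the two quadratics exchanged. Here \eqref{eq:general-hnf-2} gives \(V=Q_2+q^rZ\) with \(Z\) modulo \(q^s\), and \eqref{eq:general-hnf-1} reduces to \(D_F\equiv0\pmod{q^s}\). Condition \eqref{eq:general-hnf-3} becomes \(F(X)\equiv0\pmod{q^r}\) together with \((X-Y)Z\equiv F(X)/q^r\pmod{q^s}\), and here \(P=F(X)\) in agreement with Definition~\ref{def:cluster-set}.

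Finally I will check well-definedness. Changing the lifts of \(U,W\) by multiples of \(q^r,q^s\) changes \(Q_1\) (respectively \(Q_2\)) by an amount that is compatible with the congruences already imposed. It therefore only translates \(Z\), in the manner described in Definition~\ref{def:cluster-set}. The map \(V\mapsto Z\) is a bijection between the fibres over each fixed \((U,W)\), because \(V\) modulo \(q^{r+s}\) corresponds to \(Z\) modulo \(q^{\min(r,s)}\) once the larger congruence is imposed. The degenerate cases \(r=0\) or \(s=0\) go through the same argument, since congruences modulo \(q^0\) are vacuous. This gives the bijection \eqref{eq:scaled-hnf-set-comparison} and hence \(\#\mathcal S_A(r,s)=\#\mathfrak C_F(r,s)\).
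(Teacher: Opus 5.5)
Your proposal is correct and is essentially the paper's proof. The paper records the same identities as $D_F(X,Y)=K-L$ and $M=(X-Y)L-F(Y)=(X-Y)K-F(X)$, where $L,K,M$ are the left sides of \eqref{eq:general-hnf-1}--\eqref{eq:general-hnf-3}; this is exactly your evaluation of $E$ at $Q_1,Q_2$ combined with linearity in $V$, and the paper then treats $r\le s$ and $r>s$ just as you do. The expansion comes out with exactly your signs, because the $V$-free part of $E$ equals $-c_0+UQ_1$, so no substitution $Z\mapsto -Z$ is needed.
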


\begin{proof}
Denote the left sides of
\eqref{eq:general-hnf-1}--\eqref{eq:general-hnf-3} by \(L,K,M\),
respectively.  Thus
\begin{align*}
L&=V-W^2+c_2W-c_1,\\
K&=V+U(U-W),\\
M&=-c_0+c_1U+c_2V-c_2UW-V(U+W)+UW^2.
\end{align*}
For \(X=-U\) and \(Y=W-c_2\), direct expansion gives
\begin{equation}\label{eq:scaled-hnf-polynomial-identities}
D_F(X,Y)=K-L,\qquad
M=(X-Y)L-F(Y)=(X-Y)K-F(X).
\end{equation}

Suppose first that \(r\le s\).  The condition \(L\equiv0\pmod {q^s}\)
allows us to write \(L=q^sZ\), with \(Z\) defined modulo \(q^r\).
Since \(s\ge r\), the first identity in
\eqref{eq:scaled-hnf-polynomial-identities} shows that
\(K\equiv0\pmod {q^r}\) is equivalent to
\[
D_F(X,Y)\equiv0\pmod {q^r}.
\]
The second identity shows that \(M\equiv0\pmod {q^{r+s}}\) is equivalent
to
\[
F(Y)\equiv0\pmod {q^s},
\qquad
(X-Y)Z\equiv\frac{F(Y)}{q^s}\pmod {q^r}.
\]
These are precisely the defining conditions for
\((X,Y,Z)\in\mathfrak C_F(r,s)\).

If \(r>s\), write \(K=q^rZ\), with \(Z\) defined modulo \(q^s\).
The same identities show that the remaining conditions are equivalent to
\[
D_F(X,Y)\equiv0\pmod {q^s},\qquad
F(X)\equiv0\pmod {q^r},\qquad
(X-Y)Z\equiv\frac{F(X)}{q^r}\pmod {q^s},
\]
which again are precisely the defining conditions for
\(\mathfrak C_F(r,s)\).  In both cases the construction is reversed by
putting \(U=-X\), \(W=Y+c_2\), and
\[
V=
\begin{cases}
W^2-c_2W+c_1+q^sZ,&r\le s,\\
U(W-U)+q^rZ,&r>s.
\end{cases}
\]
After lifts of \(X\) and \(Y\) are fixed, these two constructions are
inverse.  Changing either lift translates \(Z\) as described in
Definition~\ref{def:cluster-set}.  Thus a different choice of lifts only
composes the bijection with a permutation of the \(Z\)-coordinate.  This
proves the result.
\end{proof}

For the rest of the section, return to the square-zero specialization
\eqref{eq:general-square-zero-polynomial}:
\[
F(X)=X^3+quX^2+q^2vX+q^3w.
\]
Define the following auxiliary series:
\begin{equation}\label{eq:auxiliary-hnf-series}
\begin{aligned}
G_\Phi^\partial(t)
&:=1+\sum_{j\ge1}q^{\min(j-1,2)}
N_\Phi(\max(j-3,0))(t^j+t^{2j}),\\
E_\Phi(t)
&:=1+q\sum_{j\ge1}N_\Phi(j-1)(t^j+t^{2j}),\\
\mathcal C_\Phi(t)
&:=\sum_{a,b\ge1}\#\mathfrak C_\Phi^{[1]}(a,b)t^{a+2b}.
\end{aligned}
\end{equation}

Put
\[
\Phi_A(T):=T^3+uT^2+vT+w.
\]
Thus \(F(qT)=q^3\Phi_A(T)\).

\begin{proposition}\label{prop:peeled-hnf-set-comparison}
For every
\(a,b\ge1\), there is a bijection
\begin{equation}\label{eq:peeled-hnf-set-comparison}
\mathcal T_A(a,b)\simeq
\mathfrak C_{\Phi_A}^{[1]}(a,b).
\end{equation}
More explicitly, choose lifts \(U,W,V\in\Z_q\) and put
\[
X=-U,\qquad Y=W-u.
\]
If \(a\le b\), then \eqref{eq:general-T-1} implies that
\[
q^{-b}\bigl(V-q(W^2-uW+v)\bigr)\in\Z_q.
\]
Define
\[
Z\equiv
q^{-b}\bigl(V-q(W^2-uW+v)\bigr)
\pmod{q^a}.
\]
If \(a>b\), then \eqref{eq:general-T-2} implies that
\[
q^{-a}\bigl(V+qU(U-W)\bigr)\in\Z_q,
\]
and define
\[
Z\equiv
q^{-a}\bigl(V+qU(U-W)\bigr)
\pmod{q^b}.
\]
These residue classes are well defined under changes of the chosen lifts.
These assignments induce the bijection
\eqref{eq:peeled-hnf-set-comparison}.  In particular,
\[
\#\mathcal T_A(a,b)
=\#\mathfrak C_{\Phi_A}^{[1]}(a,b).
\]
\end{proposition}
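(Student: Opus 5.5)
The argument will mirror the proof of Proposition~\ref{prop:scaled-hnf-set-comparison}, which already carries out the same manipulation for $\mathcal S_A$ and $\mathfrak C_F$. The difference is that every appearance of the coefficients of $F$ is now replaced by the scaled data of $\Phi_A$, and the moduli are shifted by one. Concretely, we denote the left sides of \eqref{eq:general-T-1}--\eqref{eq:general-T-3} by
\[
L'=V-q(W^2-uW+v),\qquad K'=V+qU(U-W),
\]
\[
M'=uV-V(U+W)+q(-w+vU-uUW+UW^2).
\]
With $X=-U$ and $Y=W-u$, the first step is to verify by direct expansion the analogues of \eqref{eq:scaled-hnf-polynomial-identities}:
\[
qD_{\Phi_A}(X,Y)=K'-L',\qquad M'=(X-Y)L'-q\Phi_A(Y)=(X-Y)K'-q\Phi_A(X).
\]
These can be obtained quickly from the earlier identities. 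Apply them to the cubic $\Phi_A$, with $c_2=u$, $c_1=v$, $c_0=w$, and the variables $(U,W,V/q)$; then multiply through by $q$. This is legitimate because the identities are polynomial in $V$ and $V$ enters linearly.

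Next, assume $a\le b$. The congruence \eqref{eq:general-T-1} lets us write $L'=q^bZ$ with $Z$ defined modulo $q^a$. Since $b\ge a$, the congruence $K'\equiv0\pmod{q^a}$ is equivalent to $qD_{\Phi_A}(X,Y)\equiv0\pmod{q^a}$, that is, to $D_{\Phi_A}(X,Y)\equiv0\pmod{q^{a-1}}$. This is \eqref{eq:cluster-divided-difference} with $\epsilon=1$. Turning to the third congruence, $M'\equiv0\pmod{q^{a+b}}$ reads $q^b(X-Y)Z\equiv q\Phi_A(Y)\pmod{q^{a+b}}$. Since $b\ge1$, this forces $\Phi_A(Y)\equiv0\pmod{q^{b-1}}$, which is \eqref{eq:cluster-polynomial} with $M=b$. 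Dividing by $q^b$ then gives $(X-Y)Z\equiv\Phi_A(Y)/q^{b-1}\pmod{q^a}$, which is \eqref{eq:cluster-linear}. The case $a>b$ is symmetric: we write $K'=q^aZ$ with $Z$ modulo $q^b$ and use the second expression for $M'$. In the square-zero specialization the shift $\epsilon=1$ arises exactly from the factor $q$ in front of $D_{\Phi_A}$ and $\Phi_A$.

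The inverse map sets $U=-X$ and $W=Y+u$, and takes $V=q(W^2-uW+v)+q^bZ$ when $a\le b$, or $V=-qU(U-W)+q^aZ$ when $a>b$, reduced modulo $q^{a+b}$. Finally, we check well-definedness. Changing the lift of $U$ by a multiple of $q^a$, or of $W$ by a multiple of $q^b$, changes $V$ and $Z$ only by a translation that is compatible with the equivalences above. The same lift-independence remark as in Definition~\ref{def:cluster-set} then gives the bijection. I expect the main obstacle to be exactly this bookkeeping of lift changes, together with checking that the residue classes of $V$ modulo $q^{a+b}$ correspond bijectively to $Z$ modulo $q^{\min(a,b)}$. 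For example, when $a\le b$, changing $W$ by $q^b$ alters $L'$ only by multiples of $q^{b+1}$. I will handle this as in the earlier proof, by fixing lifts of $X$ and $Y$ first and observing that every remaining ambiguity is a permutation of the $Z$-coordinate.
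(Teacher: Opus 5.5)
Your proposal is correct and follows the paper's proof essentially step for step. It uses the same identities $K-L=qD_{\Phi_A}(X,Y)$ and $M=(X-Y)L-q\Phi_A(Y)=(X-Y)K-q\Phi_A(X)$, the same case split writing $L=q^bZ$ or $K=q^aZ$, the same inverse construction, and the same handling of lifts (fix lifts of $X,Y$ and absorb every remaining ambiguity into a translation of $Z$). The only small difference is that you obtain the identities by substituting $V/q$ into \eqref{eq:scaled-hnf-polynomial-identities} with $c_2=u$, $c_1=v$, $c_0=w$ and multiplying by $q$, instead of by direct expansion; this is a valid shortcut.
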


\begin{proof}
Denote the left sides of
\eqref{eq:general-T-1}--\eqref{eq:general-T-3} by \(L,K,M\),
respectively.  Thus
\begin{align*}
L&=V-q(W^2-uW+v),\\
K&=V+qU(U-W),\\
M&=uV-V(U+W)+q(-w+vU-uUW+UW^2).
\end{align*}
For \(X=-U\) and \(Y=W-u\), direct expansion gives
\begin{equation}\label{eq:peeled-hnf-polynomial-identities}
K-L=qD_{\Phi_A}(X,Y),\qquad
M=(X-Y)L-q\Phi_A(Y)=(X-Y)K-q\Phi_A(X).
\end{equation}

If \(a\le b\), write \(L=q^bZ\); if \(a>b\), write \(K=q^aZ\).
Exactly as in the proof of
Proposition~\ref{prop:scaled-hnf-set-comparison},
\eqref{eq:peeled-hnf-polynomial-identities}, after division by \(q\),
turns the remaining two congruences into
\eqref{eq:cluster-divided-difference}--\eqref{eq:cluster-linear} with
\(\epsilon=1\).  Thus the assignments in the statement take values in
\(\mathfrak C_{\Phi_A}^{[1]}(a,b)\).  Conversely, put
\(U=-X\), \(W=Y+u\), and
\[
V=
\begin{cases}
q(W^2-uW+v)+q^bZ,&a\le b,\\
qU(W-U)+q^aZ,&a>b.
\end{cases}
\]
The identities above show that this is the inverse construction.
Changing a lift of \(X\) or \(Y\) only translates \(Z\), as in
Definition~\ref{def:cluster-set}, so the resulting bijection is independent
of the chosen lifts.
\end{proof}
                     
\begin{proposition}
\label{prop:hnf-root-count-parametrization}
The series in Proposition~\ref{prop:peeled-hnf-series} satisfy
\begin{equation}\label{eq:hnf-root-count-parametrization}
G_A^\partial(t)=G_{\Phi_A}^\partial(t),
\qquad
G_A(t)=E_{\Phi_A}(t)+\mathcal C_{\Phi_A}(t).
\end{equation}
\end{proposition}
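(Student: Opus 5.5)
The plan is to treat the two identities in \eqref{eq:hnf-root-count-parametrization} separately. In both, the interior terms are handled by the bijections already proved, and the boundary terms are reduced to one-variable root counts.

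\textbf{The identity for $G_A^\partial$.} Apply Proposition~\ref{prop:scaled-hnf-set-comparison} with $s=0$, respectively $r=0$.
\begin{itemize}
\item In $\mathfrak C_F(r,0)$ one has $m=0$ and $M=r$, so the $Y$- and $Z$-coordinates are trivial. The only surviving condition is \eqref{eq:cluster-polynomial}, namely $F(X)\equiv0\pmod{q^r}$.
\item Symmetrically, $\mathfrak C_F(0,s)$ consists of the $Y$ with $F(Y)\equiv0\pmod{q^s}$.
\end{itemize}
Hence $\#\mathcal S_A(r,0)=N_F(r)$ and $\#\mathcal S_A(0,s)=N_F(s)$, so that $G_A^\partial(t)=1+\sum_{j\ge1}N_F(j)(t^j+t^{2j})$.

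It then remains to show
\[
N_F(j)=q^{\min(j-1,2)}N_{\Phi_A}(\max(j-3,0))\qquad(j\ge1)
\]
for $F(X)=X^3+quX^2+q^2vX+q^3w$. Since $F\equiv X^3\pmod q$, every root modulo $q^j$ satisfies $x\equiv0\pmod q$. Write $x=qy$, so that $F(x)=q^3\Phi_A(y)$.
\begin{itemize}
\item For $j\le3$ every $x\in q\Z_q/q^j\Z_q$ is a root, which gives $q^{j-1}$ solutions.
\item For $j\ge3$, the element $y$ runs modulo $q^{j-1}$ subject only to $\Phi_A(y)\equiv0\pmod{q^{j-3}}$. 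This gives $q^2N_{\Phi_A}(j-3)$ solutions, because the condition only sees $y$ modulo $q^{j-3}$.
\end{itemize}
Comparing with \eqref{eq:auxiliary-hnf-series} gives $G_A^\partial=G_{\Phi_A}^\partial$.

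\textbf{The identity for $G_A$.} Split $G_A(t)$ into the interior part $a,b\ge1$ and the boundary part $a=0$ or $b=0$. By Proposition~\ref{prop:peeled-hnf-set-comparison}, the interior part is exactly $\mathcal C_{\Phi_A}(t)$. So it suffices to show
\[
\#\mathcal T_A(0,0)=1,\qquad \#\mathcal T_A(a,0)=q\,N_{\Phi_A}(a-1),\qquad \#\mathcal T_A(0,b)=q\,N_{\Phi_A}(b-1)\quad(a,b\ge1).
\]
The first is trivial.

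For $b=0$, we have $W\in\Z_q/q^0$, so \eqref{eq:general-T-1} is vacuous. Congruence \eqref{eq:general-T-2} determines $V$ modulo $q^a$ uniquely from $U$ and a lift of $W$. Using the identity $M=(X-Y)K-q\Phi_A(X)$ from \eqref{eq:peeled-hnf-polynomial-identities}, with $X=-U$, congruence \eqref{eq:general-T-3} becomes $q\Phi_A(-U)\equiv0\pmod{q^a}$. This condition is independent of the lift of $W$. So $U$ ranges over classes modulo $q^a$ satisfying a condition modulo $q^{a-1}$, which gives $q\,N_{\Phi_A}(a-1)$ elements.

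The case $a=0$ is symmetric. It uses \eqref{eq:general-T-1} to determine $V$ and the identity $M=(X-Y)L-q\Phi_A(Y)$, with $Y=W-u$ running modulo $q^b$. Summing over $j$ produces exactly $E_{\Phi_A}(t)$.

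\textbf{Main obstacle.} The delicate point is the boundary bookkeeping. When one of $a,b$ (or $r,s$) is zero, a variable lives in $\Z_q/q^0\Z_q$ yet formally appears in congruences of positive modulus. I would check carefully, via the polynomial identities \eqref{eq:scaled-hnf-polynomial-identities} and \eqref{eq:peeled-hnf-polynomial-identities}, that after eliminating $V$ the surviving condition is independent of the chosen lift. I would also verify that the counting convention in Definition~\ref{def:single-residue-square-zero-hnf-sets} agrees with Definition~\ref{def:cluster-set} in these degenerate cases.
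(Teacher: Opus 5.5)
Your proposal is correct and takes essentially the paper's route: the interior part of $G_A$ comes from Proposition~\ref{prop:peeled-hnf-set-comparison}. For the boundary terms, you eliminate $V$ using the congruence that is linear in $V$. The remaining condition becomes $F(-U)\equiv 0$, respectively $q\Phi_A(\cdot)\equiv 0$, and you count its solutions with the substitution $x=qy$. The only cosmetic difference is that you get $\#\mathcal S_A(r,0)=N_F(r)$ through the bijection of Proposition~\ref{prop:scaled-hnf-set-comparison}, checking lift-independence with the polynomial identities, whereas the paper simply sets $W=0$ and eliminates directly.
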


\begin{proof}
We first count the boundary terms of \(\mathcal S_A(r,s)\).  If \(s=0\),
then \(W=0\), and \eqref{eq:general-hnf-2} eliminates \(V\).
For \(r\ge1\), the remaining congruence
\[
U^3-quU^2+q^2vU-q^3w\equiv0\pmod {q^r}
\]
forces \(U=qZ\) and becomes
\[
q^3\Phi_A(-Z)\equiv0\pmod {q^r},
\]
up to multiplication by \(-1\).  Hence
\[
\#\mathcal S_A(r,0)
=q^{\min(r-1,2)}N_{\Phi_A}(\max(r-3,0)).
\]
If \(r=0\), the same elimination, now using
\eqref{eq:general-hnf-1}, forces \(W=qZ\) and leaves
\(q^3\Phi_A(Z-u)\equiv0\pmod {q^s}\).  Therefore the same formula holds
for \(\#\mathcal S_A(0,s)\), with \(s\) in place of \(r\).  Since
\(\#\mathcal S_A(0,0)=1\), the definition of \(G_A^\partial\) gives the
first identity in \eqref{eq:hnf-root-count-parametrization}.

We next count \(\mathcal T_A(a,b)\).  If \(b=0\), then
\eqref{eq:general-T-2} eliminates \(V\); if \(a=0\), then
\eqref{eq:general-T-1} does.  The remaining congruences are, respectively,
\[
q\Phi_A(-U)\equiv0\pmod {q^a},
\qquad
q\Phi_A(W-u)\equiv0\pmod {q^b},
\]
up to a sign.  Consequently
\[
\#\mathcal T_A(j,0)=\#\mathcal T_A(0,j)
=qN_{\Phi_A}(j-1)\quad (j\ge1),
\qquad
\#\mathcal T_A(0,0)=1.
\]
Their contribution to \(G_A(t)\) is \(E_{\Phi_A}(t)\).  For \(a,b\ge1\),
Proposition~\ref{prop:peeled-hnf-set-comparison} identifies
\(\mathcal T_A(a,b)\) with
\(\mathfrak C_{\Phi_A}^{[1]}(a,b)\).  Hence the interior contribution is
\(\mathcal C_{\Phi_A}(t)\).  This proves the second identity in
\eqref{eq:hnf-root-count-parametrization}.
\end{proof}

\section{Overorder decomposition and recursion}
\label{sec:overorder-recursion}

\subsection{The overorder recursion}

Corollary~\ref{cor:square-zero-finite-induction} classifies the first
index-\(q\) overorders of a cubic order \(S\) with square-zero special
fiber.  The purpose of this section is to extend that first-layer
classification to a finite recursion for the full local overorder series
\(\widetilde L_S(t)\), including the higher-index overorders described in
Remark~\ref{rmk:higher-index-covers}.  This recursion will be compared with
the recursion for Yun's local zeta factor in
Section~\ref{sec:yun-local-zeta-recursion}.

We first treat an arbitrary such order \(S\).  A good basis associates
with \(S\) a binary cubic \(qB\), and each finite projective root
\(\bar z\) of \(\bar B\) determines an index-\(q\) overorder \(S_z\).
These orders are the central intermediate objects of the recurrence: the
overorder series is organized into contributions indexed by the
\(S_z\), and the type of \(S_z\) determines its contribution (Definition \ref{def: root-types}):  simple roots and obstructed multiple roots give terminal terms.
A liftable multiple root gives another order with square-zero special
fiber, represented by a binary cubic \(qB_z^+\), and lowers
\(v_q(\Delta(B))\) by \(2\).  Thus the passage from \(S\) to the orders
\(S_z\) is where the induction step occurs.  We then apply this
construction in Theorem~\ref{thm:q-scaled-overorder-decomposition} to
\[
U=\Z_q[\alpha],
\qquad
S=\Z_q+qU,
\]
and obtain the overorder identity used in
Section~\ref{sec:proof-local-comparison}.  Throughout this section, we use
the local factors \(C_R(t)=h_R(s)\zeta_R(s)\) and the overorder expansion
\eqref{eq:local-overorder-t-series}.

Let \(S\) be a cubic order with square-zero special fiber.  Choose a good
basis \(1,x,y\) of \(S\), so that
\(S=\Z_q\langle1,x,y\rangle\), and let \(X,Y\) be indeterminates.  Suppose
that the binary cubic associated with this basis is
\[
qB(X,Y),
\qquad
B(X,Y)=aX^3+bX^2Y+cXY^2+dY^3\in\Z_q[X,Y].
\]
With the discriminant notation of
Subsection~\ref{subsec: parametrize-cubic}, we have
\(\Delta(qB)=q^4\Delta(B)\ne0\), because \(qB\) is attached to an order
in an \'etale cubic algebra.
The change of good basis \((x,y)\mapsto(-x,-y)\) replaces the associated
form \(qB\) by \(-qB\) without changing \(S\).  Hence \(B\) and \(-B\)
give the same discriminant and projective-root parametrization.

We now apply
Proposition~\ref{prop:universal-square-zero-branching} to \(S\).  In the
chosen good basis, the Delone--Faddeev multiplication table
\eqref{eq:delone-faddeev-multiplication-table}, applied to \(qB\), gives
the following multiplication rules for \(x,y\):
\begin{equation}
x^2=-q^2ac+qbx-qay,
\qquad
xy=-q^2ad,
\qquad
y^2=-q^2bd+qdx-qcy.
\end{equation}
For \(g=rx+sy\), the three quadratic forms in
Proposition~\ref{prop:universal-square-zero-branching} are
\[
\Gamma_0(r,s)=-q(acr^2+2adrs+bds^2),\qquad
\Gamma_1(r,s)=br^2+ds^2,\qquad
\Gamma_2(r,s)=-ar^2-cs^2.
\]
The first congruence in
\eqref{eq:universal-square-zero-branching} is therefore automatic, and
the second is
\[
s\Gamma_1(r,s)-r\Gamma_2(r,s)=B(r,s)\equiv0\pmod q.
\]
Consequently, the projective roots \([\bar r:\bar s]\) of \(\bar B\)
parametrize the index-\(q\) overorders of \(S\), with
\[
S_{[\bar r:\bar s]}:=S+\Z_q\frac{rx+sy}{q}.
\]
For the directed construction below, assume that \([1:0]\) is a root of
\(\bar B(X,Y)\), equivalently \(a\in q\Z_q\).
In particular, for a root \(\bar z\) of \(\bar B(T,1)\), choose a lift
\(z\in\Z_q\) and put
\[
S_\infty:=S+\Z_q\frac{x}{q},
\qquad
S_z:=S+\Z_q\frac{zx+y}{q}.
\]
Set
\[
U:=\Z_q\langle1,x/q,y/q\rangle .
\]

\begin{lemma}
\label{lem:index-q-overorder-joins}
For every root \(\bar z\) of \(\bar B(T,1)\), one has
\[
S_z+S_\infty=U.
\]
Thus \(U\) is the smallest overorder containing both \(S_\infty\) and
\(S_z\).  Moreover,
\[
[U:S]=q^2,
\qquad
[U:S_z]=[U:S_\infty]=q.
\]
\end{lemma}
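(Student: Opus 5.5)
The plan is to first check that \(U\) is an order, and then get the join and the indices by comparing \(\Z_q\)-bases.

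\emph{Step 1: \(U\) is an order.} Substituting \(x=qx'\), \(y=qy'\) into the multiplication rules for \(x,y\) gives
\[
x'^2=-ac+bx'-ay',\qquad x'y'=-ad,\qquad y'^2=-bd+dx'-cy'.
\]
This is the Delone--Faddeev table \eqref{eq:delone-faddeev-multiplication-table} attached to \(B\) itself. Hence \(U=\Z_q\langle 1,x',y'\rangle\) is closed under multiplication, and it is the cubic ring attached to \(B\). It is contained in \(\CO_E\), since \(x',y'\) satisfy monic polynomials. So \(U\) is an overorder of \(S\). This is exactly the computation in Remark~\ref{rmk:higher-index-covers}.

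\emph{Step 2: the indices.} In terms of \(\Z_q\)-bases,
\[
S=\langle 1,x,y\rangle,\qquad S_\infty=\langle 1,x/q,y\rangle,\qquad S_z=\langle 1,x,(zx+y)/q\rangle,\qquad U=\langle 1,x/q,y/q\rangle.
\]
Here \(S_z\) has this basis because \(y=q\cdot(zx+y)/q-zx\), so the \(\Z_q\)-span of \(1,x,y,(zx+y)/q\) is already spanned by \(1,x,(zx+y)/q\). Comparing determinants of change-of-basis matrices then gives \([U:S]=q^2\) and \([S_\infty:S]=[S_z:S]=q\), so \([U:S_\infty]=[U:S_z]=q\). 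That \(S_\infty\) and \(S_z\) are themselves orders, namely index-\(q\) overorders of \(S\), is already known: the first condition in \eqref{eq:universal-square-zero-branching} is automatic, and the roots \([1:0]\) and \([\bar z:1]\) of \(\bar B\) satisfy the second.

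\emph{Step 3: the join.} Both \(S_\infty\) and \(S_z\) lie in \(U\), so \(S_z+S_\infty\subseteq U\). Conversely, \(S_z+S_\infty\) contains \(x/q\) and \((zx+y)/q\), hence contains \(y/q=(zx+y)/q-z(x/q)\), and therefore contains all of \(U\). Thus \(S_z+S_\infty=U\). Since any ring containing \(S_\infty\) and \(S_z\) contains their \(\Z_q\)-sum, and \(U\) is a ring by Step~1, \(U\) is the smallest overorder containing both.

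The only mildly delicate point is Step~1, namely that \(U\) is closed under multiplication. The rescaled table settles it at once, so no real obstacle is expected.
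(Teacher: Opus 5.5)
Your proposal is correct and follows the paper's own proof. The paper likewise computes the module sum directly as \(\Z_q\langle 1,x/q,y/q\rangle\), checks closure under multiplication by dividing the multiplication relations by \(q^2\), and reads the three indices off the bases. Your version simply writes out the rescaled Delone--Faddeev table for \(B\) and the change-of-basis determinants, which the paper leaves implicit.
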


\begin{proof}
\begin{align*}
S_\infty+S_z
&=S+\Z_q\frac{x}{q}+\Z_q\frac{zx+y}{q}\\
&=\Z_q\langle1,x/q,y/q\rangle .
\end{align*}
Dividing the multiplication relations for \(x,y\) by \(q^2\) shows that the
lattice on the right is closed under multiplication.  It is therefore an
overorder and, being the module sum of \(S_\infty\) and \(S_z\), is the
smallest overorder containing both.  The bases give all three
index formulas.
\end{proof}

For an inclusion \(R\subset R'\) of index \(q\), define the directed
overorder series
\begin{equation}\label{eq:directed-overorder-series}
\mathscr D(R,R';t):=\widetilde L_R(t)-qt^2\widetilde L_{R'}(t).
\end{equation}
It is the contribution of \(R\) and of the overorders of \(R\) that do not
contain the distinguished index-\(q\) overorder \(R'\).

The role of \(S_z\) is more than parametrizing the first overorders of
\(S\).  Lemma~\ref{lem:first-layer-overorder-partition} will show that
every strict overorder \(T\) of \(S\) that does not contain \(S_\infty\)
has
\[
T\cap U=S_z
\]
for a unique finite root \(\bar z\).  Consequently,
\eqref{eq:directed-lattice-decomposition} organizes the corresponding
part of the generating series into the root-indexed contributions
\(qt^2\mathscr D(S_z,U;t)\).  By
Lemma~\ref{lem:directed-index-q-overorder-chart}, the simple and
obstructed cases are terminal; in the liftable case the contribution is
the same square-zero problem for \(B_z^+\), to which induction applies
because of the discriminant drop
\eqref{eq:directed-discriminant-drop}.

\begin{remark}
\label{rem:projective-root-action}
The projective-root parametrization above is compatible with the
Delone--Faddeev action of Section~\ref{subsec: parametrize-cubic}.  Put
\[
e=\begin{pmatrix}x\\y\end{pmatrix}.
\]
For the roots \([1:0]\) and \([\bar z:1]\), respectively, consider
\[
g_\infty=
\begin{pmatrix}q&0\\0&1\end{pmatrix},
\qquad
g_z=
\begin{pmatrix}1&0\\-z&q\end{pmatrix},
\qquad z\in\Z_q,
\]
where \(z\) is any lift of \(\bar z\).  Under the convention of
Section~\ref{subsec: parametrize-cubic}, the basis column
\(e_g:=g^{-1}e\) is associated with the transformed form \((qB)^g\).
Here
\[
g_\infty^{-1}e=
\begin{pmatrix}x/q\\y\end{pmatrix},
\qquad
g_z^{-1}e=
\begin{pmatrix}x\\(zx+y)/q\end{pmatrix}.
\]
The lattices generated by \(1\) and these columns are precisely
\(S_\infty\) and \(S_z\).  The action of
Section~\ref{subsec: parametrize-cubic} gives
\begin{align*}
(qB)^{g_\infty}(X,Y)
&=\frac{a}{q}X^3+bX^2Y+qcXY^2+q^2dY^3,\\
(qB)^{g_z}(X,Y)&=\frac{1}{q}B(qX+zY,Y).
\end{align*}
These forms are integral exactly when \(a\in q\Z_q\) and
\(B(z,1)\in q\Z_q\), respectively.  Since \(B(1,0)=a\), these are
exactly the conditions that \([1:0]\) and \([\bar z:1]\) be roots of
\(\bar B\).  Thus the Delone--Faddeev action recovers the projective-root
parametrization above.
\end{remark}

Since \(S/qS\) has one residue factor and two-dimensional socle, \(S\) is
non-Gorenstein.  Thus
\(h_S(s)=1+qt\) and \(\zeta_S(s)=(1-t)^{-1}\), so
\begin{equation}\label{eq:square-zero-c-series}
C_S(t)=C_\square(t):=\frac{1+qt}{1-t}.
\end{equation}

\begin{lemma}
\label{lem:first-layer-intersection}
Let \(S\) be an order in an \'etale cubic \(\Q_q\)-algebra \(E\), and
suppose that
\[
S=\Z_q\cdot1\oplus M,
\qquad
U=\Z_q\cdot1\oplus q^{-1}M
\]
is an overorder of \(S\), where \(M\) is free of rank two.  If
\(V\supsetneq S\) is an overorder such that \(U\nsubseteq V\), then
\[
[V\cap U:S]=q.
\]
\end{lemma}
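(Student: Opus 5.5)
The plan is to reduce the statement to linear algebra in the two-dimensional \(\mathbb F_q\)-space \(q^{-1}M/M\), and then use the multiplicative structure only to rule out index \(\ge q^2\).

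Set \(W:=V\cap U\). It is an order with \(S\subseteq W\subseteq U\), and \(U/S\simeq q^{-1}M/M\) is killed by \(q\) and has order \(q^2\). So \(W/S\) is an \(\mathbb F_q\)-subspace of \(q^{-1}M/M\) of dimension \(0\), \(1\) or \(2\). Dimension \(2\) means \(W=U\), which is excluded because \(U\nsubseteq V\). It remains to rule out \(W=S\).

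Suppose \(V\cap U=S\) while \(V\supsetneq S\). I would argue as in the proof of Proposition~\ref{prop:first-overorder-contained}.

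1. Pick \(y\in V\setminus S\) and let \(m\ge1\) be least with \(q^my\in S\). Put \(x=q^{m-1}y\). Then \(x\in V\), \(x\notin S\), and \(qx\in S\).
2. Write \(qx=c+\mu\) with \(c\in\Z_q\) and \(\mu\in M\).
3. Since \(x\) and \(q^{-1}\mu\in U\) are integral, \(c/q=x-q^{-1}\mu\) is an element of \(\Q_q\) integral over \(\Z_q\). Hence \(c\in q\Z_q\).
4. Subtracting the scalar \(c/q\in\Z_q\subseteq S\), we may assume \(x=q^{-1}\mu\in q^{-1}M\subseteq U\).
5. Now \(x\in V\cap U=S\), contradicting \(x\notin S\).

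Therefore \(W\ne S\), and \(W/S\) has dimension exactly \(1\), that is, \([V\cap U:S]=q\).

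The main point to check is step 3, where integrality is used. It needs \(U\) to be an order, so its elements are integral over \(\Z_q\). It also needs every element of the overorder \(V\) to be integral; this holds because \(V\subseteq\CO_E\) by Lemma~\ref{lem:multiplicator-endomorphism}, applied to \(M=V\). Everything else is bookkeeping. In particular, the fact that \(V\cap U\) is a ring plays no role in the count beyond guaranteeing that it is an overorder of \(S\), which the subsequent partition lemma needs.
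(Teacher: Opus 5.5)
Your proof is correct and takes essentially the same approach as the paper. In both, $V\cap U$ gives an $\mathbb F_q$-subspace of the two-dimensional space $U/S$ that is proper because $U\nsubseteq V$, and nonzero because a minimal-$q$-power element of $V$ has its scalar part forced into $\Z_q$ by integrality. The only difference is packaging: the paper projects to $E/(\Q_q\cdot 1)$ and uses $R\cap\Q_q\cdot1=\Z_q\cdot1$, whereas you remove the scalar component $c/q$ explicitly, exactly as in the proof of Proposition~\ref{prop:first-overorder-contained}.
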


\begin{proof}
Let
\[
\pi:E\longrightarrow E/(\Q_q\cdot1),
\qquad
\Lambda_R:=\pi(R)
\]
for every overorder \(R\) of \(S\).  Since the elements of \(R\) are
integral over \(\Z_q\),
\[
R\cap(\Q_q\cdot1)=\Z_q\cdot1.
\]
It follows that, for overorders \(R_1,R_2\) of \(S\),
\[
R_1\subseteq R_2\Longleftrightarrow
\Lambda_{R_1}\subseteq\Lambda_{R_2},
\qquad
\Lambda_{R_1\cap R_2}=\Lambda_{R_1}\cap\Lambda_{R_2},
\]
and, if \(R_1\subseteq R_2\), the map \(\pi\) induces an isomorphism
\[
R_2/R_1\xrightarrow{\sim}\Lambda_{R_2}/\Lambda_{R_1}.
\]

The hypotheses give
\[
\Lambda_U=q^{-1}\Lambda_S,
\qquad
\Lambda_U/\Lambda_S\simeq\mathbb F_q^2.
\]
Choose the least \(j\ge1\) such that
\(q^j\Lambda_V\subseteq\Lambda_S\).  There is
\(\lambda\in\Lambda_V\) such that
\(q^{j-1}\lambda\notin\Lambda_S\), whereas
\(q^j\lambda\in\Lambda_S\).  Hence
\[
q^{j-1}\lambda\in
(\Lambda_V\cap\Lambda_U)\setminus\Lambda_S.
\]
Thus \((\Lambda_V\cap\Lambda_U)/\Lambda_S\) is a nonzero subspace of
\(\Lambda_U/\Lambda_S\).  It is proper because \(U\nsubseteq V\), and
therefore it is one-dimensional.  The preceding intersection and
quotient identities now give
\[
(V\cap U)/S\simeq
(\Lambda_V\cap\Lambda_U)/\Lambda_S,
\]
which proves the assertion.
\end{proof}

\begin{lemma}\label{lem:first-layer-overorder-partition}
Under the hypotheses of Lemma~\ref{lem:first-layer-intersection}, the
strict overorders of \(S\) that do not contain \(U\) are partitioned by
the index-\(q\) overorders \(S'\) satisfying
\[
S\subsetneq S'\subset U.
\]
The part indexed by \(S'\) consists of the overorders that contain \(S'\)
but do not contain \(U\); for each such overorder \(V\), the indexing
overorder is determined uniquely by
\[
S'=V\cap U.
\]
\end{lemma}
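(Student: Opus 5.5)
The plan is to derive the partition directly from Lemma~\ref{lem:first-layer-intersection}, together with the lattice-theoretic description of overorders through the projection \(\pi\) used in its proof. Let \(V\supsetneq S\) be an overorder with \(U\nsubseteq V\), and put \(S':=V\cap U\).

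The first step is to check that \(S'\) is an overorder of \(S\). It is an intersection of two orders containing \(S\), so it is closed under multiplication and contains \(1\). By Lemma~\ref{lem:first-layer-intersection}, \([S':S]=q\). Since \(S'\subseteq U\), it satisfies \(S\subsetneq S'\subset U\), which makes \(S'\) one of the indexing overorders. By construction \(V\supseteq S'\) and \(V\nsupseteq U\), so \(V\) lies in the part indexed by \(S'\).

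Next I would show that this index is unique. Suppose \(V\) contains some index-\(q\) overorder \(S''\) with \(S\subsetneq S''\subset U\). Then \(S''\subseteq V\cap U=S'\). Both \(S''\) and \(S'\) have index \(q\) over \(S\), so \(S''=S'\). Hence the parts indexed by distinct \(S'\) are disjoint, and every \(V\) belongs to exactly the part indexed by \(V\cap U\). This gives the claimed partition.

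It remains to check that no part contains an order it should not. Any overorder containing some \(S'\) with \(S\subsetneq S'\) is automatically a strict overorder of \(S\). So the union of the parts is exactly the set of strict overorders not containing \(U\). I should also record that \(S'\neq U\): this holds because \([U:S]=q^2\), which follows from \(U/S\simeq\Lambda_U/\Lambda_S\simeq\mathbb F_q^2\).

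The only nontrivial input is Lemma~\ref{lem:first-layer-intersection}, so I expect no real obstacle. The one point needing care is the uniqueness argument, which relies on every indexing overorder lying inside \(U\); this is part of the definition of the index set.
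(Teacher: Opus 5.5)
Your proposal is correct and takes essentially the same route as the paper. You apply Lemma~\ref{lem:first-layer-intersection} to get $[V\cap U:S]=q$, so $V\cap U$ is one of the indexing overorders; for uniqueness, any indexing $S''\subseteq V$ lies in $V\cap U$ and has the same index over $S$, which forces $S''=V\cap U$. Your extra checks (that $V\cap U$ is an order, and that $S'\neq U$ since $[U:S]=q^2$) are harmless details that the paper leaves implicit.
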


\begin{proof}
If \(V\supsetneq S\) and \(U\nsubseteq V\), then
Lemma~\ref{lem:first-layer-intersection} gives
\([V\cap U:S]=q\), so \(V\cap U\) is one of the indicated overorders.
Conversely, if \(S'\subseteq V\) and \(U\nsubseteq V\), the same lemma
shows that \(V\cap U\) has index \(q\) over \(S\).  Since
\(S'\subseteq V\cap U\) and \([S':S]=q\), one has \(V\cap U=S'\).
\end{proof}

\begin{lemma}
\label{lem:directed-lattice-decomposition}
We have 
\begin{equation}\label{eq:directed-lattice-decomposition}
\mathscr D(S,S_\infty;t)
=C_S(t)+qt^2
\sum_{\bar B(\bar z,1)=0}\mathscr D(S_z,U;t).
\end{equation}
\end{lemma}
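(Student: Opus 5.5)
We prove \eqref{eq:directed-lattice-decomposition} by partitioning the overorders of \(S\) and reading off their contributions to \eqref{eq:local-overorder-t-series}. By definition \eqref{eq:directed-overorder-series}, \(\mathscr D(S,S_\infty;t)\) is the sum of \(q^{\delta(T/S)}t^{2\delta(T/S)}C_T(t)\) over the overorders \(T\supseteq S\) that do not contain \(S_\infty\). This requires a small check. Since \([S_\infty:S]=q\), we have \(\delta(T/S)=1+\delta(T/S_\infty)\) for \(T\supseteq S_\infty\). Hence \(qt^2\widetilde L_{S_\infty}(t)\) is exactly the part of \(\widetilde L_S(t)\) indexed by overorders containing \(S_\infty\). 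The same argument applies to each \(\mathscr D(S_z,U;t)\): it is the sum over overorders \(T\supseteq S_z\) not containing \(U\), weighted relative to \(S_z\).

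\emph{Step 1 (isolate \(T=S\)).} The term \(T=S\) contributes \(C_S(t)\). Every other overorder not containing \(S_\infty\) is a strict overorder \(V\supsetneq S\).

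\emph{Step 2 (partition the strict overorders).} Any \(V\supsetneq S\) with \(S_\infty\nsubseteq V\) also satisfies \(U\nsubseteq V\), because \(S_\infty\subset U\). We apply Lemma~\ref{lem:first-layer-overorder-partition}, with \(M=\Z_q x\oplus\Z_q y\), to \(S\) and \(U\). This gives a unique index-\(q\) overorder \(S'=V\cap U\) with \(S\subsetneq S'\subset U\). The index-\(q\) overorders of \(S\) inside \(U\) are all the index-\(q\) overorders of \(S\), since each \(S_{[\bar r:\bar s]}\subset U\). They are therefore \(S_\infty\) together with \(S_z\) for the roots \(\bar z\) of \(\bar B(T,1)\), by the projective-root parametrization preceding Lemma~\ref{lem:index-q-overorder-joins}. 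The case \(V\cap U=S_\infty\) is excluded, since \(S_\infty\nsubseteq V\). So \(V\cap U=S_z\) for a unique finite root \(\bar z\).

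Conversely, let \(V\supseteq S_z\) with \(U\nsubseteq V\). If \(V\) contained \(S_\infty\), it would contain \(S_z+S_\infty=U\) by Lemma~\ref{lem:index-q-overorder-joins}, which is a contradiction. So \(V\) lies in the part indexed by \(S_z\) and does not contain \(S_\infty\). The parts indexed by distinct \(\bar z\) are disjoint, again by uniqueness in Lemma~\ref{lem:first-layer-overorder-partition}. Thus the strict overorders not containing \(S_\infty\) are in bijection with \(\bigsqcup_{\bar z}\{V\supseteq S_z:\ U\nsubseteq V\}\).

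\emph{Step 3 (weights).} For \(V\supseteq S_z\), the relation \([S_z:S]=q\) gives
\[
q^{\delta(V/S)}t^{2\delta(V/S)}=qt^2\cdot q^{\delta(V/S_z)}t^{2\delta(V/S_z)}.
\]
Summing over the part indexed by \(\bar z\) yields \(qt^2\mathscr D(S_z,U;t)\), using the description of \(\mathscr D(S_z,U;t)\) from the first paragraph with \([U:S_z]=q\) (Lemma~\ref{lem:index-q-overorder-joins}). Adding \(C_S(t)\) from Step 1 gives the identity.

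The main obstacle is Step 2, specifically confirming that the index-\(q\) overorders of \(S\) inside \(U\) are exactly \(S_\infty\) and the \(S_z\). We also need that distinct roots \(\bar z\) give distinct \(S_z\). This holds because the lines \(\mathbb F_q(\bar z\bar x+\bar y)\) in \(M/qM\) are distinct, and the bijection of Proposition~\ref{prop:universal-square-zero-branching} is stated for lines. The remaining steps are bookkeeping.
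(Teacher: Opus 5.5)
Your proposal is correct and follows essentially the same route as the paper: isolate \(T=S\), use \(S_\infty\subseteq U\) together with Lemma~\ref{lem:first-layer-overorder-partition} to index the remaining overorders by \(V\cap U=S_z\), get the converse from \(S_z+S_\infty=U\), and account for the weights by additivity of \(\delta\) using \([S_z:S]=[U:S_z]=q\). Your extra checks are details the paper leaves implicit: that every index-\(q\) overorder \(S_{[\bar r:\bar s]}\) lies in \(U\), and that distinct finite roots give distinct \(S_z\).
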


\begin{proof}
Since \(S_\infty\subseteq U\), an overorder not containing
\(S_\infty\) cannot contain \(U\).  The projective-root parametrization
preceding Lemma~\ref{lem:index-q-overorder-joins}, together with
Lemma~\ref{lem:first-layer-overorder-partition}, partitions the strict
overorders not containing \(S_\infty\) by the finite roots
\(\bar z\) of \(\bar B(T,1)\).  The part indexed by \(\bar z\) consists
of the overorders containing \(S_z\) but not \(U\): the converse follows
from \(S_z+S_\infty=U\).

Since \([S_\infty:S]=q\), the additivity of \(\delta\) gives
\(\delta(T/S)=1+\delta(T/S_\infty)\) for \(T\supseteq S_\infty\).
Using \eqref{eq:local-overorder-t-series}, we therefore obtain
\[
\mathscr D(S,S_\infty;t)
=
\sum_{\substack{S\subseteq T\subseteq\CO_E\\
T\nsupseteq S_\infty}}
q^{\delta(T/S)}t^{2\delta(T/S)}C_T(t).
\]
Separate the term \(T=S\), and partition the remaining terms according
to the unique finite root \(\bar z\) for which \(T\cap U=S_z\).  This
gives
\[
\mathscr D(S,S_\infty;t)
=C_S(t)+
\sum_{\bar B(\bar z,1)=0}
\ \sum_{\substack{S_z\subseteq T\subseteq\CO_E\\
T\nsupseteq U}}
q^{\delta(T/S)}t^{2\delta(T/S)}C_T(t).
\]
For \(T\supseteq S_z\), one has
\(\delta(T/S)=1+\delta(T/S_z)\).  Since
\([U:S_z]=q\) by Lemma~\ref{lem:index-q-overorder-joins}, one also has
\(\delta(T/S_z)=1+\delta(T/U)\) whenever \(T\supseteq U\).  Therefore
the inner sum for \(\bar z\) is
\[
qt^2\bigl(\widetilde L_{S_z}(t)-qt^2\widetilde L_U(t)\bigr)
=qt^2\mathscr D(S_z,U;t).
\]
Summing this identity over the finite roots \(\bar z\) and adding
\(C_S(t)\) proves \eqref{eq:directed-lattice-decomposition}.
\end{proof}

\begin{definition}
\label{def: root-types}
For a root \(\bar z\) of \(\bar B(T,1)\), choose a lift
\(z\in\Z_q\), and write \(B_X=\partial B/\partial X\).  We use the
following three mutually exclusive types:
\begin{align*}
\mathcal R_{\mathrm{simp}}(B)
&:=\{\bar z:\bar B(\bar z,1)=0,\ \overline{B_X}(\bar z,1)\ne0\},\\
\mathcal R_{\mathrm{obs}}(B)
&:=\{\bar z:\bar B(\bar z,1)=\overline{B_X}(\bar z,1)=0,\
B(z,1)\not\equiv0\pmod {q^2}\},\\
\mathcal R_{0}(B)
&:=\{\bar z:\bar B(\bar z,1)=\overline{B_X}(\bar z,1)=0,\
B(z,1)\equiv0\pmod {q^2}\}.
\end{align*} 
We call the elements of \(\mathcal R_{\mathrm{simp}}(B)\),
\(\mathcal R_{\mathrm{obs}}(B)\), and \(\mathcal R_0(B)\) the simple,
obstructed multiple, and liftable multiple roots of \(\bar B(T,1)\),
respectively.
\end{definition}
For a univariate cubic \(P\in\Z_q[T]\), all the preceding notation applied to
\(P\) means the corresponding notation for its homogenization
\(P^{\mathrm h}(X,Y):=Y^3P(X/Y)\).  In particular, if
\(\bar z\in\mathcal R_0(P)\), choose a lift \(z\in\Z_q\) and put
\[
P_z^+(T):=q^{-2}P(z+qT)\in\Z_q[T].
\]
Its homogenization is \((P^{\mathrm h})_z^+\).

These conditions do not depend on the lift of \(\bar z\). For
\(\bar z\in\mathcal R_0(B)\), put
\begin{equation}\label{eq:directed-hensel-rescaling}
B_z^+(X,Y):=q^{-2}B(zY+qX,Y)\in\Z_q[X,Y].
\end{equation}
Explicitly,
\begin{equation}\label{eq:directed-hensel-rescaling-expanded}
B_z^+(T,1)=qaT^3+(3az+b)T^2
+\frac{B_X(z,1)}qT+\frac{B(z,1)}{q^2},
\end{equation}
which also proves the asserted integrality.

\begin{lemma}\label{lem:directed-index-q-overorder-chart}
Let \(\bar z\) be a root of \(\bar B(T,1)\).
\begin{enumerate}[label=\textup{(\roman*)}, leftmargin=2em]
\item If \(\bar z\in\mathcal R_{\mathrm{simp}}(B)\), then \(S_z\) is a
product of \(\Z_q\) and a quadratic order whose residue field is
\(\mathbb F_q\).
\item If \(\bar z\in\mathcal R_{\mathrm{obs}}(B)\), then \(S_z\) is
monogenic with triple-root reduction, and \(U\) is its unique
index-\(q\) overorder.
\item If \(\bar z\in\mathcal R_0(B)\), then \(S_z\) again has square-zero
special fiber.  Put \(\eta_z=(zx+y)/q\).
Then \(1,x,\eta_z\) is a good basis of \(S_z\), its binary cubic form is
\(qB_z^+(X,Y)\), and \(U=S_z+\Z_q(x/q)\).
Applying the preceding projective-root parametrization to \(S_z\), this
index-\(q\) overorder corresponds to \([1:0]\), a root of
\(\overline{B_z^+}(X,Y)\).
\end{enumerate}
\end{lemma}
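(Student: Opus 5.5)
Throughout I use the presentation of \(S_z\) from Remark~\ref{rem:projective-root-action}. The basis \(1,x,\eta_z\) with \(\eta_z=(zx+y)/q\) corresponds to \(g_z^{-1}e\), up to ordering and sign. I compute the multiplication table of \(S_z\) directly from the relations for \(x,y\) and reduce it modulo \(qS_z\). Since \(x\eta_z=(zx^2+xy)/q\) and \(x^2,xy\in qS\), every product lies in \(\Z_q\langle 1,x,\eta_z\rangle\). Dividing by \(q\) gives
\[
x^2=-q^2ac+qbx-qay,\qquad x\eta_z\in q\Z_q\langle1,x,y\rangle,
\]
and \(\eta_z^2=q^{-2}(z^2x^2+2zxy+y^2)\). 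Substituting \(y=q\eta_z-zx\), the coefficients of \(\eta_z^2\) in the basis \(1,x,\eta_z\) are expressed through \(B(z,1)/q\), \(B_X(z,1)\), \(3az+b\), and so on. The cleanest route is to apply the Delone--Faddeev table \eqref{eq:delone-faddeev-multiplication-table} to the transformed form \((qB)^{g_z}(X,Y)=q^{-1}B(qX+zY,Y)\), possibly after swapping the order of the two basis vectors. Its coefficients are
\[
(q^2a,\ q(3az+b),\ B_X(z,1),\ B(z,1)/q),
\]
in the appropriate order. Call this form \(f_z\). By Remark~\ref{rem:projective-root-action}, \(1,x,\eta_z\) is a good basis of \(S_z\) with form \(f_z\), up to sign conventions.

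With \(f_z\) in hand, I read off the reduction \(S_z/qS_z\) from the table modulo \(q\). Since \(\bar a\) multiplies \(q^2\), and \(3az+b\) carries a factor \(q\), all products of \(\bar x\) with anything vanish modulo \(q\), apart from the terms coming from the last two coefficients. Explicitly, \(\bar x^2=0\) and \(\bar x\bar\eta_z=0\), while \(\bar\eta_z^2\) is a combination, with coefficients \(\overline{B_X(z,1)}\) and \(\overline{B(z,1)/q}\), of \(\bar\eta_z\) and \(\bar x\).

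For case (i), \(\overline{B_X}(\bar z,1)\neq0\). Then \(\bar\eta_z^2=\lambda\bar\eta_z+\mu\bar x\) with \(\lambda\ne0\). Exactly as in Proposition~\ref{prop:universal-square-zero-index-q-overorder-type}(a), the element \(\lambda^{-1}\bar\eta_z+\lambda^{-2}\mu\bar x\) is a nontrivial idempotent. Lifting it gives \(S_z\simeq\Z_q\times B'\). The quadratic factor has residue algebra generated by the class of \(\bar x\) with \(\bar x^2=0\), so its residue field is \(\mathbb F_q\).

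For case (ii), \(\lambda=0\) and \(\mu\neq0\), so \(\bar x=\mu^{-1}\bar\eta_z^2\) and \(\bar\eta_z^3=0\). By Nakayama, \(S_z=\Z_q[\eta_z]\), and the minimal polynomial of \(\eta_z\) has reduction \(T^3\). Lemma~\ref{lem:index-q-overorder-joins} gives \([U:S_z]=q\). The uniqueness of the index-\(q\) overorder in Proposition~\ref{prop:first-overorder-single-residue} then identifies \(U\) as that overorder. For this I must check that \(S_z\) is not maximal, which holds because \(U\supsetneq S_z\).

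For case (iii), \(\lambda=\mu=0\), so \((\bar x,\bar\eta_z)^2=0\) and \(S_z\) has square-zero special fiber. Its form \(f_z\) is then divisible by \(q\), and \(f_z/q\) has coefficients
\[
(qa,\ 3az+b,\ B_X(z,1)/q,\ B(z,1)/q^2).
\]
Comparing with \eqref{eq:directed-hensel-rescaling-expanded}, this is exactly \(B_z^+\) up to the ordering of the variables. So the form attached to \(S_z\) is \(qB_z^+\). Finally, \(U=S_z+\Z_q(x/q)\) is immediate from the bases. The leading coefficient of \(B_z^+\) is \(qa\equiv0\), so \([1:0]\) is a root of \(\overline{B_z^+}\). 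Under the parametrization, \([1:0]\) corresponds to adjoining \(x/q\), i.e.\ to \(U\).

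The main obstacle is the bookkeeping in the first step. I need to pin down the order and sign of the basis so that the Delone--Faddeev form comes out literally as \(qB_z^+(X,Y)\) with \(X\leftrightarrow x\), rather than as a swapped or negated version. I would verify this by directly expanding \(\eta_z^2\) and \(x\eta_z\) and matching against \eqref{eq:delone-faddeev-multiplication-table}, using the remark that \(B\) and \(-B\) give the same parametrization to absorb any sign.
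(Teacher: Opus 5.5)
Your argument is correct in substance and follows the paper's route. You transport the form by $g_z$ to get $(qB)^{g_z}=q^{-1}B(qX+zY,Y)=qB_z^+$. Your coefficient list $(q^2a,\,q(3az+b),\,B_X(z,1),\,B(z,1)/q)$ is right, and no swap of variables or change of sign is needed. You then read the special fiber off the reduced Delone--Faddeev table and split according to $\overline{B_X(z,1)}$ and $\overline{B(z,1)/q}$. The paper instead translates $B(X,Y)\mapsto B(X+zY,Y)$ to reduce to $z=0$ and computes the table of $\Z_q\langle 1,x,y/q\rangle$ by hand. In case (ii) the paper finds $U$ as the overorder attached to the unique square-zero line $\mathbb F_q\bar x$ of $\mathbb F_q[\bar\eta]/(\bar\eta^3)$. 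You instead combine $[U:S_z]=q$ from Lemma~\ref{lem:index-q-overorder-joins} with the uniqueness in Proposition~\ref{prop:first-overorder-single-residue}, with non-maximality coming from $U\supsetneq S_z$. Both work.

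One step is wrong as written, though it is easily repaired. The claim $x\eta_z\in q\Z_q\langle 1,x,y\rangle$ is false. Expanding $x\eta_z=(zx^2+xy)/q$ gives
\[
x\eta_z=-q(acz+ad)+z(az+b)\,x-qaz\,\eta_z .
\]
So $1,x,\eta_z$ is not literally a good basis. In $S_z/qS_z$ one has $\bar x\bar\eta_z=\overline{z(az+b)}\,\bar x$, which is nonzero whenever $\bar b\bar z\neq0$. For example, $B=X^2Y-Y^3$ with $z=1$ gives $x\eta_z=x$ and $\eta_z^2=1$.

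The Delone--Faddeev table for $qB_z^+$ holds for the good lifts $x'=x+qaz$ and $\eta_z'=\eta_z-z(az+b)$. So the relations $\bar x^2=\bar x\bar\eta_z'=0$ and $(\bar\eta_z')^2=\overline{B(z,1)/q}\,\bar x-\overline{B_X(z,1)}\,\bar\eta_z'$, your idempotent, and the Nakayama step must use $\bar\eta_z'$ in place of $\bar\eta_z$. Likewise, in case (ii) it is $\eta_z'$ whose characteristic polynomial reduces to $T^3$.

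Since $x',\eta_z'$ differ from $x,\eta_z$ by scalars, nothing else changes. $S_z$ is the same, the binary cubic depends only on the basis of $S_z/\Z_q$, and $S_z+\Z_q(x'/q)=S_z+\Z_q(x/q)=U$. The bookkeeping issue you anticipated is therefore a scalar translation, not one of order or sign. The lemma's own phrase ``$1,x,\eta_z$ is a good basis'' has the same looseness. The paper's proof avoids it by working at $z=0$, where $x\cdot(y/q)=-qad\in\Z_q$.
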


\begin{proof}
With \(g_z\) as in Remark~\ref{rem:projective-root-action}, the Delone--Faddeev
transformation law and \eqref{eq:directed-hensel-rescaling} give
\[
(qB)^{g_z}(X,Y)=\frac1qB(qX+zY,Y)=qB_z^+(X,Y),
\]
while \(g_z^{-1}(x,y)^{\mathsf t}=(x,\eta_z)^{\mathsf t}\).  This proves
the assertion about the basis and its binary cubic.  Moreover,
\(U=S_z+\Z_q(x/q)\) by
Lemma~\ref{lem:index-q-overorder-joins}.  It remains to determine the
special fiber of \(S_z\).

Replace \(B(X,Y)\) by \(B(X+zY,Y)\); for the transformed form, the chosen
projective root modulo \(q\) is \([0:1]\).  We may therefore assume
\(z=0\) and write
\(B(T,1)=aT^3+bT^2+cT+d\).  Thus \(d=qd_1\) for some
\(d_1\in\Z_q\).  Applying
\eqref{eq:delone-faddeev-multiplication-table} to \(qB(X,Y)\),  the elements
\(1,x,y\)  form a good basis:
\[
x^2=-q^2ac+qbx-qay,
\qquad xy=-q^2ad,
\qquad y^2=-q^2bd+qdx-qcy.
\]
By the definition above,
\(S_0=\Z_q\langle1,x,\eta\rangle\), where \(\eta=y/q\).  Hence
\begin{equation}\label{eq:index-q-overorder-chart-exact-table}
x^2=-q^2ac+qbx-q^2a\eta,
\qquad x\eta=-qad,
\qquad \eta^2=-bd+d_1x-c\eta.
\end{equation}
Modulo \(q\), this becomes
\begin{equation}\label{eq:index-q-overorder-chart-table}
\bar x^2=\bar x\bar\eta=0,
\qquad
\bar\eta^2=\bar d_1\bar x-\bar c\bar\eta.
\end{equation}

If \(\bar c\ne0\), then
\[
e=-\bar c^{-1}\bar\eta+
\bar d_1\bar c^{-2}\bar x
\]
is a nontrivial idempotent.  Idempotents lift over the complete local base,
so \(S_0\) is a product of \(\Z_q\) and a quadratic order.  Moreover
\(e\bar x=0\), so the rank-two factor contains the nonzero
square-zero element \(\bar x\) and has residue field \(\mathbb F_q\).  If
\(\bar c=0\) and \(\bar d_1\ne0\), then
\eqref{eq:index-q-overorder-chart-table} identifies the residual algebra with
\(\mathbb F_q[\bar\eta]/(\bar\eta^3)\), with
\(\bar x=\bar d_1^{-1}\bar\eta^2\).  Hence \(S_0=\Z_q[\eta]\), and the
unique square-zero line in its residual algebra is spanned by \(\bar x\).
The corresponding index-\(q\) overorder is
\[
S_0+\Z_q\frac{x}{q}
=\Z_q\langle1,x/q,y/q\rangle=U.
\]
Finally, if \(\bar c=\bar d_1=0\), then
\eqref{eq:index-q-overorder-chart-table} has square-zero maximal ideal.  These are
exactly the three cases listed.

In case (iii), write \(c=qc_1\) and \(d=q^2d_2\).  Substituting these
identities into \eqref{eq:index-q-overorder-chart-exact-table} and
comparing with \eqref{eq:delone-faddeev-multiplication-table} identifies
that multiplication table with the one attached to the binary cubic
\[
q\left(qaX^3+bX^2Y+c_1XY^2+d_2Y^3\right)
=qB_0^+(X,Y).
\]
The coefficient of \(X^3\) in \(B_0^+\) is divisible by \(q\), so
\([1:0]\) is a root of \(\overline{B_0^+}\).  Under Lemma~\ref{lem:index-q-overorder-joins}, it corresponds to \(S_0+\Z_q(x/q)=U\).
Reversing the change of variables proves all three assertions for \(z\).
\end{proof}

The binary-cubic discriminants are related by 
\begin{equation}\label{eq:directed-discriminant-drop}
\Delta(B_z^+)=q^{-2}\Delta(B).
\end{equation}

Define a power series \(\mathscr D_B(t)\) recursively by
\begin{equation}\label{eq:directed-series-recursion}
\mathscr D_B(t)
=C_\square(t)
+qt^2\frac{\#\mathcal R_{\mathrm{simp}}(B)}{(1-t)^2}
+qt^2\frac{\#\mathcal R_{\mathrm{obs}}(B)}{1-t}
+qt^2\sum_{\bar z\in\mathcal R_0(B)}
\mathscr D_{B_z^+}(t).
\end{equation}
This is a finite recursion: every recursive step lowers
\(v_q(\Delta(B))\) by \(2\), while all cubics occurring in the recursion have
nonzero discriminant.

\begin{proposition}
\label{prop:directed-series-overorders}
In the situation above,
\begin{equation}\label{eq:directed-series-overorders}
\mathscr D_B(t)=\widetilde L_S(t)-qt^2\widetilde L_{S_\infty}(t).
\end{equation}
\end{proposition}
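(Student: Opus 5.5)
We argue by induction on \(v_q(\Delta(B))\), which is finite because \(\Delta(qB)\ne0\) and drops by \(2\) at each recursive step by \eqref{eq:directed-discriminant-drop}. The right side of \eqref{eq:directed-series-overorders} is, by definition \eqref{eq:directed-overorder-series}, the directed series \(\mathscr D(S,S_\infty;t)\). Lemma~\ref{lem:directed-lattice-decomposition} gives
\[
\mathscr D(S,S_\infty;t)=C_S(t)+qt^2\sum_{\bar B(\bar z,1)=0}\mathscr D(S_z,U;t),
\]
and \(C_S(t)=C_\square(t)\) by \eqref{eq:square-zero-c-series}. Comparing with the recursion \eqref{eq:directed-series-recursion}, it remains to evaluate each \(\mathscr D(S_z,U;t)\) according to the type of \(\bar z\) in Definition~\ref{def: root-types}, using Lemma~\ref{lem:directed-index-q-overorder-chart}.

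\emph{Simple roots.} By Lemma~\ref{lem:directed-index-q-overorder-chart}(i), \(S_z\simeq\Z_q\times B'\) with \(B'\) quadratic with residue field \(\mathbb F_q\). The overorders of \(S_z\) are \(\Z_q\times T\) with \(T\) running over the chain \(\CO_n\subset\cdots\subset\CO_0\) of overorders of \(B'\). Hence the overorders not containing the index-\(q\) overorder \(U\) reduce to \(S_z\) alone, because every strict overorder of \(\CO_n\) contains \(\CO_{n-1}\), and \(U\) must be \(\Z_q\times\CO_{n-1}\), the unique index-\(q\) overorder. Since \(S_z\) is Gorenstein with two residue fields \(\mathbb F_q\), we get \(\mathscr D(S_z,U;t)=C_{S_z}(t)=(1-t)^{-2}\).

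\emph{Obstructed roots.} By Lemma~\ref{lem:directed-index-q-overorder-chart}(ii), \(S_z\) is monogenic with triple-root reduction and \(U\) is its unique index-\(q\) overorder. Proposition~\ref{prop:first-overorder-contained} shows that every strict overorder of \(S_z\) contains \(U\). Hence \(\mathscr D(S_z,U;t)=C_{S_z}(t)=(1-t)^{-1}\), since \(S_z\) is Gorenstein and local with residue field \(\mathbb F_q\).

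\emph{Liftable roots.} By Lemma~\ref{lem:directed-index-q-overorder-chart}(iii), \(S_z\) has square-zero special fiber with good basis \(1,x,\eta_z\) and form \(qB_z^+\). The root \([1:0]\) of \(\overline{B_z^+}\) corresponds to \(S_z+\Z_q(x/q)=U\), which is precisely the ``\(S_\infty\)'' of the new situation. Since \(v_q(\Delta(B_z^+))<v_q(\Delta(B))\), the induction hypothesis gives \(\mathscr D(S_z,U;t)=\mathscr D_{B_z^+}(t)\). The orientation hypothesis \(a\in q\Z_q\) holds for \(B_z^+\) because its \(X^3\)-coefficient is \(qa\).

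Summing the three types gives \eqref{eq:directed-series-recursion}. For the base case, if no liftable roots occur, the identity follows directly from the three computations above, so a separate base case is needed only formally.

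\emph{Main obstacle.} The delicate point is the induction hypothesis itself. Since \(\mathscr D_B\) is defined recursively from \(B\) alone, we must check that the recursion is independent of the lift \(z\) and of the good basis; this affects only \(B_z^+\) up to \(\GL_2(\Z_q)\)-type changes fixing \([1:0]\), under which the root types are invariant. The root types are defined in the \(T\)-chart and exclude \([1:0]\), which is consistent.
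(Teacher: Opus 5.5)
Your proposal is correct and follows the paper's proof essentially step for step. Both argue by induction on \(v_q(\Delta(B))\) and use Lemma~\ref{lem:directed-lattice-decomposition} to reduce to computing \(\mathscr D(S_z,U;t)\) for each finite root: simple and obstructed roots are terminal, giving \((1-t)^{-2}\) and \((1-t)^{-1}\), and liftable roots are handled by the induction hypothesis via Lemma~\ref{lem:directed-index-q-overorder-chart}(iii). The lift-independence you single out as the main obstacle is not a separate step. The induction proves the identity for every choice of lifts, and the right-hand side depends only on the pair \(S\subset S_\infty\) (note also that \(S_z\) itself does not depend on the lift), so well-definedness of \(\mathscr D_B\) comes out as a byproduct.
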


\begin{proof}
We induct on \(v_q(\Delta(B))\).  By
\eqref{eq:square-zero-c-series}, \(C_S=C_\square\).  Hence
Lemma~\ref{lem:directed-lattice-decomposition} reduces the assertion to
evaluating, for each root \(\bar z\in\mathbb F_q\) of
\(\bar B(T,1)\),
\[
\mathscr D(S_z,U;t)
=\widetilde L_{S_z}(t)-qt^2\widetilde L_U(t).
\]

If \(\bar z\) is simple, Lemma
\ref{lem:directed-index-q-overorder-chart} shows that \(S_z\) is a
product of \(\Z_q\) with a quadratic order.  By
\cite[Proposition~3.10]{Malors}, \(U\) is the unique index-\(q\)
overorder of \(S_z\) and is contained in every strict overorder.  Hence
\[
\mathscr D(S_z,U;t)
=C_{S_z}(t)=\frac1{(1-t)^2}.
\]
If \(\bar z\) is multiple with nonzero obstruction, the same lemma makes
\(S_z\) monogenic with one residual factor and identifies \(U\) with
its first overorder.  Proposition~\ref{prop:first-overorder-contained}
shows that every strict overorder contains \(U\).  Therefore
\[
\mathscr D(S_z,U;t)
=C_{S_z}(t)=\frac1{1-t}.
\]

Finally, if \(\bar z\in\mathcal R_0(B)\), Lemma
\ref{lem:directed-index-q-overorder-chart} gives \(S_z\) the binary cubic
\(qB_z^+\) and identifies \(U\) with the index-\(q\) overorder
corresponding to \([1:0]\).  Since
\(v_q(\Delta(B_z^+))=v_q(\Delta(B))-2\), the induction hypothesis gives
\[
\mathscr D(S_z,U;t)=\mathscr D_{B_z^+}(t).
\]
Substitution in
\eqref{eq:directed-lattice-decomposition} gives
\eqref{eq:directed-series-recursion}, and hence
\eqref{eq:directed-series-overorders}.
\end{proof}

For any integral binary cubic \(B(X,Y)\) of nonzero discriminant, define
\begin{equation}\label{eq:branch-sum-definition}
\mathscr B_B(t):=
\frac{\#\mathcal R_{\mathrm{simp}}(B)}{(1-t)^2}
+\frac{\#\mathcal R_{\mathrm{obs}}(B)}{1-t}
+\sum_{\bar z\in\mathcal R_0(B)}\mathscr D_{B_z^+}(t).
\end{equation}

Let
\(\Phi(T)=T^3+uT^2+vT+w\in\Z_q[T]\)
have nonzero discriminant, and set
\[
B_\Phi(X,Y):=X^3+uX^2Y+vXY^2+wY^3.
\]
Let \(E:=\Q_q[T]/(\Phi)\), and let \(\alpha\in E\) be the image of \(T\).
Put
\[
U:=\Z_q[\alpha],
\qquad
S:=\Z_q+qU
=\Z_q\langle1,q\alpha,q\alpha^2\rangle .
\]
Finally, put
\[
x:=-q\alpha,
\qquad
y:=-q(\alpha^2+u\alpha+v),
\qquad
\mathscr B_\Phi(t):=\mathscr B_{B_\Phi}(t).
\]

\begin{theorem}[Overorder decomposition]
\label{thm:q-scaled-overorder-decomposition}
With the preceding notation, \(1,x,y\) is a good basis of \(S\), its
associated binary cubic is \(qB_\Phi\), and
\[
U=\Z_q\langle1,x/q,y/q\rangle .
\]
Moreover, \(S\) has square-zero special fiber in the sense of
Definition~\ref{def:square-zero-special-fiber}, \([U:S]=q^2\), and
\begin{equation}\label{eq:q-scaled-overorder-decomposition}
\widetilde L_S(t)
=C_\square(t)+qt^2\mathscr B_\Phi(t)+q^2t^4\widetilde L_U(t).
\end{equation}
\end{theorem}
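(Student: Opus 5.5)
The first step is to check the structural claims by direct computation in \(U=\Z_q[\alpha]\). With \(x=-q\alpha\) and \(y=-q(\alpha^2+u\alpha+v)\), we have \(S=\Z_q\langle 1,x,y\rangle\), since the change of basis from \(1,q\alpha,q\alpha^2\) is unitriangular up to sign. Likewise \(U=\Z_q\langle1,x/q,y/q\rangle\), so \([U:S]=q^2\). Next, \(\Phi(\alpha)=0\) gives \(\alpha(\alpha^2+u\alpha+v)=-w\), hence
\[
xy=q^2\alpha(\alpha^2+u\alpha+v)=-q^2w\in\Z_q .
\]
Thus the basis is good. Expanding \(x^2=q^2\alpha^2=q(-y)\cdot(-1)\cdots\), we write
\[
x^2=-qy+qux-q^2v\cdot(\ldots),
\]
and similarly for \(y^2\). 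Comparing with \eqref{eq:delone-faddeev-multiplication-table}, read with the form \(q(aX^3+bX^2Y+cXY^2+dY^3)\), we expect \(qa=q\), \(qb=qu\), \(qc=qv\), \(qd=qw\). Equivalently, \(x^2=-q^2v+qux-qy\), \(xy=-q^2w\), \(y^2=-q^2uw+qwx-qvy\). I will verify these three identities directly, fixing signs if needed by replacing \(B_\Phi\) by \(-B_\Phi\), which is harmless as noted before Lemma~\ref{lem:index-q-overorder-joins}. All products of \(x\) and \(y\) then lie in \(qS\), so \(S/qS=\mathbb F_q\oplus M\) with \(M=\langle\bar x,\bar y\rangle\) and \(M^2=0\). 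This gives the square-zero special fiber.

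For the identity \eqref{eq:q-scaled-overorder-decomposition}, the form \(B_\Phi\) has leading coefficient \(1\). Hence \([1:0]\) is \emph{not} a root of \(\bar B_\Phi\), and the directed construction preceding Lemma~\ref{lem:index-q-overorder-joins} (which assumed \(a\in q\Z_q\)) does not apply as stated. Instead I will partition the overorders of \(S\) directly. Every overorder \(T\) either contains \(U\), or equals \(S\), or is a strict overorder not containing \(U\). By Lemma~\ref{lem:first-layer-overorder-partition}, applied with \(M=\Z_q x\oplus\Z_q y\) so that \(U=\Z_q\oplus q^{-1}M\), each \(T\) of the last kind is indexed by the unique index-\(q\) overorder \(T\cap U\). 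By the root parametrization these are the orders \(S_z\), where \(\bar z\) runs over the projective roots of \(\bar B_\Phi\), all finite.

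The terms with \(T\supseteq U\) contribute \(q^2t^4\widetilde L_U(t)\), since \(\delta(T/S)=2+\delta(T/U)\). The term \(T=S\) contributes \(C_S=C_\square\) by \eqref{eq:square-zero-c-series}. Repeating the index bookkeeping from the proof of Lemma~\ref{lem:directed-lattice-decomposition}, the part indexed by \(\bar z\) contributes \(qt^2\mathscr D(S_z,U;t)\), using \([U:S_z]=q\) from Lemma~\ref{lem:index-q-overorder-joins}. That lemma's proof did not use the root at infinity, apart from \(S_\infty\subseteq U\), which we do not need here.

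The remaining step is to evaluate \(\mathscr D(S_z,U;t)\) by case. For this I will invoke Lemma~\ref{lem:directed-index-q-overorder-chart} and the argument of Proposition~\ref{prop:directed-series-overorders}, noting that neither uses \(a\in q\Z_q\) for the root \(\bar z\) itself.
\begin{enumerate}[label=\textup{(\roman*)}, leftmargin=2em]
\item For a simple root, \(S_z\) is \(\Z_q\times\)(a quadratic order with residue field \(\mathbb F_q\)), and \(U\) is its unique minimal overorder. This gives \((1-t)^{-2}\).
\item For an obstructed root, Proposition~\ref{prop:first-overorder-contained} gives \((1-t)^{-1}\).
\item For a liftable root, \(S_z\) has binary cubic \(qB_z^+\) with \([1:0]\) a root corresponding to \(U\). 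Proposition~\ref{prop:directed-series-overorders} then yields \(\mathscr D_{B_z^+}(t)\).
\end{enumerate}
Summing these gives \(qt^2\mathscr B_\Phi(t)\).

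I expect the main obstacle to be bookkeeping rather than ideas. First, the sign conventions in matching the multiplication table to \(qB_\Phi\) must come out right. Second, in case (iii) the chart lemma must be applied with \(z\) an arbitrary lift. Its proof translated to \(z=0\) via \(B(X+zY,Y)\), and one must confirm that this translation keeps \(U\) fixed. It does, because \(x/q\) and \((zx+y)/q\) still span \(U/S\).
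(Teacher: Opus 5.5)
Your proposal is correct and follows essentially the same route as the paper. You verify the Delone--Faddeev table for $1,x,y$, and you partition the overorders of $S$ into $S$ itself, those containing $U$, and those indexed by $T\cap U=S_z$ via Lemma~\ref{lem:first-layer-overorder-partition}. You then evaluate $\mathscr D(S_z,U;t)$ case by case through Lemma~\ref{lem:directed-index-q-overorder-chart} and the argument of Proposition~\ref{prop:directed-series-overorders}, whose chart computations indeed never use $a\in q\Z_q$. Your garbled expansion of $x^2$ should simply be replaced by the identities $x^2=-q^2v+qux-qy$, $xy=-q^2w$, $y^2=-q^2uw+qwx-qvy$. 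These hold exactly with the stated signs, so no replacement of $B_\Phi$ by $-B_\Phi$ is needed.
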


\begin{proof}
The elements \(1,x,y\) form a \(\Z_q\)-basis of \(S\), and
\(\Phi(\alpha)=0\) gives
\begin{equation}\label{eq:q-scaled-order-multiplication-table}
x^2=-q^2v+qux-qy,
\qquad
xy=-q^2w,
\qquad
y^2=-q^2uw+qwx-qvy.
\end{equation}
In particular, \(xy\in\Z_q\), so the basis is good.
Comparison with \eqref{eq:delone-faddeev-multiplication-table} shows
that the associated binary cubic is \(qB_\Phi\).  In particular,
\((\bar x,\bar y)^2=0\) in \(S/qS\), so \(S\) has square-zero special
fiber.  The  bases also give
\[
U=\Z_q\langle1,x/q,y/q\rangle,
\qquad [U:S]=q^2.
\]
Since \(B_\Phi(1,0)=1\), the point \([1:0]\) is not a root of
\(\bar B_\Phi\).  Its projective roots are therefore exactly the points
\([\bar z:1]\) for which \(\bar\Phi(\bar z)=0\).

The projective-root parametrization preceding
Lemma~\ref{lem:index-q-overorder-joins} and
Lemma~\ref{lem:first-layer-overorder-partition} therefore show that the
strict overorders not containing \(U\) are the disjoint union, over the
roots \(\bar z\) of \(\bar\Phi\), of the overorders containing
\[
S_z:=S+\Z_q\frac{zx+y}{q}
\]
but not \(U\), where \(z\in\Z_q\) is any lift of \(\bar z\).

The overorders containing \(U\) contribute
\(q^2t^4\widetilde L_U(t)\).  For a fixed \(\bar z\), the overorders
containing \(S_z\) but not \(U\) contribute
\(qt^2\mathscr D(S_z,U;t)\), where the factor \(qt^2\) accounts for
\([S_z:S]=q\).  The proof of
Proposition~\ref{prop:directed-series-overorders} gives
\[
\mathscr D(S_z,U;t)=
\begin{cases}
(1-t)^{-2},&\bar z\in\mathcal R_{\mathrm{simp}}(B_\Phi),\\
(1-t)^{-1},&\bar z\in\mathcal R_{\mathrm{obs}}(B_\Phi),\\
\mathscr D_{(B_\Phi)_z^+}(t),&\bar z\in\mathcal R_0(B_\Phi).
\end{cases}
\]
Therefore, by \eqref{eq:branch-sum-definition}, all strict overorders not
containing \(U\) contribute \(qt^2\mathscr B_\Phi(t)\).  Adding the term
\(C_S=C_\square\) for \(S\) itself proves the formula.
\end{proof}

\section{Recursion for Yun's local zeta factor}
\label{sec:yun-local-zeta-recursion}

We next prove the recursion on Yun's side that matches
Theorem~\ref{thm:q-scaled-overorder-decomposition}.  It expresses the
local zeta factor of the suborder generated by \(q\alpha\) in terms of
that of \(\Z_q[\alpha]\).  We use Yun's local zeta factor \(J_R(s)\)
from \eqref{eq:yun-counting-series}.

Let \(\Phi(T)=T^3+uT^2+vT+w\), let \(U=\Z_q[\alpha]\) with
\(\Phi(\alpha)=0\), and put \(A=\Z_q[q\alpha]\).  Thus
\begin{equation}\label{eq:dilated-polynomial}
A\simeq
\Z_q[X]/\bigl(X^3+quX^2+q^2vX+q^3w\bigr),
\qquad [U:A]=q^3.
\end{equation}

Using the notation \(\mathfrak C_\Phi(a,b)\) from
Definition~\ref{def:cluster-set}, define the congruence generating
function associated with \(\Phi\) by
\begin{equation}\label{eq:full-hnf-series}
\mathcal H_\Phi(t):=
\sum_{a,b\ge0}\#\mathfrak C_\Phi(a,b)t^{a+2b}.
\end{equation}
We use the notation \(\mathcal H_\Phi(t)\) in
\eqref{eq:full-hnf-series} for every cubic \(\Phi\in\Z_q[T]\) of nonzero
discriminant, without requiring \(\Phi\) to be monic.  In particular, if
\(\bar z\in\mathcal R_0(\Phi)\) and \(z\in\Z_q\) is a lift, then
\(\mathcal H_{\Phi_z^+}(t)\) is defined for the integral Hensel transform
\[
\Phi_z^+(T)=q^{-2}\Phi(z+qT)
\]
defined in Section~\ref{sec:overorder-recursion}.

Applying Proposition~\ref{prop:single-residue-square-zero-hnf} to
\(U=\Z_q[\alpha]\) with defining polynomial \(\Phi\), and then using the
bijection of Proposition~\ref{prop:scaled-hnf-set-comparison}, gives
\begin{equation}\label{eq:monogenic-full-hnf-series}
J_U(s)=\frac{\mathcal H_\Phi(t)}{1-t^3}.
\end{equation}

For \(k\ge1\) and \(\bar z\in\mathbb F_q\), put
\[
N_{\Phi,\bar z}(k):=
\#\{x\bmod q^k:x\equiv\bar z\pmod q,\
\Phi(x)\equiv0\pmod {q^k}\}.
\]
Let 
\begin{equation}
\label{def: R-Phi}
R_\Phi(t)=\sum_{k\ge0}N_\Phi(k)t^k,
\qquad
R_{\Phi,\bar z}(t)=\sum_{k\ge1}N_{\Phi,\bar z}(k)t^k.
\end{equation}

\begin{lemma}
\label{lem:root-tower-branch-identity}
For every integral cubic \(\Phi\) of nonzero discriminant,
\begin{equation}\label{eq:root-tower-branch-identity}
R_\Phi(t)-1=t(1-t)\mathscr B_\Phi(t).
\end{equation}
\end{lemma}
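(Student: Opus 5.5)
The plan is to split \(R_\Phi(t)-1\) according to the residue class modulo \(q\) of the root and compare it, root by root, with the three kinds of terms in \eqref{eq:branch-sum-definition}. We induct on \(v_q(\Delta(\Phi))\). This is the same induction that makes \eqref{eq:directed-series-recursion} a finite recursion, using \eqref{eq:directed-discriminant-drop} for the Hensel transform \(\Phi_z^+\).

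Every solution of \(\Phi(x)\equiv0\pmod{q^k}\) with \(k\ge1\) reduces to a root \(\bar z\in\mathbb F_q\) of \(\bar\Phi\). Hence
\[
R_\Phi(t)-1=\sum_{\bar\Phi(\bar z)=0}R_{\Phi,\bar z}(t).
\]
With the convention that a univariate cubic means its homogenization, the roots \(\bar z\) occurring in \(\mathscr B_\Phi\) are exactly the finite roots of \(\overline{\Phi^{\mathrm h}}(T,1)=\bar\Phi(T)\). This holds even when \(\Phi\) is not monic, or when \(\bar\Phi\) vanishes identically, in which case every \(\bar z\in\mathbb F_q\) is a root. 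It therefore suffices to prove, for each root \(\bar z\), that \(R_{\Phi,\bar z}(t)\) equals \(t(1-t)\) times the corresponding summand of \(\mathscr B_\Phi(t)\).

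We treat the three types of Definition~\ref{def: root-types} in turn.
\begin{enumerate}[label=\textup{(\roman*)}, leftmargin=2em]
\item \emph{Simple root.} Hensel's lemma gives \(N_{\Phi,\bar z}(k)=1\) for all \(k\ge1\). Hence \(R_{\Phi,\bar z}=t/(1-t)=t(1-t)\cdot(1-t)^{-2}\).
\item \emph{Obstructed multiple root.} Write \(x=z+qy\). Since \(\Phi'(z)\equiv0\pmod q\), we have \(\Phi(z+qy)\equiv\Phi(z)\not\equiv0\pmod{q^2}\). Hence \(N_{\Phi,\bar z}(1)=1\) and \(N_{\Phi,\bar z}(k)=0\) for \(k\ge2\), so \(R_{\Phi,\bar z}=t=t(1-t)\cdot(1-t)^{-1}\).
\item \emph{Liftable multiple root.} Here \(\Phi(z+qy)=q^2\Psi(y)\) with \(\Psi:=\Phi_z^+\). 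Counting \(x=z+qy\) with \(y\) modulo \(q^{k-1}\) gives \(N_{\Phi,\bar z}(1)=1\) and, for \(k\ge2\),
\[
N_{\Phi,\bar z}(k)=q\,N_\Psi(k-2).
\]
The factor \(q\) comes from lifting \(y\) from modulo \(q^{k-2}\) to modulo \(q^{k-1}\), and the case \(k=2\) uses \(N_\Psi(0)=1\). Therefore
\[
R_{\Phi,\bar z}(t)=t+qt^2R_\Psi(t).
\]
\end{enumerate}

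In the liftable case it remains to check that \(t(1-t)\mathscr D_\Psi(t)=t+qt^2R_\Psi(t)\). By \eqref{eq:directed-series-recursion} and \eqref{eq:branch-sum-definition},
\[
\mathscr D_\Psi=C_\square+qt^2\mathscr B_\Psi,\qquad C_\square=\frac{1+qt}{1-t}.
\]
Multiplying by \(t(1-t)\) gives
\[
t(1-t)\mathscr D_\Psi(t)=t+qt^2+qt^2\cdot t(1-t)\mathscr B_\Psi(t).
\]
The induction hypothesis for \(\Psi\), whose discriminant valuation is smaller by \eqref{eq:directed-discriminant-drop}, replaces \(t(1-t)\mathscr B_\Psi\) by \(R_\Psi-1\), and the right side becomes \(t+qt^2R_\Psi(t)\), as required. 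The base of the induction is automatic: if there are no liftable roots, only cases (i) and (ii) occur and no induction hypothesis is needed.

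The main point requiring care is bookkeeping rather than a hard estimate. First, \(\Phi_z^+\) may be non-monic, so one must check that \(\mathscr B\), \(\mathscr D\) and \(R\) are all taken for the same object: \((\Phi^{\mathrm h})_z^+\) agrees with the homogenization of \(\Phi_z^+\), and only finite roots of \(B(T,1)\) enter both sides. Second, the root types and the count \(N_{\Phi,\bar z}\) must be independent of the lift \(z\), which follows from the remark after Definition~\ref{def: root-types}.
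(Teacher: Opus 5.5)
Your proposal is correct and follows the paper's proof essentially step for step. Both use induction on \(v_q(\Delta(\Phi))\), decompose \(R_\Phi(t)-1\) over the roots \(\bar z\) of \(\bar\Phi\), obtain \(t/(1-t)\), \(t\), and \(t+qt^2R_{\Phi_z^+}(t)\) for the three root types, and close with \(t(1-t)\mathscr D_{\Phi_z^+}(t)=t+qt^2R_{\Phi_z^+}(t)\) via the induction hypothesis. Your additional remarks are correct details that the paper leaves implicit: the Taylor-expansion justification in the obstructed case, the compatibility of \((\Phi^{\mathrm h})_z^+\) with the homogenization of \(\Phi_z^+\), and the possibility that \(\overline{\Phi_z^+}\) vanishes identically.
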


\begin{proof}
We argue by induction on \(v_q(\Delta(\Phi))\).  Since every root modulo
\(q^k\) reduces to a root of \(\bar\Phi\),
\begin{equation}\label{eq:root-series-residue-decomposition}
R_\Phi(t)-1=
\sum_{\substack{\bar z\in\mathbb F_q\\ \bar\Phi(\bar z)=0}}
R_{\Phi,\bar z}(t).
\end{equation}
We compare this decomposition with
\eqref{eq:branch-sum-definition}.

If \(\bar z\in\mathcal R_{\mathrm{simp}}(\Phi)\), Hensel's lemma gives
\(N_{\Phi,\bar z}(k)=1\) for every \(k\ge1\).  Hence
\(R_{\Phi,\bar z}(t)=t/(1-t)\).
If \(\bar z\in\mathcal R_{\mathrm{obs}}(\Phi)\), no root modulo \(q^2\)
reduces to \(\bar z\).  Therefore
\(R_{\Phi,\bar z}(t)=t\).

It remains to consider \(\bar z\in\mathcal R_0(\Phi)\).  Choose a lift
\(z\in\Z_q\) .  The substitution
\(x=z+qy\) gives
\[
N_{\Phi,\bar z}(k)=qN_{\Phi_z^+}(k-2)\qquad(k\ge2),
\]
and consequently
\begin{equation}\label{eq:liftable-root-tower}
R_{\Phi,\bar z}(t)=t+qt^2R_{\Phi_z^+}(t).
\end{equation}
Equations \eqref{eq:directed-series-recursion} and
\eqref{eq:branch-sum-definition} give
\[
\mathscr D_{\Phi_z^+}(t)=C_\square(t)+qt^2\mathscr B_{\Phi_z^+}(t).
\]
Moreover,
\(v_q(\Delta(\Phi_z^+))=v_q(\Delta(\Phi))-2\) by
\eqref{eq:directed-discriminant-drop}, so the induction hypothesis
applies to \(\Phi_z^+\).  Using
\(C_\square(t)=(1+qt)/(1-t)\), we obtain
\[
\begin{aligned}
t(1-t)\mathscr D_{\Phi_z^+}(t)
&=t(1-t)C_\square(t)+qt^2\bigl(R_{\Phi_z^+}(t)-1\bigr)\\
&=t(1+qt)+qt^2\bigl(R_{\Phi_z^+}(t)-1\bigr)\\
&=t+qt^2R_{\Phi_z^+}(t)\\
&=R_{\Phi,\bar z}(t).
\end{aligned}
\]
Thus the summand of \eqref{eq:root-series-residue-decomposition}
associated with each root of \(\bar\Phi\) in \(\mathbb F_q\) equals
\(t(1-t)\) times the corresponding summand of
\eqref{eq:branch-sum-definition}.  Summing over these roots proves
\eqref{eq:root-tower-branch-identity}.
\end{proof}

For \(\epsilon\in\{0,1\}\), \(a,b\ge\epsilon+1\), and
\(\bar z\in\mathbb F_q\), put
\[
\mathfrak C_{\Phi,\bar z}^{[\epsilon],\Delta}(a,b)
:=\{(X,Y,Z)\in\mathfrak C_\Phi^{[\epsilon]}(a,b):
X\equiv Y\equiv\bar z\pmod q\}.
\]
We omit the superscript \([0]\).

\begin{lemma}\label{lem:diagonal-hensel-rescaling}
Let \(\epsilon\in\{0,1\}\) and \(a,b\ge\epsilon+1\).  If
\(\mathfrak C_{\Phi,\bar z}^{[\epsilon],\Delta}(a,b)\) is nonempty, then
\(\bar z\in\mathcal R_0(\Phi)\).  Fix a lift \(z\in\Z_q\). 
We have a \(q\)-to-one map
\[
\begin{aligned}
\mathfrak C_{\Phi,\bar z}^{[\epsilon],\Delta}(a,b)
&\longrightarrow
\mathfrak C_{\Phi_z^+}^{[\epsilon]}(a-1,b-1)\\
(X, Y, Z)&\mapsto (X_1,  Y_1, Z_1)
\end{aligned}
\]
with \(X=z+qX_1, Y=z+qY_1, Z_1=Z\bmod{q^{\min(a,b)-1)}}\).
Consequently,
\begin{equation}\label{eq:diagonal-hensel-rescaling}
\#\mathfrak C_{\Phi,\bar z}^{[\epsilon],\Delta}(a,b)
=q\,\#\mathfrak C_{\Phi_z^+}^{[\epsilon]}(a-1,b-1).
\end{equation}
\end{lemma}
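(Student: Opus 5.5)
The plan is to work directly from the defining congruences \eqref{eq:cluster-divided-difference}--\eqref{eq:cluster-linear} of Definition~\ref{def:cluster-set} and transport them through the substitution \(X=z+qX_1\), \(Y=z+qY_1\), using the two polynomial identities
\[
\Phi(z+qT)=q^2\Phi_z^+(T),\qquad
D_\Phi(z+qX_1,z+qY_1)=q\,D_{\Phi_z^+}(X_1,Y_1).
\]
The second identity follows from the first by dividing \(\Phi(z+qY_1)-\Phi(z+qX_1)\) by \(q(Y_1-X_1)\); it is first proved as a polynomial identity and then specialized. Throughout, \(m=\min(a,b)\) and \(M=\max(a,b)\). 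The shifted pair \((a-1,b-1)\) has minimum \(m-1\) and maximum \(M-1\), with the same ordering, so \(P_{\Phi_z^+,a-1,b-1}\) is \(\Phi_z^+\) evaluated at the same coordinate (\(Y_1\) or \(X_1\)) that \(P_{\Phi,a,b}\) uses. Hence \(P_{\Phi,a,b}(X,Y)=q^2P_{\Phi_z^+,a-1,b-1}(X_1,Y_1)\).

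\emph{Step 1: nonemptiness forces \(\bar z\in\mathcal R_0(\Phi)\).} Take \((X,Y,Z)\) in the set. Since \(m\ge\epsilon+1\), \eqref{eq:cluster-divided-difference} gives \(D_\Phi(X,Y)\equiv0\pmod q\). Because \(X\equiv Y\equiv z\), this reads \(\Phi'(z)\equiv0\pmod q\). Next, \(X-Y\equiv0\pmod q\), so the left side of \eqref{eq:cluster-linear} is divisible by \(q\), and \(m\ge1\). Hence \(P/q^{M-\epsilon}\equiv0\pmod q\), i.e. \(P\equiv0\pmod{q^{M-\epsilon+1}}\), and \(M-\epsilon+1\ge2\). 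Expanding \(\Phi(z+qT)\equiv\Phi(z)+q\Phi'(z)T\equiv\Phi(z)\pmod{q^2}\) then gives \(\Phi(z)\equiv0\pmod{q^2}\), so \(\bar z\in\mathcal R_0(\Phi)\). Thus \(\Phi_z^+\) is integral.

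\emph{Step 2: translate the three congruences.}
\begin{itemize}
\item Condition \eqref{eq:cluster-divided-difference} becomes \(D_{\Phi_z^+}(X_1,Y_1)\equiv0\pmod{q^{(m-1)-\epsilon}}\).
\item Condition \eqref{eq:cluster-polynomial} becomes \(P^+\equiv0\pmod{q^{(M-1)-\epsilon-1}}\), where \(P^+:=P_{\Phi_z^+,a-1,b-1}(X_1,Y_1)\).
\item Condition \eqref{eq:cluster-linear} becomes \(q(X_1-Y_1)Z\equiv P^+/q^{M-\epsilon-2}\pmod{q^m}\).
\end{itemize}
The condition from \eqref{eq:cluster-polynomial} is one power short of what \(\mathfrak C^{[\epsilon]}_{\Phi_z^+}(a-1,b-1)\) needs. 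The linear congruence compensates: its left side is divisible by \(q\), so it forces \(P^+\equiv0\pmod{q^{(M-1)-\epsilon}}\). Given this, dividing by \(q\) shows it is equivalent to
\[
(X_1-Y_1)Z\equiv P^+/q^{(M-1)-\epsilon}\pmod{q^{m-1}}.
\]
This is exactly \eqref{eq:cluster-linear} for the shifted family, and it depends only on \(Z\bmod q^{m-1}\). Conversely, any triple in \(\mathfrak C^{[\epsilon]}_{\Phi_z^+}(a-1,b-1)\) together with any lift of \(Z_1\) to \(q^m\) satisfies the original three conditions, by running the equivalences backwards.

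\emph{Step 3: counting.} Residues \(X\bmod q^a\) with \(X\equiv z\pmod q\) correspond bijectively to \(X_1\bmod q^{a-1}\), and similarly for \(Y\). The map \(Z\mapsto Z\bmod q^{m-1}\) from \(\Z_q/q^m\) is \(q\)-to-one, and Step~2 shows that every fiber lies entirely inside the set. This gives the \(q\)-to-one map and \eqref{eq:diagonal-hensel-rescaling}.

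The main obstacle is bookkeeping. One must check that a change of the lift \(z\), or of the lifts of \(X_1,Y_1\), only translates \(Z\), as in Definition~\ref{def:cluster-set}, so that the map is well defined up to a permutation of fibers. One must also check the degenerate edge cases \(m-1=\epsilon\), where some congruences become vacuous; the equivalences above remain valid there.
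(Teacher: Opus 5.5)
Your proposal is correct and follows the paper's proof essentially step for step. It uses the same rescaling identities $D_\Phi(X,Y)=qD_{\Phi_z^+}(X_1,Y_1)$ and $P=q^2P_1$, the same use of the linear congruence (with $q\mid X-Y$) to gain the extra power of $q$ that gives $\Phi(z)\equiv0\pmod{q^2}$ and $P_1\equiv 0\pmod{q^{M-1-\epsilon}}$, and the same count of $q$ lifts of $Z_1$. Your explicit remark that the polynomial congruence alone is one power of $q$ short, with the linear congruence supplying it, spells out what the paper compresses into ``after division by $q$'' (via its earlier observation $Q\in q\Z_q$).
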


\begin{proof}
Put \(m=\min(a,b)\), \(M=\max(a,b)\), and
\[
P=P_{\Phi,a,b}(X,Y),\qquad
Q=\frac{P}{q^{M-\epsilon}}\in\Z_q.
\]
Since \(m-\epsilon\ge1\), congruence
\eqref{eq:cluster-divided-difference} gives
\(\Phi'(z)=D_\Phi(z,z)\equiv0\pmod q\).  Moreover,
\(q\mid X-Y\), so \eqref{eq:cluster-linear} gives \(Q\in q\Z_q\).
If \(W=Y\) for \(a\le b\) and \(W=X\) for \(a>b\), then
\[
\Phi(W)=P\in q^{M-\epsilon+1}\Z_q\subseteq q^2\Z_q.
\]
As \(W\equiv z\pmod q\), Taylor expansion modulo \(q^2\) now gives
\(\Phi(z)\equiv0\pmod {q^2}\).  Hence
\(\bar z\in\mathcal R_0(\Phi)\).

Let
\[
P_1=P_{\Phi_z^+,a-1,b-1}(X_1,Y_1).
\]
Then
\[
D_\Phi(X,Y)=qD_{\Phi_z^+}(X_1,Y_1),
\qquad
P=q^2P_1.
\]
If \(Z_1\) is the reduction of \(Z\) modulo \(q^{m-1}\), the three
conditions in Definition~\ref{def:cluster-set}, after division by \(q\),
become
\[
\begin{gathered}
D_{\Phi_z^+}(X_1,Y_1)\equiv0\pmod {q^{m-1-\epsilon}},\qquad
P_1\equiv0\pmod {q^{M-1-\epsilon}},\\
(X_1-Y_1)Z_1\equiv
\frac{P_1}{q^{M-1-\epsilon}}\pmod {q^{m-1}}.
\end{gathered}
\]
These are precisely the conditions defining
\(\mathfrak C_{\Phi_z^+}^{[\epsilon]}(a-1,b-1)\).

Conversely, \(X_1\) and \(Y_1\) determine \(X\) and \(Y\) uniquely, while
\(Z_1\bmod q^{m-1}\) has exactly \(q\) lifts modulo \(q^m\).
The two identities above show that every lift satisfies the three original
conditions; the last one is unaffected because
\(q(X_1-Y_1)(Z-Z_1)\in q^m\Z_q\).  Thus every fiber has cardinality \(q\).
\end{proof}

Define the nonrecursive part of
\(\mathcal H_\Phi(t)\) by
\begin{align*}
\mathcal H_\Phi^{\mathrm{nr}}(t)
&:=
\sum_{\substack{a,b\ge0\\a=0\text{ or }b=0}}
\#\mathfrak C_\Phi(a,b)t^{a+2b}+
\sum_{a,b\ge1}
\#\Bigl\{(X,Y,Z)\in\mathfrak C_\Phi(a,b):
X\not\equiv Y\pmod q\Bigr\}t^{a+2b}.
\end{align*}

Define the nonrecursive part of \(\mathcal C_\Phi(t)\) by
\begin{align*}
\mathcal C_\Phi^{\mathrm{nr}}(t)
&:=
\sum_{\substack{a,b\ge1\\a=1\text{ or }b=1}}
\#\mathfrak C_\Phi^{[1]}(a,b)t^{a+2b}+
\sum_{a,b\ge2}
\#\Bigl\{(X,Y,Z)\in\mathfrak C_\Phi^{[1]}(a,b):
X\not\equiv Y\pmod q\Bigr\}t^{a+2b}.
\end{align*}

\begin{lemma}
\label{lem:full-hnf-decomposition}
For every cubic \(\Phi\in\Z_q[T]\) of nonzero discriminant, one has
\begin{align}
\mathcal H_\Phi^{\mathrm{nr}}(t)
&=R_\Phi(t)+R_\Phi(t^2)-1
+\sum_{\substack{\bar x,\bar y\in\mathbb F_q\\ \bar x\ne\bar y}}
R_{\Phi,\bar x}(t)R_{\Phi,\bar y}(t^2)
\label{eq:full-hnf-nonrecursive}\\
\mathcal H_\Phi(t)
&=\mathcal H_\Phi^{\mathrm{nr}}(t)
+qt^3\sum_{\bar z\in\mathcal R_0(\Phi)}
\mathcal H_{\Phi_z^+}(t).
\label{eq:full-hnf-recursion}
\end{align}
Moreover,
\begin{equation}\label{eq:neighbor-cluster-recursion}
\mathcal C_\Phi(t)=
\mathcal C_\Phi^{\mathrm{nr}}(t)
+qt^3\sum_{\bar z\in\mathcal R_0(\Phi)}
\mathcal C_{\Phi_z^+}(t).
\end{equation}
\end{lemma}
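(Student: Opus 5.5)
The plan is to split the sum defining \(\mathcal H_\Phi(t)\) according to the residues of \(X\) and \(Y\) modulo \(q\), and to evaluate each piece with the tools already available. For the boundary terms, if \(b=0\) then \(m=0\), \(Y\) and \(Z\) are trivial, \(P=\Phi(X)\), and the conditions reduce to \(\Phi(X)\equiv0\pmod{q^a}\). Hence \(\#\mathfrak C_\Phi(a,0)=N_\Phi(a)\). Symmetrically, for \(a=0\) (using \(a\le b\), so \(P=\Phi(Y)\)) we get \(\#\mathfrak C_\Phi(0,b)=N_\Phi(b)\) with weight \(t^{2b}\). The \((0,0)\) term is counted twice, which gives \(R_\Phi(t)+R_\Phi(t^2)-1\).

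For \(a,b\ge1\) with \(\bar x:=X\bmod q\ne \bar y:=Y\bmod q\), the difference \(X-Y\) is a unit, so \eqref{eq:cluster-linear} determines \(Z\) modulo \(q^m\) uniquely. It remains to show that the pair of conditions \eqref{eq:cluster-divided-difference}–\eqref{eq:cluster-polynomial} is equivalent to \(\Phi(X)\equiv0\pmod{q^a}\) and \(\Phi(Y)\equiv0\pmod{q^b}\). This follows from \(\Phi(Y)-\Phi(X)=(Y-X)D_\Phi(X,Y)\) with \(Y-X\) a unit. Take \(a\le b\) (so \(m=a\), \(M=b\)). Then \(\Phi(Y)\equiv0\pmod{q^b}\) together with \(D_\Phi\equiv0\pmod{q^a}\) gives \(\Phi(X)\equiv0\pmod{q^a}\). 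Conversely, \(\Phi(X)\equiv0\pmod{q^a}\) and \(\Phi(Y)\equiv0\pmod{q^b}\) give \(D_\Phi\equiv0\pmod{q^a}\). The case \(a>b\) is symmetric. The count for fixed residues \(\bar x,\bar y\) is therefore \(N_{\Phi,\bar x}(a)N_{\Phi,\bar y}(b)\), and summing with weight \(t^{a+2b}\) yields the product \(R_{\Phi,\bar x}(t)R_{\Phi,\bar y}(t^2)\). This proves \eqref{eq:full-hnf-nonrecursive}.

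For \eqref{eq:full-hnf-recursion}, the remaining terms have \(a,b\ge1\) and \(X\equiv Y\equiv\bar z\pmod q\), i.e. they lie in \(\mathfrak C^{\Delta}_{\Phi,\bar z}(a,b)\). Lemma~\ref{lem:diagonal-hensel-rescaling} with \(\epsilon=0\) requires \(a,b\ge1\), which holds. It shows that these sets are empty unless \(\bar z\in\mathcal R_0(\Phi)\), and that otherwise they have cardinality \(q\#\mathfrak C_{\Phi_z^+}(a-1,b-1)\). Reindexing with \(t^{a+2b}=t^3t^{(a-1)+2(b-1)}\) gives \(qt^3\mathcal H_{\Phi_z^+}(t)\).

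The identity \eqref{eq:neighbor-cluster-recursion} is proved the same way with \(\epsilon=1\). The terms with \(a,b\ge2\) and \(X\equiv Y\) fall under the same lemma, now with \(\epsilon=1\) and \(a,b\ge2\). The image index range \(a-1,b-1\ge1\) is exactly the range summed in \(\mathcal C_{\Phi_z^+}\).

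The main point needing care is the off-diagonal case with \(\epsilon=0\): checking that the equivalence holds with the exact moduli \(q^m\) and \(q^M\), and that the lift-independence remark in Definition~\ref{def:cluster-set} makes the count well defined. Beyond that, the argument is bookkeeping.
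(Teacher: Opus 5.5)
Your proposal is correct and follows the paper's proof essentially step for step. The boundary terms give $R_\Phi(t)+R_\Phi(t^2)-1$, and the off-diagonal terms factor as $N_{\Phi,\bar x}(a)N_{\Phi,\bar y}(b)$ via $\Phi(Y)-\Phi(X)=(Y-X)D_\Phi(X,Y)$ with $X-Y$ a unit. The diagonal terms are handled by Lemma~\ref{lem:diagonal-hensel-rescaling} with $\epsilon=0$ and $\epsilon=1$, followed by the reindexing $t^{a+2b}=t^3t^{(a-1)+2(b-1)}$.
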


\begin{proof}
Definition~\ref{def:cluster-set} gives
\[
\#\mathfrak C_\Phi(a,0)=N_\Phi(a),
\qquad
\#\mathfrak C_\Phi(0,b)=N_\Phi(b).
\]
Thus the two boundary components contribute
\(R_\Phi(t)+R_\Phi(t^2)-1\), where the subtraction removes the second
copy of \((a,b)=(0,0)\).

Fix \(a,b\ge1\), and put \(m=\min(a,b)\) and \(M=\max(a,b)\).
If \(X\not\equiv Y\pmod q\), then \(X-Y\) is a unit.  The first two
congruences \eqref{eq:cluster-divided-difference} and
\eqref{eq:cluster-polynomial} specialize to
\[
\begin{array}{ll}
a\le b:&
D_\Phi(X,Y)\equiv0\pmod {q^a},\quad
\Phi(Y)\equiv0\pmod {q^b},\\
a>b:&
D_\Phi(X,Y)\equiv0\pmod {q^b},\quad
\Phi(X)\equiv0\pmod {q^a}.
\end{array}
\]
Because \(Y-X\) is a unit, the identity
\[
\Phi(Y)-\Phi(X)=(Y-X)D_\Phi(X,Y),
\]
shows that, in both cases \(a\le b\) and \(a>b\), the corresponding two
congruences are equivalent to
\[
\Phi(X)\equiv0\pmod {q^a},
\qquad
\Phi(Y)\equiv0\pmod {q^b}.
\]
For each such pair \(X,Y\), congruence \eqref{eq:cluster-linear} determines
\(Z\in\Z_q/q^m\Z_q\) uniquely, since \(X-Y\) is a unit.  Thus, for
fixed distinct classes \(\bar x,\bar y\in\mathbb F_q\), the number of
triples is
\[
N_{\Phi,\bar x}(a)N_{\Phi,\bar y}(b).
\]
Summing over \(a,b\ge1\) and \(\bar x\ne\bar y\) proves
\eqref{eq:full-hnf-nonrecursive}.

For \(\epsilon=0\), the terms of
\(\mathcal H_\Phi-\mathcal H_\Phi^{\mathrm{nr}}\), and for
\(\epsilon=1\), the terms of
\(\mathcal C_\Phi-\mathcal C_\Phi^{\mathrm{nr}}\), are exactly those
with \(a,b\ge\epsilon+1\) and
\[
X\equiv Y\equiv\bar z\pmod q
\]
for a unique \(\bar z\in\mathbb F_q\).
Lemma~\ref{lem:diagonal-hensel-rescaling} shows that only
\(\bar z\in\mathcal R_0(\Phi)\) occur and gives
\[
\#\mathfrak C_{\Phi,\bar z}^{[\epsilon],\Delta}(a,b)
=q\,\#\mathfrak C_{\Phi_z^+}^{[\epsilon]}(a-1,b-1).
\]
Since \(t^{a+2b}=t^3t^{(a-1)+2(b-1)}\), summing this identity for
\(\epsilon=0\) and \(\epsilon=1\) proves
\eqref{eq:full-hnf-recursion} and
\eqref{eq:neighbor-cluster-recursion}, respectively.
\end{proof}

Proposition~\ref{prop:peeled-hnf-series} and
Proposition~\ref{prop:hnf-root-count-parametrization} give
\begin{equation}\label{eq:dilated-hnf-series-compact}
J_A(s)=
\frac{G_\Phi^\partial(t)+qt^3(E_\Phi(t)+\mathcal C_\Phi(t))}
{1-t^3}.
\end{equation}

\begin{proposition}
\label{prop:hnf-neighbor-calculation}
For every cubic \(\Phi\in\Z_q[T]\) of nonzero discriminant,
\begin{equation}\label{eq:hnf-neighbor-calculation}
G_\Phi^\partial(t)+qt^3(E_\Phi(t)+\mathcal C_\Phi(t))
= (1-t^3)
\left(
\frac1{1-t}+qt^2C_\square(t)
+q^2t^4\mathscr B_\Phi(t)
\right)
+q^3t^6\mathcal H_\Phi(t).
\end{equation}
\end{proposition}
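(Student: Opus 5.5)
The plan is to prove \eqref{eq:hnf-neighbor-calculation} by induction on \(v_q(\Delta(\Phi))\). First I would rewrite both sides so that all dependence on the liftable roots \(\mathcal R_0(\Phi)\) is isolated through the recursions already available. On the left, \eqref{eq:neighbor-cluster-recursion} splits \(\mathcal C_\Phi\) into \(\mathcal C_\Phi^{\mathrm{nr}}\) and \(qt^3\sum_{\bar z\in\mathcal R_0(\Phi)}\mathcal C_{\Phi_z^+}\). On the right, \eqref{eq:full-hnf-recursion} splits \(\mathcal H_\Phi\) in the same way. The definition \eqref{eq:branch-sum-definition}, combined with \eqref{eq:directed-series-recursion} in the form \(\mathscr D_{\Phi_z^+}=C_\square+qt^2\mathscr B_{\Phi_z^+}\), splits \(\mathscr B_\Phi\) into a terminal part (simple and obstructed roots) and a recursive part.

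For each \(\bar z\in\mathcal R_0(\Phi)\), the recursive contributions are
\[
q^2t^6\mathcal C_{\Phi_z^+}
\quad\text{on the left, and}\quad
(1-t^3)q^2t^4\bigl(C_\square+qt^2\mathscr B_{\Phi_z^+}\bigr)+q^4t^9\mathcal H_{\Phi_z^+}
\quad\text{on the right.}
\]
The induction hypothesis applies to \(\Phi_z^+\) because of \eqref{eq:directed-discriminant-drop}. Multiplying that instance of \eqref{eq:hnf-neighbor-calculation} by \(qt^3\), I would replace \(q^2t^6\mathcal C_{\Phi_z^+}\) by the right side for \(\Phi_z^+\), minus \(qt^3G^\partial_{\Phi_z^+}+q^2t^6E_{\Phi_z^+}\). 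The terms in \(\mathscr B_{\Phi_z^+}\) and \(\mathcal H_{\Phi_z^+}\) then cancel identically. What remains is an identity involving only the nonrecursive series
\[
G^\partial_\Phi,\ E_\Phi,\ \mathcal C^{\mathrm{nr}}_\Phi,\ \mathcal H^{\mathrm{nr}}_\Phi,
\]
the terminal root counts, and, for each liftable root, the explicit series \(G^\partial_{\Phi_z^+}\), \(E_{\Phi_z^+}\) and \(C_\square\). The base case (no liftable roots) is exactly this reduced identity with empty recursive sum.

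Next I would express everything through the root series \(R_\Phi\) and \(R_{\Phi,\bar z}\) of \eqref{def: R-Phi}. The series \(G^\partial_\Phi\) and \(E_\Phi\) are already written in terms of \(N_\Phi\) in \eqref{eq:auxiliary-hnf-series}. The series \(\mathcal H^{\mathrm{nr}}_\Phi\) is given by \eqref{eq:full-hnf-nonrecursive}. Lemma~\ref{lem:root-tower-branch-identity} gives \(t(1-t)\mathscr B_\Phi=R_\Phi-1\). For each liftable root, \eqref{eq:liftable-root-tower} converts \(N_{\Phi_z^+}\) back into \(R_{\Phi,\bar z}\), via \(R_{\Phi,\bar z}=t+qt^2R_{\Phi_z^+}\). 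The one new computation is \(\mathcal C^{\mathrm{nr}}_\Phi\). I would handle its off-diagonal part (\(a,b\ge2\), \(X\not\equiv Y\)) exactly as in the proof of Lemma~\ref{lem:full-hnf-decomposition}: when \(X-Y\) is a unit, the shifted congruences become \(\Phi(X)\equiv0\pmod{q^{a-1}}\) and \(\Phi(Y)\equiv0\pmod{q^{b-1}}\), and \(Z\) is then unique modulo \(q^m\). This gives \(q^2\) times a product \(N_{\Phi,\bar x}(a-1)N_{\Phi,\bar y}(b-1)\), with the factor \(q^2\) coming from the free top digits of \(X,Y\). Hence this part is \(q^2t^3\sum_{\bar x\ne\bar y}R_{\Phi,\bar x}(t)R_{\Phi,\bar y}(t^2)\), which matches the cross term \(q^3t^6\sum_{\bar x\ne\bar y}R_{\Phi,\bar x}(t)R_{\Phi,\bar y}(t^2)\) of \(q^3t^6\mathcal H^{\mathrm{nr}}_\Phi\) after the prefactor \(qt^3\).

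The main obstacle is the boundary strip of \(\mathcal C^{\mathrm{nr}}_\Phi\), where \(a=1\) or \(b=1\). Here \(m-\epsilon=0\), so the divided-difference condition disappears, the diagonal \(X\equiv Y\) is not excluded, and \(Z\) ranges over \(\Z_q/q\). I would count these triples directly. For \(a=1\le b\), \(X\) is a free residue mod \(q\), \(Y\) satisfies \(\Phi(Y)\equiv0\pmod{q^{b-1}}\), and the linear congruence \((X-Y)Z\equiv\Phi(Y)/q^{b-1}\pmod q\) has one solution \(Z\) if \(X\not\equiv Y\), and \(q\) or \(0\) solutions if \(X\equiv Y\), according as \(\Phi(Y)\equiv0\pmod{q^b}\) or not. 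The case \(b=1<a\) is symmetric. Summing gives closed forms such as \((q-1)N_\Phi(b-1)+qN_\Phi(b)\), up to boundary corrections at \(b=1\), which I will check against small cases. I would then multiply the reduced identity through by \((1-t)\) and collect terms to verify it as a formal identity in \(t\), \(R_\Phi(t)\), \(R_\Phi(t^2)\) and the \(R_{\Phi,\bar z}\). The step I expect to be delicate is the bookkeeping of the low-degree terms: the exponents \(\min(j-1,2)\) and \(\max(j-3,0)\) in \(G^\partial\), together with the \(q^2t^4C_\square\) term, must exactly absorb the boundary discrepancies at small \(a,b\). I would verify this degree by degree in \(t\) before assembling the general identity.
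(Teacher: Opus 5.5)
Your architecture is the same as the paper's. You induct on \(v_q(\Delta(\Phi))\) through \eqref{eq:full-hnf-recursion} and \eqref{eq:neighbor-cluster-recursion}, and rewrite everything in root series, with \(\mathcal C_\Phi^{\mathrm{nr}}\) as the one new count (the paper's Lemma~\ref{lem:neighbor-nonrecursive-cluster}). Substituting the induction hypothesis for \(\Phi_z^+\) is equivalent to the paper's defect recursion \(\mathfrak E_\Phi=qt^3\sum_{\bar z\in\mathcal R_0(\Phi)}\mathfrak E_{\Phi_z^+}\).

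However, the reduction step as you state it is false: the \(\mathscr B_{\Phi_z^+}\)-terms do not cancel. Put \(\Psi=\Phi_z^+\).
\begin{itemize}
\item The recursive part of \((1-t^3)q^2t^4\mathscr B_\Phi\) contributes \((1-t^3)q^2t^4\cdot qt^2\mathscr B_\Psi=(1-t^3)q^3t^6\mathscr B_\Psi\).
\item By contrast, \(qt^3\) times the instance of \eqref{eq:hnf-neighbor-calculation} for \(\Psi\) contains \((1-t^3)q^3t^7\mathscr B_\Psi\).
\end{itemize}
Only the \(\mathcal H_\Psi\)-terms cancel, since both are \(q^4t^9\mathcal H_\Psi\). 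For each liftable root, the net residual
\[
(1-t^3)q^3t^6(1-t)\,\mathscr B_\Psi(t)=q^3t^5(1-t^3)\bigl(R_\Psi(t)-1\bigr)
\]
survives in the difference of the two sides. The ``reduced identity'' you would then go on to verify is false without it.

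The missing input is Lemma~\ref{lem:root-tower-branch-identity} applied to \(\Psi\) itself; you invoke it only for \(\Phi\). It gives the displayed equality. After that, \eqref{eq:liftable-root-tower} rewrites \(R_\Psi\) through \(R_{\Phi,\bar z}\), and your plan closes. This is exactly the role of step~3 in the paper's proof, where the factor \(R_{\Phi_z^+}(t)-1-t(1-t)\mathscr B_{\Phi_z^+}(t)\) is killed by that lemma.

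Two smaller corrections.

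First, in the boundary strip with \((a,b)=(1,k+1)\), there are \(qN_\Phi(k)\) classes \(Y\bmod q^{k+1}\) with \(\Phi(Y)\equiv0\pmod{q^k}\), not \(N_\Phi(k)\). Hence
\[
\#\mathfrak C_\Phi^{[1]}(1,k+1)=q\bigl((q-1)N_\Phi(k)+N_\Phi(k+1)\bigr)\qquad(k\ge0),
\]
with no separate correction at \(k=0\). Your tentative \((q-1)N_\Phi(b-1)+qN_\Phi(b)\) is missing a factor \(q\) in its first term, and the verification would fail with it.

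Second, organize the final check root class by root class rather than as an identity in \(R_\Phi(t),R_\Phi(t^2)\) as free variables.
\begin{itemize}
\item Once the root-independent constants and the cross terms \(\sum_{\bar x\ne\bar y}R_{\Phi,\bar x}(t)R_{\Phi,\bar y}(t^2)\) cancel, split the remainder over the roots \(\bar z\) of \(\bar\Phi\). Use \(N_{\Phi,\bar z}(1)=1\) to distribute the constant \(-N_\Phi(1)\).
\item A simple-root summand vanishes only after inserting \(R_{\Phi,\bar z}(u)=u/(1-u)\) against the branch term \((1-t)^{-2}\) from \eqref{eq:branch-sum-definition}.
\item An obstructed-root summand vanishes only after inserting \(R_{\Phi,\bar z}(u)=u\) against \((1-t)^{-1}\).
\end{itemize}
The per-root expression is not identically zero in \(R\), so a degree-by-degree check can only serve as a sanity test, not as the proof.
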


To prove Proposition~\ref{prop:hnf-neighbor-calculation}, we first
establish the shifted analogue of
\eqref{eq:full-hnf-nonrecursive}.
Set
\[
S_\Phi(t):=
\sum_{\substack{\bar x,\bar y\in\mathbb F_q\\\bar x\ne\bar y}}
R_{\Phi,\bar x}(t)R_{\Phi,\bar y}(t^2).
\]

\begin{lemma}\label{lem:neighbor-nonrecursive-cluster}
For every cubic \(\Phi\in\Z_q[T]\) of nonzero discriminant,
\begin{equation}\label{eq:neighbor-nonrecursive-cluster}
\begin{aligned}
\mathcal C_\Phi^{\mathrm{nr}}(t)
&=qt^3\left(
(q-1)\bigl(R_\Phi(t)+R_\Phi(t^2)-1\bigr)
+\frac{R_\Phi(t)-1}{t}
+\frac{R_\Phi(t^2)-1}{t^2}-N_\Phi(1)
\right)\\
&\quad+q^2t^3
\sum_{\substack{\bar x,\bar y\in\mathbb F_q\\\bar x\ne\bar y}}
R_{\Phi,\bar x}(t)R_{\Phi,\bar y}(t^2).
\end{aligned}
\end{equation}
\end{lemma}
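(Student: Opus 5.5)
The plan is to compute $\mathcal C_\Phi^{\mathrm{nr}}(t)$ directly from Definition~\ref{def:cluster-set} with $\epsilon=1$, splitting it into its two defining pieces: the \emph{edge} part where $m=\min(a,b)=1$, and the \emph{off-diagonal interior} part where $a,b\ge2$ and $X\not\equiv Y\pmod q$. The argument mirrors the proof of \eqref{eq:full-hnf-nonrecursive} in Lemma~\ref{lem:full-hnf-decomposition}. The one new feature is that $\epsilon=1$ lowers the moduli in \eqref{eq:cluster-divided-difference}--\eqref{eq:cluster-polynomial} by one while the variables keep their full ranges. This produces an extra lifting factor $q$ in each variable, and these factors account for the prefactors $q$ and $q^2$.

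\textbf{Interior part.} Take $a,b\ge2$ with $X\not\equiv Y\pmod q$. Then $Y-X$ is a unit, so the identity $\Phi(Y)-\Phi(X)=(Y-X)D_\Phi(X,Y)$ shows, exactly as before, that the pair of conditions
\[
D_\Phi\equiv0\pmod{q^{m-1}},\qquad P\equiv0\pmod{q^{M-1}}
\]
is equivalent to
\[
\Phi(X)\equiv0\pmod{q^{a-1}},\qquad \Phi(Y)\equiv0\pmod{q^{b-1}}.
\]
Once $X,Y$ are fixed, $Z$ modulo $q^m$ is uniquely determined by \eqref{eq:cluster-linear}. The variable $X$ runs modulo $q^a$, so each root modulo $q^{a-1}$ in the class $\bar x$ has $q$ lifts, and likewise for $Y$. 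For distinct classes $\bar x\ne\bar y$ this gives
\[
q^2N_{\Phi,\bar x}(a-1)\,N_{\Phi,\bar y}(b-1)
\]
triples. Substituting $a'=a-1\ge1$ and $b'=b-1\ge1$, we have $t^{a+2b}=t^3t^{a'+2b'}$. Summing over $a',b'$ and over $\bar x\ne\bar y$ yields the term $q^2t^3S_\Phi(t)$.

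\textbf{Edge part, case $a=1\le b$.} Here $m=1$, so \eqref{eq:cluster-divided-difference} is vacuous. The variable $X$ runs modulo $q$, $Y$ modulo $q^b$, and $Z$ modulo $q$. The remaining conditions are
\[
\Phi(Y)\equiv0\pmod{q^{b-1}},\qquad (X-Y)Z\equiv\Phi(Y)/q^{b-1}\pmod q.
\]
The number of admissible $Y$ for the first condition is $qN_\Phi(b-1)$. I then split according to $X$.
\begin{itemize}[leftmargin=2em]
\item If $X\not\equiv Y\pmod q$, there are $q-1$ choices of $X$, and $Z$ is unique. This contributes $(q-1)qN_\Phi(b-1)$.
\item If $X\equiv Y\pmod q$, the congruence forces $\Phi(Y)\equiv0\pmod{q^b}$, and then $Z$ is free. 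This contributes $qN_\Phi(b)$.
\end{itemize}
Hence $\#\mathfrak C^{[1]}_\Phi(1,b)=q\bigl((q-1)N_\Phi(b-1)+N_\Phi(b)\bigr)$.

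\textbf{Edge part, case $b=1<a$.} The symmetric computation, now using $P=\Phi(X)$, gives $\#\mathfrak C^{[1]}_\Phi(a,1)=q\bigl((q-1)N_\Phi(a-1)+N_\Phi(a)\bigr)$ for $a\ge2$.

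\textbf{Assembling the edge sums.} For the first family,
\[
\sum_{b\ge1}q\bigl((q-1)N_\Phi(b-1)+N_\Phi(b)\bigr)t^{1+2b}
= qt^3\Bigl((q-1)R_\Phi(t^2)+\tfrac{R_\Phi(t^2)-1}{t^2}\Bigr).
\]
For the second family,
\[
\sum_{a\ge2}q\bigl((q-1)N_\Phi(a-1)+N_\Phi(a)\bigr)t^{a+2}
= qt^3\Bigl((q-1)\bigl(R_\Phi(t)-1\bigr)+\tfrac{R_\Phi(t)-1}{t}-N_\Phi(1)\Bigr).
\]
The correction $-N_\Phi(1)$ appears because the $a=1$ term is already counted in the first family. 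Adding these two expressions gives exactly the first line of \eqref{eq:neighbor-nonrecursive-cluster}.

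\textbf{Main obstacle.} I expect the only delicate point to be the bookkeeping in the edge case: keeping track of the lifting factors $q$, separating the two subcases $X\equiv Y$ and $X\not\equiv Y$ in the linear congruence, and handling the overlap at $(a,b)=(1,1)$. I would check the whole formula against the coefficients of $t^3$ and $t^4$, where the convention $N_\Phi(0)=1$ enters.
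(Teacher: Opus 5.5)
Your proposal is correct and follows the paper's proof essentially line for line. It uses the same split into the edge families $(1,b)$, $b\ge1$, and $(a,1)$, $a\ge2$, plus the off-diagonal interior $a,b\ge2$, with the same counts $q\bigl((q-1)N_\Phi(k)+N_\Phi(k+1)\bigr)$ and $q^2N_{\Phi,\bar x}(a-1)N_{\Phi,\bar y}(b-1)$ and the same reindexing. The only cosmetic difference is that the paper gets the $(a,1)$ count from the involution $(X,Y,Z)\mapsto(Y,X,-Z)$ instead of redoing the computation with $P=\Phi(X)$.
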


\begin{proof}
The terms defining \(\mathcal C_\Phi^{\mathrm{nr}}(t)\) form the
disjoint union of
\[
(a,b)=(1,k+1)\quad(k\ge0),\qquad
(a,b)=(k+1,1)\quad(k\ge1),
\]
and the terms with \(a,b\ge2\) and \(X\not\equiv Y\pmod q\).

For \((a,b)=(1,k+1)\) and \(\epsilon=1\), congruence
\eqref{eq:cluster-divided-difference} is vacuous, and the other two
congruences are
\[
\Phi(Y)\equiv0\pmod {q^k},
\qquad
(X-Y)Z\equiv\frac{\Phi(Y)}{q^k}\pmod q;
\]
there are \(qN_\Phi(k)\) possible classes \(Y\bmod q^{k+1}\).
The classes with \(X\not\equiv Y\pmod q\) contribute
\(q(q-1)N_\Phi(k)\), since \(X-Y\) determines \(Z\).  If
\(X\equiv Y\pmod q\), then \eqref{eq:cluster-linear} is equivalent to
\(\Phi(Y)\equiv0\pmod {q^{k+1}}\), giving \(qN_\Phi(k+1)\) triples.
Therefore
\[
\#\mathfrak C_\Phi^{[1]}(1,k+1)
=q((q-1)N_\Phi(k)+N_\Phi(k+1)).
\]
The involution \((X,Y,Z)\mapsto(Y,X,-Z)\) gives the same count for
\(\mathfrak C_\Phi^{[1]}(k+1,1)\).

For the remaining terms, put \(i=a-1\) and \(j=b-1\).
If \(i\le j\), the congruences
\eqref{eq:cluster-divided-difference} and
\eqref{eq:cluster-polynomial} are
\[
D_\Phi(X,Y)\equiv0\pmod {q^i},
\qquad
\Phi(Y)\equiv0\pmod {q^j}.
\]
Since \(X-Y\) is a unit and
\(\Phi(Y)-\Phi(X)=(Y-X)D_\Phi(X,Y)\), these are equivalent to
\[
\Phi(X)\equiv0\pmod {q^i},
\qquad
\Phi(Y)\equiv0\pmod {q^j}.
\]
The case \(i>j\) is the same with \(X\) and \(Y\) interchanged.
Each root modulo \(q^i\), respectively \(q^j\), has \(q\) lifts modulo
\(q^{i+1}\), respectively \(q^{j+1}\).  Thus, for fixed
\(\bar x\ne\bar y\), there are
\(q^2N_{\Phi,\bar x}(i)N_{\Phi,\bar y}(j)\) choices for \(X,Y\), while
\eqref{eq:cluster-linear} determines \(Z\) uniquely.  Consequently,
\[
\mathcal C_\Phi^{\mathrm{nr}}(t)
=qt^3\left(
\sum_{k\ge0}((q-1)N_\Phi(k)+N_\Phi(k+1))t^{2k}
+\sum_{k\ge1}((q-1)N_\Phi(k)+N_\Phi(k+1))t^k
\right)+q^2t^3S_\Phi(t).
\]
Finally, by \eqref{def: R-Phi}
\[
\sum_{k\ge0}((q-1)N_\Phi(k)+N_\Phi(k+1))u^k
=(q-1)R_\Phi(u)+\frac{R_\Phi(u)-1}{u}.
\]
Applying this identity with \(u=t^2\) and \(u=t\), and subtracting
\((q-1)+N_\Phi(1)\) from the latter series, proves
\eqref{eq:neighbor-nonrecursive-cluster}.
\end{proof}

\begin{proof}[Proof of Proposition~\ref{prop:hnf-neighbor-calculation}]
\smallskip\noindent\emph{1. Expansion of the defect.}
From \eqref{eq:auxiliary-hnf-series},
\begin{align}
G_\Phi^\partial(t)
&=1+t+t^2+q(t^2+t^4)
+q^2t^3R_\Phi(t)+q^2t^6R_\Phi(t^2),
\label{eq:neighbor-G-root-series}\\
E_\Phi(t)
&=1+qtR_\Phi(t)+qt^2R_\Phi(t^2).
\label{eq:neighbor-E-root-series}
\end{align}
In \eqref{eq:neighbor-G-root-series}, the terms \(t+t^2\) and
\(q(t^2+t^4)\) arise from \(j=1\) and \(j=2\), respectively, while the
remaining terms are obtained by writing \(k=j-3\) in the sum over
\(j\ge3\).  Equation~\eqref{eq:neighbor-E-root-series} follows by letting
\(k=j-1\).
Let \(\mathfrak E_\Phi(t)\) be the difference between the two sides of
\eqref{eq:hnf-neighbor-calculation}:
\begin{align*}
\mathfrak E_\Phi(t)
&:=G_\Phi^\partial(t)+qt^3\bigl(E_\Phi(t)+\mathcal C_\Phi(t)\bigr)-(1-t^3)\left(
\frac1{1-t}+qt^2C_\square(t)+q^2t^4\mathscr B_\Phi(t)
\right)-q^3t^6\mathcal H_\Phi(t).
\end{align*}

For \(\bar z\in\mathbb F_q\) satisfying \(\bar\Phi(\bar z)=0\), let:
\[
b_{\bar z}(t):=
\begin{cases}
(1-t)^{-2},&\bar z\in\mathcal R_{\mathrm{simp}}(\Phi),\\
(1-t)^{-1},&\bar z\in\mathcal R_{\mathrm{obs}}(\Phi),\\
\mathscr D_{\Phi_z^+}(t),&\bar z\in\mathcal R_0(\Phi).
\end{cases}
\]
Then
\[
\begin{aligned}
R_\Phi(u)
&=1+\sum_{\substack{\bar z\in\mathbb F_q\\
\bar\Phi(\bar z)=0}}R_{\Phi,\bar z}(u),\\
\mathscr B_\Phi(t)
&=\sum_{\substack{\bar z\in\mathbb F_q\\
\bar\Phi(\bar z)=0}}b_{\bar z}(t).
\end{aligned}
\]
Equation \eqref{eq:full-hnf-nonrecursive} becomes
\[
\mathcal H_\Phi^{\mathrm{nr}}(t)
=1+\sum_{\substack{\bar z\in\mathbb F_q\\
\bar\Phi(\bar z)=0}}
\bigl(R_{\Phi,\bar z}(t)+R_{\Phi,\bar z}(t^2)\bigr)+S_\Phi(t).
\]

Since \(N_{\Phi,\bar z}(1)=1\), reindexing gives
\begin{equation}\label{eq:neighbor-root-class-sum}
\begin{aligned}
\sum_{k\ge1}\Bigl((q-1)N_{\Phi,\bar z}(k)
+N_{\Phi,\bar z}(k+1)\Bigr)u^k
&=(q-1)R_{\Phi,\bar z}(u)
+u^{-1}\bigl(R_{\Phi,\bar z}(u)-u\bigr)\\
&=(q-1)R_{\Phi,\bar z}(u)
+u^{-1}R_{\Phi,\bar z}(u)-1.
\end{aligned}
\end{equation}
Applied with \(u=t\) and \(u=t^2\), this identity accounts for the
individual root-class terms in \eqref{eq:neighbor-nonrecursive-cluster}.

We now derive the expansion of \(\mathfrak E_\Phi(t)\) term by term.
Equations \eqref{eq:neighbor-G-root-series} and
\eqref{eq:neighbor-E-root-series} give
\[
\begin{aligned}
G_\Phi^\partial(t)+qt^3E_\Phi(t)
&=1+t+t^2+q(t^2+t^4)+q^2(t^3+t^6)
  +qt^3(1+qt+qt^2)\\
&\quad+q^2t^3\sum_{\substack{\bar z\in\mathbb F_q\\
\bar\Phi(\bar z)=0}}
\Bigl((1+t)R_{\Phi,\bar z}(t)
 +(t^2+t^3)R_{\Phi,\bar z}(t^2)\Bigr).
\end{aligned}
\]
Next, Lemma~\ref{lem:neighbor-nonrecursive-cluster}, together with the
fact that \(N_\Phi(1)\) is the number of roots of \(\bar\Phi\) in
\(\mathbb F_q\), gives
\[
\begin{aligned}
qt^3\mathcal C_\Phi^{\mathrm{nr}}(t)
&=q^2t^6(q-1)+q^3t^6S_\Phi(t)\\
&\quad+q^2t^6(q-1)\sum_{\substack{\bar z\in\mathbb F_q\\
\bar\Phi(\bar z)=0}}
\bigl(R_{\Phi,\bar z}(t)+R_{\Phi,\bar z}(t^2)\bigr)\\
&\quad+q^2t^6\sum_{\substack{\bar z\in\mathbb F_q\\
\bar\Phi(\bar z)=0}}
\bigl(t^{-1}R_{\Phi,\bar z}(t)
+t^{-2}R_{\Phi,\bar z}(t^2)-1\bigr).
\end{aligned}
\]
Similarly, \eqref{eq:full-hnf-nonrecursive} gives
\[
-q^3t^6\mathcal H_\Phi^{\mathrm{nr}}(t)
=-q^3t^6-q^3t^6S_\Phi(t)
-q^3t^6\sum_{\substack{\bar z\in\mathbb F_q\\
\bar\Phi(\bar z)=0}}
\bigl(R_{\Phi,\bar z}(t)+R_{\Phi,\bar z}(t^2)\bigr).
\]
The recursive parts in \eqref{eq:full-hnf-recursion} and
\eqref{eq:neighbor-cluster-recursion} contribute
\[
q^2t^6\sum_{\bar z\in\mathcal R_0(\Phi)}
\mathcal C_{\Phi_z^+}(t)
-q^4t^9\sum_{\bar z\in\mathcal R_0(\Phi)}
\mathcal H_{\Phi_z^+}(t),
\]
whereas \eqref{eq:branch-sum-definition} gives
\[
-(1-t^3)q^2t^4\mathscr B_\Phi(t)
=-q^2t^3\sum_{\substack{\bar z\in\mathbb F_q\\
\bar\Phi(\bar z)=0}}
t(1-t^3)b_{\bar z}(t).
\]
Adding these contributions to the remaining term
\(-(1-t^3)(\frac1{1-t}+qt^2C_\square(t))\) gives
\begin{equation}\label{eq:neighbor-defect-expanded}
\begin{aligned}
\mathfrak E_\Phi(t)
&=\Biggl[
1+t+t^2+q(t^2+t^3+t^4)+q^2(t^3+t^4+t^5)\\
&\qquad-(1-t^3)\left(
\frac1{1-t}+qt^2C_\square(t)\right)
\Biggr]+q^3t^6S_\Phi(t)-q^3t^6S_\Phi(t)\\
&\quad+q^2t^3\sum_{\substack{\bar z\in\mathbb F_q\\
\bar\Phi(\bar z)=0}}
\Biggl[
\begin{aligned}
&(1+t+t^2-t^3)R_{\Phi,\bar z}(t)
+(t+t^2)R_{\Phi,\bar z}(t^2)\\
&\qquad-t^3-t(1-t^3)b_{\bar z}(t)
\end{aligned}
\Biggr]\\
&\quad+\sum_{\bar z\in\mathcal R_0(\Phi)}
\left(
q^2t^6\mathcal C_{\Phi_z^+}(t)
-q^4t^9\mathcal H_{\Phi_z^+}(t)
\right).
\end{aligned}
\end{equation}
Here since \(C_\square(t)=(1+qt)/(1-t)\),
\[
\begin{aligned}
(1-t^3)\left(\frac1{1-t}+qt^2C_\square(t)\right)
&=(1+t+t^2)+qt^2(1+t+t^2)(1+qt)\\
&=1+t+t^2+q(t^2+t^3+t^4)+q^2(t^3+t^4+t^5).
\end{aligned}
\]
Thus the first bracket in \eqref{eq:neighbor-defect-expanded} vanishes,
and the two displayed terms involving \(S_\Phi(t)\) cancel as well.
Consequently, \eqref{eq:neighbor-defect-expanded} has the form
\begin{equation}\label{eq:neighbor-defect-decomposition}
\mathfrak E_\Phi(t)
=\sum_{\substack{\bar z\in\mathbb F_q\\
\bar\Phi(\bar z)=0}}K_{\Phi,\bar z}(t)
+\sum_{\bar z\in\mathcal R_0(\Phi)}
\left(
q^2t^6\mathcal C_{\Phi_z^+}(t)
-q^4t^9\mathcal H_{\Phi_z^+}(t)
\right).
\end{equation}
Here, for \(\bar z\in\mathbb F_q\) satisfying \(\bar\Phi(\bar z)=0\),
\begin{equation}\label{eq:neighbor-single-root-defect}
\begin{aligned}
K_{\Phi,\bar z}(t):=q^2t^3\Bigl(&
(1+t+t^2-t^3)R_{\Phi,\bar z}(t)
+(t+t^2)R_{\Phi,\bar z}(t^2)\\
&-t^3-t(1-t^3)b_{\bar z}(t)\Bigr).
\end{aligned}
\end{equation}

\smallskip\noindent\emph{2. Terminal root types.}
For a simple root and an obstructed multiple root, respectively, one has
\[
\begin{array}{c|cc}
&R_{\Phi,\bar z}(u)&b_{\bar z}(t)\\ \hline
\text{simple}&u/(1-u)&(1-t)^{-2}\\
\text{obstructed}&u&(1-t)^{-1}.
\end{array}
\]
Substitution in \eqref{eq:neighbor-single-root-defect} gives
\(K_{\Phi,\bar z}(t)=0\) in both cases.

\smallskip\noindent\emph{3. Liftable multiple roots and induction.}
Let \(\bar z\in\mathcal R_0(\Phi)\).  Equations
\eqref{eq:liftable-root-tower},
\eqref{eq:directed-series-recursion}, and
\eqref{eq:branch-sum-definition} give
\[
R_{\Phi,\bar z}(u)=u+qu^2R_{\Phi_z^+}(u),
\qquad
b_{\bar z}(t)=C_\square(t)+qt^2\mathscr B_{\Phi_z^+}(t).
\]
Substitution in \eqref{eq:neighbor-single-root-defect}, using
\eqref{eq:neighbor-G-root-series}, \eqref{eq:neighbor-E-root-series}, and
\(C_\square(t)=(1+qt)/(1-t)\), gives
\begin{align*}
K_{\Phi,\bar z}(t)
&=qt^3\Biggl[
G_{\Phi_z^+}^\partial(t)+qt^3E_{\Phi_z^+}(t)-(1-t^3)\left(
\frac1{1-t}+qt^2C_\square(t)+q^2t^4\mathscr B_{\Phi_z^+}(t)
\right)\Biggr]\\
&\quad+q^3t^5(1-t^3)
\bigl(R_{\Phi_z^+}(t)-1-t(1-t)\mathscr B_{\Phi_z^+}(t)\bigr).
\end{align*}
The factor
\(R_{\Phi_z^+}(t)-1-t(1-t)\mathscr B_{\Phi_z^+}(t)\) vanishes by
\eqref{eq:root-tower-branch-identity}.  Hence we obtain
\[
\begin{aligned}
&K_{\Phi,\bar z}(t)
+q^2t^6\mathcal C_{\Phi_z^+}(t)-q^4t^9\mathcal H_{\Phi_z^+}(t)
\\
&\quad=qt^3\Biggl[
G_{\Phi_z^+}^\partial(t)
+qt^3\bigl(E_{\Phi_z^+}(t)+\mathcal C_{\Phi_z^+}(t)\bigr)\\
&\qquad
-(1-t^3)\left(
\frac1{1-t}+qt^2C_\square(t)+q^2t^4\mathscr B_{\Phi_z^+}(t)
\right)
-q^3t^6\mathcal H_{\Phi_z^+}(t)\Biggr]\\
&\quad=qt^3\mathfrak E_{\Phi_z^+}(t),
\end{aligned}
\]
where the last equality is the definition of \(\mathfrak E_{\Phi_z^+}(t)\).
Equation \eqref{eq:neighbor-defect-decomposition} therefore reduces to
\[
\mathfrak E_\Phi(t)=qt^3
\sum_{\bar z\in\mathcal R_0(\Phi)}
\mathfrak E_{\Phi_z^+}(t).
\]
We finish by induction on the nonnegative integer
\(d=v_q(\Delta(\Phi))\).  If \(\mathcal R_0(\Phi)\) is empty, the last
display gives \(\mathfrak E_\Phi(t)=0\).  If it is nonempty, then every
\(\Phi_z^+\) occurring on the right is integral, has nonzero
discriminant, and satisfies
\[
v_q(\Delta(\Phi_z^+))=d-2
\]
by \eqref{eq:directed-discriminant-drop}.  In particular, the latter
valuation is nonnegative, so the cases \(d=0,1\) have
\(\mathcal R_0(\Phi)=\varnothing\) and form the base of the induction.
For \(d\ge2\), the induction hypothesis gives
\(\mathfrak E_{\Phi_z^+}(t)=0\) for every liftable multiple root
\(\bar z\).  The preceding recurrence then gives
\(\mathfrak E_\Phi(t)=0\), which is
\eqref{eq:hnf-neighbor-calculation}.
\end{proof}

\begin{theorem}[Recursion for Yun's local zeta factor]
\label{thm:yun-local-zeta-recursion}
With \(A\), \(U\), and \(\Phi\) as in
\eqref{eq:dilated-polynomial},
\begin{equation}\label{eq:yun-local-zeta-recursion}
J_A(s)=
\frac1{1-t}+qt^2C_\square(t)
+q^2t^4\mathscr B_\Phi(t)
+q^3t^6J_U(s).
\end{equation}
\end{theorem}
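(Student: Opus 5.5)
The recursion follows by combining three results already established: the compact Hermite-normal-form expression \eqref{eq:dilated-hnf-series-compact} for \(J_A(s)\), the generating-function identity of Proposition~\ref{prop:hnf-neighbor-calculation}, and the formula \eqref{eq:monogenic-full-hnf-series} for \(J_U(s)\). Almost all of the work has been done in those statements; what remains is to check that they apply to the present \(A\) and \(U\) and then divide.

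First I check the hypotheses of \eqref{eq:dilated-hnf-series-compact}. By \eqref{eq:dilated-polynomial}, the order \(A=\Z_q[q\alpha]\) has defining polynomial
\[
F(X)=X^3+quX^2+q^2vX+q^3w,
\]
which is exactly the square-zero specialization \eqref{eq:general-square-zero-polynomial}. Moreover
\[
\Phi_A(T)=T^3+uT^2+vT+w=\Phi .
\]
Propositions~\ref{prop:peeled-hnf-series} and~\ref{prop:hnf-root-count-parametrization} only use \(F\) in this form. The standing phrase ``square-zero first-overorder case'' there enters solely through this shape of \(F\). Hence they apply, and give
\[
(1-t^3)J_A(s)=G_\Phi^\partial(t)+qt^3\bigl(E_\Phi(t)+\mathcal C_\Phi(t)\bigr).
\]
The discriminant of \(\Phi\) is nonzero because \(E\) is étale, so Proposition~\ref{prop:hnf-neighbor-calculation} applies to \(\Phi\). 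It rewrites the right-hand side as
\[
(1-t^3)\Bigl(\frac1{1-t}+qt^2C_\square(t)+q^2t^4\mathscr B_\Phi(t)\Bigr)+q^3t^6\mathcal H_\Phi(t).
\]

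Next I apply \eqref{eq:monogenic-full-hnf-series} to \(U=\Z_q[\alpha]\) with defining polynomial \(\Phi\). This gives \(\mathcal H_\Phi(t)=(1-t^3)J_U(s)\). Substituting this and dividing by \(1-t^3\), which is an invertible formal power series, yields \eqref{eq:yun-local-zeta-recursion}.

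The main point to verify is the hypothesis of \eqref{eq:dilated-hnf-series-compact}. The derivation of the first peeling, Proposition~\ref{prop:general-square-zero-first-peeling}, only reduces the congruences \eqref{eq:general-hnf-1}--\eqref{eq:general-hnf-3} modulo \(q\) using the coefficients \(qu,q^2v,q^3w\). So it holds for any \(u,v,w\in\Z_q\), even when \(A\) is not literally in case (c) of Proposition~\ref{prop:first-overorder-single-residue}, and even when \(\Phi\) has no root modulo \(q\). I would state this explicitly. Beyond that, the argument is bookkeeping, since the substantive identity is Proposition~\ref{prop:hnf-neighbor-calculation}.
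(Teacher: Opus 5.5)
Your proposal is correct and is exactly the paper's argument: combine \eqref{eq:dilated-hnf-series-compact}, Proposition~\ref{prop:hnf-neighbor-calculation}, and \eqref{eq:monogenic-full-hnf-series}, then divide by the invertible series \(1-t^3\). Your extra check of hypotheses is sound, though the hedge about case~(c) is unnecessary. The coefficients of \(X\) and \(1\) in \(F(X)=X^3+quX^2+q^2vX+q^3w\) are \(q\cdot(qv)\) and \(q^2\cdot(qw)\), so both \(\bar v\) and \(\bar w_1\) in the notation of Proposition~\ref{prop:first-overorder-single-residue} vanish, and \(A=\Z_q[q\alpha]\) is therefore automatically in the square-zero first-overorder case.
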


\begin{proof}
Divide \eqref{eq:hnf-neighbor-calculation} by \(1-t^3\), and use
\eqref{eq:dilated-hnf-series-compact} and
\eqref{eq:monogenic-full-hnf-series}.
\end{proof}

\section{Proof for Gorenstein cubic orders}
\label{sec:proof-local-comparison}

\begin{proof}[Proof of Theorem~\ref{thm:local-conjecture-a}]
Suppose first that \(q=2\) and \(A\simeq\Z_2^3\).  Then
\(E\simeq\Q_2^3\) and \(A=\CO_E\).  Since
\(\CO_E^\vee=\CO_E\), the lattices counted by \(J_A(s)\) are the
integral ideals of \(\CO_E\), and hence \(J_A(s)=\zeta_E(s)\).
There is no strict overorder and \(h_A(s)=1\), so
\[
\widetilde L_A(t)=\zeta_E(s)=J_A(s).
\]
In every other case, Proposition~\ref{prop:gorenstein-cubic-monogenic}
shows that \(A\) is monogenic.

We now argue by induction on \(\delta(A)\).  Choose a monogenic presentation
\[
A=\Z_q[\theta]\simeq\Z_q[X]/(F_0(X)).
\]
If \(A\) is maximal, Proposition~\ref{prop:local-base-cases} applies, so
assume that \(A\) is nonmaximal.  By
Remark~\ref{rmk:reduction-to-triple-root}, \(\overline F_0\) is then not
squarefree.  Since \(\mathbb F_q\) is perfect, a non-squarefree monic
cubic over \(\mathbb F_q\) has one of the two forms
\[
(X-\bar a)^2(X-\bar b)\quad(\bar a\ne\bar b),
\qquad
(X-\bar a)^3.
\]
In the first case \(\bar b\) is a simple root, and Proposition
\ref{prop:local-base-cases} again applies.  We may therefore assume that
\(\overline F_0=(X-\bar a)^3\).  Choose a lift \(a\in\Z_q\) of \(\bar a\)
and set
\[
\beta=\theta-a,
\qquad
F(X)=F_0(X+a).
\]
Then \(\Z_q[\beta]=\Z_q[\theta]=A\), while
\(\overline F(X)=X^3\).  Hence, for some \(u,v,w\in\Z_q\),
\[
A=\Z_q[\beta],
\qquad
F(\beta)=0,
\qquad
F(X)=X^3+quX^2+qvX+qw.
\]

Since \(A\) is nonmaximal, Proposition
\ref{prop:first-overorder-single-residue} gives \(w=qw_1\).  Corollary
\ref{cor:first-overorder-terminal-cases} handles cases \textup{(a)} and
\textup{(b)} of that proposition.  In the remaining case \textup{(c)},
one has \(v\in q\Z_q\) and \(w_1\in q\Z_q\).  Write
\(v=qv_1\) and \(w_1=qw_2\).  The polynomial above then becomes
\[
F(X)=X^3+quX^2+q^2v_1X+q^3w_2.
\]
Renaming \(v_1,w_2\) as \(v,w\), respectively, we have
\[
F(X)=X^3+quX^2+q^2vX+q^3w,
\qquad
A=\Z_q[\beta],
\qquad
F(\beta)=0.
\]
Set
\[
\alpha=\frac{\beta}{q},
\qquad
\Phi(T)=T^3+uT^2+vT+w.
\]
Since \(F(qT)=q^3\Phi(T)\), one has \(\Phi(\alpha)=0\).  Put
\[
U=\Z_q[\alpha],
\qquad
S=\Z_q+qU.
\]
Then
\[
S=A+\Z_q\frac{\beta^2}{q},
\]
so Proposition~\ref{prop:first-overorder-single-residue} identifies \(S\)
with the unique first overorder of \(A\).  Moreover,
\[
[S:A]=q,\qquad [U:S]=q^2,\qquad [U:A]=q^3,
\]
and therefore \(\delta(U)=\delta(A)-3\).  The order \(U=\Z_q[\alpha]\)
is monogenic, hence Gorenstein.  Since \(\delta(U)<\delta(A)\), the
induction hypothesis gives \(J_U(s)=\widetilde L_U(t)\).  Substituting
this identity into Theorem~\ref{thm:yun-local-zeta-recursion} and then
using the overorder decomposition
\eqref{eq:q-scaled-overorder-decomposition}, we obtain
\[
\begin{aligned}
J_A(s)
&=\frac1{1-t}+qt^2C_\square(t)
+q^2t^4\mathscr B_\Phi(t)
+q^3t^6\widetilde L_U(t)\\
&=\frac1{1-t}+qt^2\bigl(
C_\square(t)+qt^2\mathscr B_\Phi(t)+q^2t^4\widetilde L_U(t)
\bigr)\\
&=\frac1{1-t}+qt^2\widetilde L_S(t).
\end{aligned}
\]
On the other hand, since \(A\) is monogenic and local with one residual factor,
\(C_A(t)=(1-t)^{-1}\).  By Proposition
\ref{prop:first-overorder-contained}, every strict overorder of
\(A\) contains \(S\), and \([S:A]=q\).  Separating the term \(T=A\)
from the remaining terms in \eqref{eq:local-overorder-t-series} therefore
gives
\[
\widetilde L_A(t)
=C_A(t)+qt^2\widetilde L_S(t)
=\frac1{1-t}+qt^2\widetilde L_S(t).
\]
Comparing the two formulas proves
\(J_A(s)=\widetilde L_A(t)\) and completes the induction.
\end{proof}

\begin{corollary}\label{cor:global-conjecture-a}
Let \(R\) be a Gorenstein cubic order over \(\Z\).  Then Conjecture~A
holds:
\[
J_R(s)=L(s,R)\zeta_\Q(s).
\]
\end{corollary}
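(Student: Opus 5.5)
The plan is to deduce the corollary from Theorem~\ref{thm:local-conjecture-a} by a local-to-global argument, using the normalizations fixed in Section~\ref{sec:local-normalization}. Let \(R\) be a Gorenstein cubic \(\Z\)-order in the étale cubic \(\Q\)-algebra \(E\). For every prime \(q\), the completion \(R_q\) is a Gorenstein cubic \(\Z_q\)-order in the étale algebra \(E_q\), because the Gorenstein property is local: primitivity of the Delone--Faddeev form (Proposition~\ref{prop:criterion-gorenstein}) is preserved under \(\Z\to\Z_q\). Theorem~\ref{thm:local-conjecture-a} then gives \(J_{R_q}(s)=\widetilde L_{R_q}(t)\) with \(t=q^{-s}\). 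By \eqref{eq:local-overorder-normalization} and the definition of \(\widetilde J_{R_q}\), this is the same as \eqref{eq:local-yun-deng-espinosa-identity}, namely \(\widetilde J_{R_q}(s)=\widetilde L_q(s,R)\). Concretely, multiply both sides by \(q^{\delta(R_q)s}\zeta_{E_q}(s)^{-1}\) and note that \(\prod_i(1-q^{-n_is})=\zeta_{E_q}(s)^{-1}\).

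Next, take the product over all primes, for \(\Re s\) large enough that every Euler product converges absolutely. Almost all factors are trivial, since \(R_q=\CO_{E_q}\) for \(q\nmid[\CO_E:R]\). Yun's identity \(\Lambda^{\mathrm{Yun}}_R/\Lambda_E=\prod_q\widetilde J_{R_q}\) and the Deng--Espinosa identity \([\CO_E:R]^sL(s,R)\zeta_\Q(s)/\zeta_E(s)=\prod_q\widetilde L_q(s,R)\) then give
\[
\frac{D_R^{s/2}J_R(s)}{|\Delta_E|^{s/2}\zeta_E(s)}
=[\CO_E:R]^s\frac{L(s,R)\zeta_\Q(s)}{\zeta_E(s)},
\]
because the archimedean factors \(\Gamma_{E,\infty}\) cancel. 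Equation \eqref{eq:global-discriminant-index} gives \((D_R/|\Delta_E|)^{s/2}=[\CO_E:R]^s\), so after cancelling we get \(J_R(s)=L(s,R)\zeta_\Q(s)\) on a right half-plane. Both sides are Dirichlet series, so this is an equality of Dirichlet series.

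The main point to check is that the local-global factorizations really hold in the form quoted. This has two parts. First, \(J_R\) factors as \(\prod_q J_{R_q}\): a finite-index \(R\)-submodule of \(R^\vee\) is determined by its completions, and its index is the product of the local indices. Second, the overorders of \(R\) in \(\CO_E\) correspond bijectively to tuples of local overorders, with \(h_S\), \(L_S\) and \([S:R]\) all multiplicative. Both parts are recorded in Section~\ref{sec:local-normalization}. The only further issue is the mild one of identifying the local Gorenstein hypothesis at each \(q\), which is handled as described in the first paragraph.
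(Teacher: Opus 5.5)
Your proposal is correct and follows the paper's argument. You localize at each prime, where $R_q$ remains Gorenstein, apply Theorem~\ref{thm:local-conjecture-a} in its normalized form $\widetilde J_{R_q}(s)=\widetilde L_q(s,R)$, and multiply the local identities using Yun's product formula, the Deng--Espinosa local--global index formula, and \eqref{eq:global-discriminant-index}; you simply write out the cancellations that the paper's two-line proof leaves implicit.
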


\begin{proof}
For every prime \(q\), the order
\(R_q:=R\otimes_{\Z}\Z_q\) is Gorenstein.  Apply
Theorem~\ref{thm:local-conjecture-a} to \(R_q\), and use the local-global
products in Section~\ref{sec:local-normalization}.
\end{proof}

\section{Application to the local
\texorpdfstring{\(\GL_3\)}{GL3} Kloosterman series}
\label{sec:kloosterman-application}

We now apply Theorem~\ref{thm:local-conjecture-a} and the HNF
parametrization of Section~\ref{sec:yun-local-zeta-recursion} to the local
factors of the Kloosterman Dirichlet series in
\cite[Definition 19]{DengEspinosaGL3PS}.  The point is that
averaging the HNF congruences over the two variable coefficients  replaces all of the residue-characteristic
and factorization-type case work by one linear map.

Fix a prime \(q\) and \(0\ne c\in\Z_q\), and put
\[
e:=v_q(c).
\]
Since \(c\) is fixed throughout the local calculation, we suppress it
from all indexed notation in this section.
All generating-series identities in this section are initially formal.

\subsection{Kloosterman sums and the HNF reduction}
\label{subsec:kloosterman-sums-hnf-reduction}

Let \(R\) be a cubic \(\Z_q\)-order in an \'etale cubic
\(\Q_q\)-algebra.  Recall that the function \(C_R(t)\) from
\eqref{eq:local-overorder-t-series} may be viewed formally as
\[
C_R(t)=H_R(t)Z_R(t),
\qquad
Z_R(t)=\prod_i(1-t^{f_i})^{-1},
\]
where the product is over the maximal ideals \(\mathfrak m_i\) of \(R\),
\(f_i=[R/\mathfrak m_i:\mathbb F_q]\), and
\[
H_R(t)=
\begin{cases}
1,&R\text{ is Gorenstein},\\
1+qt,&R\text{ is not Gorenstein}.
\end{cases}
\]
Thus \(C_R(t)=h_R(s)\zeta_R(s)\) when \(t=q^{-s}\).  For \(v\ge0\), set
\begin{equation}\label{eq:kloosterman-local-weight}
\kappa_R(v):=[t^v](1-t)C_R(t).
\end{equation}
This is the local coefficient denoted by
\(\Sigma_{R,q}(q^v)\) in \cite[\S8.2]{DengEspinosaGL3PS}.

For \((a,b)\in\Z_q^2\), define
\[
\Phi_{a,b}(X):=X^3-aX^2+bX-c,
\qquad
R(a,b):=\Z_q[X]/(\Phi_{a,b}(X)).
\]
The element \(c\) remains fixed throughout.  For every \(N\ge0\), each
class in \((\Z_q/q^N\Z_q)^2\) has a representative \((a,b)\in\Z_q^2\)
for which \(\Phi_{a,b}\) has nonzero discriminant.  Indeed,
the discriminant is a nonzero polynomial in \((a,b)\), since its
value at \((0,0)\) is \(-27c^2\ne0\), and a nonzero polynomial over
\(\Q_q\) cannot vanish on a \(q\)-adic residue ball.

Following Deng--Espinosa's notation
\cite[\S8.2]{DengEspinosaGL3PS}, with
the constant coefficient \(-c\) fixed, put, for \(v,r\ge0\) and
\((a,b)\) such that
\(\Phi_{a,b}\) has nonzero discriminant,
\[
S_q(a,b;v,r)
:=
\sum_{\substack{R\supseteq R(a,b)\\
[R:R(a,b)]=q^r}}
\kappa_R(v).
\]

\begin{lemma}[Local constancy]\label{lem:kloosterman-local-constancy}
Let \(v,r\ge0\).  Suppose that \((a,b),(a',b')\in\Z_q^2\) satisfy
\[
(a,b)\equiv(a',b')\pmod {q^{v+2r}}
\]
and that both \(\Phi_{a,b}\) and \(\Phi_{a',b'}\) have nonzero
discriminant.  Then
\[
S_q(a,b;v,r)=S_q(a',b';v,r).
\]
\end{lemma}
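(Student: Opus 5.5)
The plan is to show that \(S_q(a,b;v,r)\) depends only on \((a,b)\) modulo \(q^{v+2r}\). We will do this by expressing it through the data of the overorders \(R\supseteq R(a,b)\) of index \(q^r\), and by showing that this data is visible modulo \(q^{v+2r}\).

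First, each overorder \(R\supseteq R_0:=R(a,b)\) with \([R:R_0]=q^r\) corresponds, by Lemma~\ref{lem:overorders-by-truncated-ideals}, to an \(R_0\)-submodule \(I\) with
\[
q^rR_0\subseteq I\subseteq R_0,\qquad \ell(I/q^rR_0)=r,\qquad I^2\subseteq q^rI .
\]
Work in the basis \(1,X,X^2\) of \(R_0\), whose multiplication matrix is the companion matrix \(C_{\Phi_{a,b}}\). Let \(I\) have Hermite normal form matrix \(H\), with diagonal entries dividing \(q^r\). Being an \(R_0\)-module means \(H^{-1}CH\) is integral. The condition \(I^2\subseteq q^rI\) is an integrality condition on products of columns of \(H\), divided by \(q^r\), expressed in the basis \(H\). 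Every denominator involved is at most \(q^{2r}\), so all these conditions depend only on \((a,b)\) modulo \(q^{2r}\). Hence the sets of admissible HNF data for \((a,b)\) and \((a',b')\) coincide, which gives a canonical bijection \(R\leftrightarrow R'\).

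Second, we compare \(\kappa_R(v)\) with \(\kappa_{R'}(v)\). By \eqref{eq:kloosterman-local-weight}, \(\kappa_R(v)\) depends only on the Gorenstein property of \(R\) and on the residue degrees \(f_i\) of its maximal ideals. All of this is read off from \(R/qR\) together with \(\bar\Phi\)-type data. So it suffices to show that, under the bijection, \(R/qR\simeq R'/qR'\) as \(\mathbb F_q\)-algebras. We will write \(R=q^{-r}I\) with basis \(q^{-r}\)(columns of \(H\)). Its structure constants are then polynomials in \(a,b,c\) and the HNF entries, divided by powers of \(q\) of total exponent at most \(2r\) (coming from \(H^{-1}\) and the \(q^{-r}\) scaling). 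Consequently the structure constants of \(R\) and \(R'\) agree modulo \(q^{v+2r-2r}=q^v\). For \(v\ge1\) this gives isomorphic special fibers and hence equal \(\kappa\). In the edge case \(v=0\) we have \(\kappa_R(0)=1\) for every \(R\), so \(S_q\) is simply the number of index-\(q^r\) overorders, and the first step already suffices.

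The main obstacle is the precise denominator bound in both steps: showing that \(q^{2r}\), and not something larger such as \(q^{3r}\), controls the overorder conditions. For this we will first try the basis \(1,x,y\) adapted to \(R_0=\Z_q\oplus M\), as in Lemma~\ref{lem:first-layer-intersection}, where the scalar part never needs division. Only the \(2\times2\) block \(M\) carries \(H\), so the determinant of \(H\) restricted to \(M\) has valuation at most \(r\), and each of the two divisions costs at most \(q^r\). If the uniform bound fails, we will instead use that \(\kappa_R(v)\) only depends on \(R/qR\) via \((1-t)C_R\) truncated at degree \(v\). We will then compare orders through the recursion of Theorem~\ref{thm:q-scaled-overorder-decomposition}, which tracks how \(\Phi_{a,b}\) modulo powers of \(q\) determines the root types.
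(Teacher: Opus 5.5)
Your approach is correct in substance, but it differs from the paper's. The paper gives no argument of its own here; it simply invokes \cite[Corollary~1]{DengEspinosaGL3PS}. You instead prove local constancy directly. You transport each index-$q^r$ overorder as a fixed lattice through the basis $1,X,X^2$, and show that neither its ring structure nor its special fiber changes under the perturbation of $(a,b)$. This gives a self-contained proof and a slightly stronger statement. Your estimate shows that $S_q(a,b;v,r)$ depends only on $(a,b)$ modulo $q^{2r}$ when $v=0$, and only modulo $q^{2r+1}$ for every $v\ge1$. The citation gives brevity, and states the result in exactly the form later used in Proposition~\ref{prop:kloosterman-yun-identity}.

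The one point you leave open is the denominator bound. It needs neither the adapted basis $\Z_q\oplus M$ nor the fallback through Theorem~\ref{thm:q-scaled-overorder-decomposition}. Let $L=\Z_q\langle1,X,X^2\rangle$, and let $*$ and $*'$ be the two multiplications. Their structure constants are integral polynomials in $a,b,c$, so $x*y-x*'y\in q^{v+2r}L$ for $x,y\in L$. By Lemma~\ref{lem:overorders-by-truncated-ideals}, every index-$q^r$ overorder is a lattice with $L\subseteq S\subseteq q^{-r}L$, and this condition does not involve $(a,b)$. Hence
\[
x*y-x*'y\in q^{-2r}q^{v+2r}L=q^{v}L\subseteq q^{v}S\qquad(x,y\in S).
\]
Since $v\ge0$ and $L\subseteq S$, the lattice $S$ is closed under $*$ exactly when it is closed under $*'$; this is your Step~1. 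For $v\ge1$ the two multiplications agree on $S/qS$, which is your Step~2.

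In your HNF bookkeeping this is the observation that $q^rL\subseteq I=H\Z_q^3$ forces $q^rH^{-1}$ to be integral. So $H^{-1}$ costs only a factor $q^{r}$, not $q^{2r}=\det H$, and the total is $2r$ rather than $3r$.
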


\begin{proof}
This is 
\cite[Corollary~1]{DengEspinosaGL3PS}.
\end{proof}

For fixed \(v,r\ge0\), choose for each class in
\((\Z_q/q^{v+2r}\Z_q)^2\) a representative
\((a,b)\in\Z_q^2\) for which \(\Phi_{a,b}\) has nonzero discriminant.
Lemma~\ref{lem:kloosterman-local-constancy} makes the following definition
independent of the representatives:
\[
K_q(q^v,q^r)
:=
\sum_{(a,b)\bmod q^{v+2r}}
S_q(a,b;v,r).
\]
Local constancy permits each coefficient class in the
Deng--Espinosa definition to be evaluated at any such representative.
Thus, if \(p\) is a prime and \(k\ge1\), then for \(c=\pm p^k\) the
quantity \(K_q(q^v,q^r)\) is precisely their local Kloosterman sum for
polynomials with constant coefficient \(\mp p^k\).
The local factor of the Deng--Espinosa  Dirichlet series
\cite[Definitions~19]{DengEspinosaGL3PS} is
\begin{equation}\label{eq:kloosterman-local-series}
D_q(z)
:=
\sum_{v,r\ge0}
\frac{K_q(q^v,q^r)}
{q^{v(z+2)+r(2z+3)}}.
\end{equation}
With \(x=q^{-z}\), this is a formal series in \(x\), grouped by the
degree \(v+2r\); every coefficient then contains only finitely many pairs
\((v,r)\).

For \(k\ge0\), set
\[
\mathcal A_k
:=
\sum_{r=0}^{\lfloor k/2\rfloor}
q^{-2k+r}K_q(q^{k-2r},q^r).
\]
\begin{lemma}\label{lem:kloosterman-diagonal-grouping}
With \(x=q^{-z}\), one has the formal power-series identity
\begin{equation}\label{eq:kloosterman-diagonal-series}
D_q(z)=\sum_{k\ge0}\mathcal A_kx^k.
\end{equation}
\end{lemma}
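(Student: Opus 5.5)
The identity is a bookkeeping regrouping of the double series \eqref{eq:kloosterman-local-series}, so the plan is to substitute \(x=q^{-z}\) termwise and collect by total degree. For fixed \(v,r\ge0\), the summand is
\[
\frac{K_q(q^v,q^r)}{q^{v(z+2)+r(2z+3)}}
=K_q(q^v,q^r)\,q^{-2v-3r}\,(q^{-z})^{v+2r}
=K_q(q^v,q^r)\,q^{-2v-3r}\,x^{v+2r}.
\]
Thus each pair \((v,r)\) contributes a monomial of degree \(k:=v+2r\) in \(x\).

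Next I group the pairs by \(k\). For a given \(k\ge0\), the pairs with \(v+2r=k\) are exactly \(v=k-2r\) with \(0\le r\le\lfloor k/2\rfloor\); this is a finite set, so the coefficient of \(x^k\) is a finite sum. This is also what justifies treating \(D_q(z)\) as a formal power series in \(x\), since each coefficient involves only finitely many \((v,r)\), as noted right after \eqref{eq:kloosterman-local-series}. With \(v=k-2r\), the exponent becomes
\[
-2v-3r=-2(k-2r)-3r=-2k+r .
\]
Hence the coefficient of \(x^k\) is \(\sum_{r=0}^{\lfloor k/2\rfloor}q^{-2k+r}K_q(q^{k-2r},q^r)=\mathcal A_k\), which gives \eqref{eq:kloosterman-diagonal-series}.

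Nothing here is a real obstacle. The only point needing care is that the rearrangement is legitimate: it is a reindexing of a formal double series whose fibers over each degree \(k\) are finite, so no convergence issue arises. For \(\Re(z)\) large, one can also note that the \(K_q\) are finite sums and therefore the series converges absolutely, but this is not needed for the formal statement.
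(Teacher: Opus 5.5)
Your proof is correct and is essentially the paper's own argument: substitute $x=q^{-z}$, note that each pair $(v,r)$ contributes $q^{-2v-3r}x^{v+2r}$, and regroup the finitely many pairs with $v+2r=k$ via $v=k-2r$, so that $-2v-3r=-2k+r$. Your closing aside about absolute convergence for large $\Re(z)$ would need a growth bound on $K_q(q^v,q^r)$, not just the fact that each $K_q$ is a finite sum, but as you say it is not needed for the formal identity.
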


\begin{proof}
For \(v,r\ge0\), set \(k=v+2r\).  Then
\(0\le r\le\lfloor k/2\rfloor\), \(v=k-2r\), and
\[
q^{-v(z+2)-r(2z+3)}
=x^kq^{-2k+r}.
\]
Conversely, every pair \((k,r)\) in this range determines the unique pair
\((v,r)=(k-2r,r)\).  Regrouping
\eqref{eq:kloosterman-local-series} by \(k=v+2r\), the coefficient
of \(x^k\) is therefore
\[
\sum_{r=0}^{\lfloor k/2\rfloor}
q^{-2k+r}K_q(q^{k-2r},q^r)=\mathcal A_k.
\]
This proves \eqref{eq:kloosterman-diagonal-series}.
\end{proof}

For \((a,b)\in\Z_q^2\) such that \(\Phi_{a,b}\) has nonzero
discriminant, write
\[
J_{a,b}(t)
:=
\sum_{n\ge0}b_{a,b}(n)t^n
\]
for Yun's local zeta factor of \(R(a,b)\), viewed as a formal power
series.

\begin{proposition}
\label{prop:kloosterman-yun-identity}
For every \(k\ge0\),
\begin{equation}\label{eq:kloosterman-yun-identity}
\mathcal A_k
=q^{-2k}
\sum_{(a,b)\bmod q^k}
[t^k](1-t)J_{a,b}(t).
\end{equation}
The right-hand side is independent of the representatives chosen for
\((a,b)\bmod q^k\), provided the corresponding cubics have nonzero
discriminant.
\end{proposition}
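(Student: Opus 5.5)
The approach is to unwind the definitions and apply Theorem~\ref{thm:local-conjecture-a} to each $R(a,b)$; $\Phi_{a,b}$ is monic, so $R(a,b)$ is monogenic and hence Gorenstein.

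First fix $(a,b)$ with nonzero discriminant and put $R_0=R(a,b)$. By \eqref{eq:local-overorder-t-series},
\[
(1-t)\widetilde L_{R_0}(t)=\sum_{R_0\subseteq T\subseteq\CO_E}q^{\delta(T/R_0)}t^{2\delta(T/R_0)}(1-t)C_T(t).
\]
Taking the coefficient of $t^k$ and grouping the overorders by $r=\delta(T/R_0)$ gives
\[
[t^k](1-t)\widetilde L_{R_0}(t)=\sum_{r=0}^{\lfloor k/2\rfloor}q^r\sum_{[T:R_0]=q^r}\kappa_T(k-2r)=\sum_{r}q^rS_q(a,b;k-2r,r).
\]
The terms with $2r>k$ vanish, because $(1-t)C_T(t)$ is a power series in $t$. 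Theorem~\ref{thm:local-conjecture-a} gives $J_{R_0}(s)=\widetilde L_{R_0}(t)$ as formal series in $t$, so
\[
[t^k](1-t)J_{a,b}(t)=\sum_{r=0}^{\lfloor k/2\rfloor}q^r S_q(a,b;k-2r,r).
\]

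Next sum this over $(a,b)\bmod q^k$ and compare with $\mathcal A_k$. For each $r$, the modulus in $K_q(q^{k-2r},q^r)$ is $q^{v+2r}=q^k$, so the class sets agree. By Lemma~\ref{lem:kloosterman-local-constancy}, each $S_q(a,b;k-2r,r)$ depends only on $(a,b)\bmod q^k$. Hence
\[
\sum_{(a,b)\bmod q^k}[t^k](1-t)J_{a,b}(t)=\sum_r q^rK_q(q^{k-2r},q^r).
\]
Multiplying by $q^{-2k}$ gives exactly $\mathcal A_k$.

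Independence of representatives follows from the same chain of equalities. Each summand equals $\sum_r q^rS_q(a,b;k-2r,r)$, and local constancy makes this depend only on the class $(a,b)\bmod q^k$.

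The main point needing care is the coefficient extraction in the first step. One must check that $\kappa_T(v)=[t^v](1-t)C_T(t)$ matches the normalization $h_T\zeta_T$ of the $C_T$ appearing in $\widetilde L$. One must also check that the weight $q^{\delta}t^{2\delta}$ produces exactly the factor $q^r$ with shift $2r$, and that this is consistent with the exponent $q^{-2k+r}$ in $\mathcal A_k$. Both are direct from the definitions, so no real obstacle is expected.
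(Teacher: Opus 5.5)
Your proposal is correct and follows the paper's proof essentially step for step. You apply Theorem~\ref{thm:local-conjecture-a} to the monogenic (hence Gorenstein) order $R(a,b)$, extract $[t^k]$ after multiplying by $1-t$ to get $\sum_r q^r S_q(a,b;k-2r,r)$, and invoke local constancy at modulus $q^{(k-2r)+2r}=q^k$ before summing over classes. The one detail the paper states explicitly that you pass over quickly is that $J_{a,b}=\widetilde L_{R(a,b)}$ is first an identity for $t=q^{-s}$ with $\Re(s)$ large, and hence holds coefficientwise as power series in $t$.
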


\begin{proof}
Theorem~\ref{thm:local-conjecture-a} and
\eqref{eq:local-overorder-t-series} give
\[
J_{a,b}(t)
=
\sum_{r\ge0}q^rt^{2r}
\sum_{\substack{R\supseteq R(a,b)\\
[R:R(a,b)]=q^r}}
C_R(t).
\]
The identity first holds for \(t=q^{-s}\) with \(\Re(s)\) large and hence
holds coefficientwise as an identity of power series at \(t=0\).
Multiplying by \(1-t\) and extracting the coefficient of \(t^k\) gives
\[
[t^k](1-t)J_{a,b}(t)
=
\sum_{r=0}^{\lfloor k/2\rfloor}
q^rS_q(a,b;k-2r,r).
\]
Each summand depends only on
\((a,b)\bmod q^k\), by
Lemma~\ref{lem:kloosterman-local-constancy}, because
\((k-2r)+2r=k\).  Summing over the coefficient classes modulo \(q^k\)
proves \eqref{eq:kloosterman-yun-identity} and the asserted
independence of representatives.
\end{proof}

Using the notation of Definition~\ref{def:cluster-set}, with its indices
renamed \(\alpha,\beta\),
equation~\eqref{eq:monogenic-full-hnf-series} specializes to
\begin{equation}\label{eq:kloosterman-hnf-formula}
J_{a,b}(t)
=\frac{1}{1-t^3}
\sum_{\alpha,\beta\ge0}
\#\mathfrak C_{\Phi_{a,b}}(\alpha,\beta)
t^{\alpha+2\beta}.
\end{equation}

\subsection{Averaged congruence counts}

For fixed \(\alpha,\beta\), we average
\(\#\mathfrak C_{\Phi_{a,b}}(\alpha,\beta)\), the number of triples
satisfying the congruences in Definition~\ref{def:cluster-set}, over
the coefficient classes
\((a,b)\bmod q^{\alpha+\beta}\).

For \(d\ge0\) and \(0\le j\le d\), define
\[
\omega_d(j):=
\begin{cases}
1-q^{-1},&0\le j<d,\\
1,&j=d.
\end{cases}
\]

\begin{lemma}
\label{lem:kloosterman-averaged-cluster}
For \(\alpha,\beta\ge0\), put \(n=\alpha+\beta\).  Then
\begin{equation}\label{eq:kloosterman-averaged-cluster}
q^{-2n}
\sum_{(a,b)\bmod q^n}
\#\mathfrak C_{\Phi_{a,b}}(\alpha,\beta)
=
\sum_{\substack{0\le\rho\le\alpha,\ 0\le\sigma\le\beta\\
\rho+\sigma\le e}}
\omega_\alpha(\rho)\omega_\beta(\sigma).
\end{equation}
\end{lemma}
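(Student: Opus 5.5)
We average by swapping the order of summation. Instead of counting triples for each fixed coefficient pair, we fix the pair \((X,Y)\) and count the pairs \((a,b)\bmod q^n\) together with the \(Z\)-coordinate. Concretely, the left side of \eqref{eq:kloosterman-averaged-cluster} is \(q^{-2n}\) times the number of quintuples \((a,b,X,Y,Z)\) with \((a,b)\bmod q^n\), \(X\bmod q^\alpha\), \(Y\bmod q^\beta\), \(Z\bmod q^m\) satisfying \eqref{eq:cluster-divided-difference}--\eqref{eq:cluster-linear} for \(\Phi=\Phi_{a,b}\), where \(m=\min(\alpha,\beta)\) and \(M=\max(\alpha,\beta)\).

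Treat the case \(\alpha\le\beta\); the case \(\alpha>\beta\) is symmetric via the involution \((X,Y,Z)\mapsto(Y,X,-Z)\) used in Lemma~\ref{lem:neighbor-nonrecursive-cluster}. The boundary cases \(\alpha=0\) or \(\beta=0\), where the conditions reduce to root counts, can be checked directly or absorbed into the same argument. The key observation is that, for fixed lifts \(X,Y\in\Z_q\), the quantities
\[
D_{\Phi_{a,b}}(X,Y)=X^2+XY+Y^2-a(X+Y)+b,
\qquad
\Phi_{a,b}(Y)=Y^3-aY^2+bY-c
\]
are affine-linear in \((a,b)\). Hence the three congruences become a single affine-linear system in the unknowns \((a,b,Z)\):
\[
D_{\Phi_{a,b}}(X,Y)\equiv0\pmod{q^\alpha},\qquad
\Phi_{a,b}(Y)-q^\beta(X-Y)Z\equiv0\pmod{q^{n}}.
\]
This is a map \(\mathcal L_{X,Y}:(\Z_q/q^n)^2\times\Z_q/q^\alpha\to\Z_q/q^\alpha\times\Z_q/q^n\) with a target vector depending on \(c\). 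The number of solutions is \(\#\ker\mathcal L_{X,Y}\) if the target lies in the affine image, and \(0\) otherwise.

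Next, compute the image. Use \(b\) to clear the first coordinate, substituting \(b=a(X+Y)-(X^2+XY+Y^2)+q^\alpha b'\). The second coordinate then becomes
\[
aXY+q^\alpha b'Y-q^\beta(X-Y)Z-c+(\text{terms in }X,Y)\pmod{q^n}.
\]
Put \(\rho=\min(v_q(X),\alpha)\) and \(\sigma=\min(v_q(Y),\beta)\). I expect the ideal generated by \(XY\), \(q^\alpha Y\) and \(q^\beta(X-Y)\) modulo \(q^n\) to be exactly \(q^{\rho+\sigma}\Z_q/q^n\Z_q\). The \(X,Y\)-dependent constant should be a combination of \(XY\)-type monomials, hence already in this image. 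It follows that solvability is equivalent to \(c\in q^{\rho+\sigma}\Z_q+q^n\Z_q\). Since \(\rho+\sigma\le n\), this is \(\rho+\sigma\le e\) when \(e<n\), and is automatic otherwise, which matches the range of summation.

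Verifying this image computation uniformly is the main obstacle. It needs a short case check according to whether \(\rho<\alpha\) and \(\sigma<\beta\), which decides whether \(XY\), \(q^\alpha Y\) or \(q^\beta X\) achieves the minimal valuation; cancellation in \(X-Y\) occurs only when \(v(X)=v(Y)\), and there the \(XY\) term dominates. With the image known, the fiber size is
\[
\frac{q^{2n+\alpha}}{q^\alpha\cdot q^{n-\rho-\sigma}}=q^{n+\rho+\sigma}.
\]
The number of \(X\bmod q^\alpha\) with truncated valuation \(\rho\) is \(q^{\alpha-\rho}\omega_\alpha(\rho)\), and similarly for \(Y\). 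Multiplying, the powers of \(q\) combine as \(q^{-2n}\cdot q^{n+\rho+\sigma}\cdot q^{\alpha-\rho}q^{\beta-\sigma}=1\), leaving the summand \(\omega_\alpha(\rho)\omega_\beta(\sigma)\) in \eqref{eq:kloosterman-averaged-cluster}. Summing over \(\rho,\sigma\) with \(\rho+\sigma\le e\) gives the lemma. Since the computation never uses the residue characteristic, the argument is uniform in \(q\).
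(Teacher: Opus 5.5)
Your proposal is correct and follows the paper's proof essentially step for step. You use the divided-difference congruence to eliminate $b$, which reduces everything to one linear congruence in $(a,b',Z)$ whose image is $q^{\rho+\sigma}\Z_q/q^n\Z_q$. That gives fibers of size $q^{n+\rho+\sigma}$, which you then weight by the number of classes with truncated valuations $\rho,\sigma$.

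The image computation you flag as the main obstacle is immediate if, as the paper does, you take lifts with $v_q(X)=\rho$ and $v_q(Y)=\sigma$ (e.g.\ $X=q^\alpha$ for the zero class). Then $XY$ has valuation exactly $\rho+\sigma$, while $q^\alpha Y$ and $q^\beta(X-Y)$ have valuation at least $\rho+\sigma$. The constant term is $-XY(X+Y)$, which is absorbed by the shift $\eta=a-X-Y$.
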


\begin{proof}
For \(d\ge0\) and \(\bar X\in\Z_q/q^d\Z_q\), choose a lift
\(X\in\Z_q\) and define
\[
\nu_d(\bar X):=\min\{v_q(X),d\},
\]
where \(v_q(0)=+\infty\).  This is independent of the lift; in particular,
\(\nu_d(0)=d\).  Fix classes
\(\bar X\bmod q^\alpha\) and \(\bar Y\bmod q^\beta\), and put
\[
\rho=\nu_\alpha(\bar X),
\qquad
\sigma=\nu_\beta(\bar Y).
\]
Choose representatives \(X,Y\in\Z_q\) of \(\bar X,\bar Y\) with
\[
v_q(X)=\rho,\qquad v_q(Y)=\sigma;
\]
for the zero classes one may take \(X=q^\alpha\) and \(Y=q^\beta\),
respectively.  The case \(n=0\) is immediate, so suppose \(n>0\).

If \(\alpha\ne\beta\), the involution
\[
(\bar X,\bar Y,Z)\longmapsto(\bar Y,\bar X,-Z)
\]
identifies \(\mathfrak C_\Phi(\alpha,\beta)\) with
\(\mathfrak C_\Phi(\beta,\alpha)\).  The case \(\alpha=\beta\) requires
no interchange.  The right-hand side of
\eqref{eq:kloosterman-averaged-cluster} is also symmetric in
\(\alpha,\beta\).  It therefore suffices to assume
\(\alpha\le\beta\).  By
\eqref{eq:cluster-divided-difference}--\eqref{eq:cluster-linear}, the
three conditions, relative to the chosen representatives \(X,Y\), are
equivalent to
\[
D_{\Phi_{a,b}}(X,Y)\equiv0\pmod {q^\alpha},
\qquad
\Phi_{a,b}(Y)
\equiv q^\beta(X-Y)Z\pmod {q^n}.
\]
Recall that 
\[
D_{\Phi_{a,b}}(X,Y)
=X^2+XY+Y^2-a(X+Y)+b,
\]
the first congruence is equivalent to
\[
b\equiv a(X+Y)-(X^2+XY+Y^2)\pmod {q^\alpha}.
\]
As \(n=\alpha+\beta\), the classes \(b\bmod q^n\) satisfying this
congruence are uniquely parametrized by \(\tau\bmod q^\beta\) through
\[
b\equiv
a(X+Y)-(X^2+XY+Y^2)+q^\alpha\tau\pmod {q^n}.
\]
Put
\[
\eta:=a-X-Y\in\Z_q/q^n\Z_q.
\]
Substitution modulo \(q^n\) gives
\[
\Phi_{a,b}(Y)\equiv XY\eta+q^\alpha Y\tau-c\pmod {q^n}.
\]
Define the homomorphism
\[
\mathcal L_{X,Y}\colon
(\Z_q/q^n\Z_q)\times(\Z_q/q^\beta\Z_q)
\times(\Z_q/q^\alpha\Z_q)
\longrightarrow \Z_q/q^n\Z_q
\]
by
\[
\mathcal L_{X,Y}(\eta,\tau,Z)
:=XY\eta+q^\alpha Y\tau-q^\beta(X-Y)Z.
\]
For the fixed representatives \(X,Y\), this is well defined: changing
\(\eta,\tau,Z\) by \(q^n,q^\beta,q^\alpha\), respectively, changes its
value by a multiple of \(q^n\).  Thus the triples \((a,b,Z)\) associated
with the fixed classes \(\bar X,\bar Y\) are in bijection with the
solutions of
\begin{equation}\label{eq:kloosterman-linear-cluster}
\mathcal L_{X,Y}(\eta,\tau,Z)=c
\quad\text{in }\Z_q/q^n\Z_q,
\end{equation}
where
\[
\eta\in \Z_q/q^n\Z_q,
\qquad
\tau\in \Z_q/q^\beta\Z_q,
\qquad
Z\in \Z_q/q^\alpha\Z_q.
\]
By construction, \(v_q(XY)=\rho+\sigma\).  Hence the restriction
of \(\mathcal L_{X,Y}\) to the triples \((\eta,0,0)\) has image
\[
q^{\rho+\sigma}\Z_q/q^n\Z_q.
\]
 We observe that 
\[
\alpha+\sigma\ge\rho+\sigma
\]
and, since \(\rho\le\alpha\le\beta\),
\[
\beta+v_q(X-Y)
\ge\beta+\min(\rho,\sigma)
\ge\rho+\sigma.
\]
Hence the coefficient \(q^\alpha Y\) of \(\tau\) and the coefficient
\(-q^\beta(X-Y)\) of \(Z\) both have valuation at least
\(\rho+\sigma\). 

Let us denote by \(\operatorname{im}(\mathcal L_{X,Y})\)
the image of \(\mathcal L_{X,Y}\), then
\[
\operatorname{im}(\mathcal L_{X,Y})
=q^{\rho+\sigma}\Z_q/q^n\Z_q.
\]
Thus the homomorphism itself depends on the chosen representatives
\(X,Y\), but its image depends only on \(\rho,\sigma\).  In particular,
\eqref{eq:kloosterman-linear-cluster} is soluble precisely when
\[
\rho+\sigma\le v_q(c)=e.
\]
Under this condition the image has cardinality
\(q^{n-\rho-\sigma}\).  Since the domain has cardinality \(q^{2n}\),
the fiber \(\mathcal L_{X,Y}^{-1}(c)\) has cardinality
\(q^{n+\rho+\sigma}\).

It remains to sum over \(\bar X,\bar Y\).  The number of classes
\(\bar X\bmod q^\alpha\) satisfying \(\nu_\alpha(\bar X)=\rho\) is
\[
\#\{\bar X\bmod q^\alpha:\nu_\alpha(\bar X)=\rho\}
=
\begin{cases}
q^{\alpha-\rho}(1-q^{-1}),&\rho<\alpha,\\
1,&\rho=\alpha.
\end{cases}
\]
Hence
\begin{equation}
\label{eqn: counting-identity}
q^{-\alpha}
\#\{\bar X\bmod q^\alpha:\nu_\alpha(\bar X)=\rho\}q^\rho
=\omega_\alpha(\rho),
\end{equation}
and the analogous identity holds for \(\bar Y\).  For each fixed
\(\bar X,\bar Y\), the preceding fiber calculation counts the triples
\((a,b,Z)\)
satisfying the conditions of Definition~\ref{def:cluster-set}.
Therefore
\begin{equation}
q^{-2n}
\sum_{(a,b)\bmod q^n}
\#\mathfrak C_{\Phi_{a,b}}(\alpha,\beta)
=
q^{-2n}
\sum_{\substack{\bar X\bmod q^\alpha,\ \bar Y\bmod q^\beta\\
\rho+\sigma\le e}}
q^{n+\rho+\sigma}.
\end{equation}
Since \(n=\alpha+\beta\), we have 
$
q^{-2n}q^{n+\rho+\sigma}
=(q^{-\alpha}q^\rho)(q^{-\beta}q^\sigma).
$
Grouping the classes \(\bar X,\bar Y\) according to \(\rho,\sigma\) and applying
\eqref{eqn: counting-identity} to each factor now gives
\[
q^{-2n}
\sum_{(a,b)\bmod q^n}
\#\mathfrak C_{\Phi_{a,b}}(\alpha,\beta)
=
\sum_{\substack{0\le\rho\le\alpha,\ 0\le\sigma\le\beta\\
\rho+\sigma\le e}}
\omega_\alpha(\rho)\omega_\beta(\sigma),
\]
which is \eqref{eq:kloosterman-averaged-cluster}.
\end{proof}

\subsection{Evaluation of the local factors}

\begin{proof}[Proof of Theorem~\ref{thm:kloosterman-local-factor}]
Define
\[
B_k
:=
q^{-2k}
\sum_{(a,b)\bmod q^k}
[t^k]J_{a,b}(t),
\qquad
B_{-1}:=0.
\]
The HNF formula \eqref{eq:kloosterman-hnf-formula} shows that
\([t^j]J_{a,b}(t)\) depends only on
\((a,b)\bmod q^j\).  Hence, for \(k\ge1\),
\[
\begin{aligned}
q^{-2k}
\sum_{(a,b)\bmod q^k}
[t^{k-1}]J_{a,b}(t)
&=
q^{-2k}q^2
\sum_{(a,b)\bmod q^{k-1}}
[t^{k-1}]J_{a,b}(t)\\
&=B_{k-1}.
\end{aligned}
\]
Together with the case \(k=0\),
Proposition~\ref{prop:kloosterman-yun-identity} gives
\begin{equation}\label{eq:kloosterman-A-B-relation}
\mathcal A_k=B_k-B_{k-1}
\qquad(k\ge0).
\end{equation}

From \eqref{eq:kloosterman-hnf-formula} we obtain 
\[
[t^k]J_{a,b}(t)
=
\sum_{\substack{\alpha,\beta,h\ge0\\
\alpha+2\beta+3h=k}}
\#\mathfrak C_{\Phi_{a,b}}(\alpha,\beta).
\]

 It follows from definition that
\(\#\mathfrak C_{\Phi_{a,b}}(\alpha,\beta)\) depends only on
\((a,b)\bmod q^{\alpha+\beta}\).

If \(\alpha+2\beta+3h=k\), then
\(k-(\alpha+\beta)=\beta+3h\ge0\).  Each pair modulo
\(q^{\alpha+\beta}\) therefore has
\(q^{2(k-\alpha-\beta)}\) lifts modulo \(q^k\).  Substituting the
coefficient formula above into the definition of \(B_k\) and
interchanging the finite sums gives
\[
\begin{aligned}
B_k
&=
\sum_{\substack{\alpha,\beta,h\ge0\\
\alpha+2\beta+3h=k}}
q^{-2k}
\sum_{(a,b)\bmod q^k}
\#\mathfrak C_{\Phi_{a,b}}(\alpha,\beta)\\
&=
\sum_{\substack{\alpha,\beta,h\ge0\\
\alpha+2\beta+3h=k}}
q^{-2k}q^{2(k-\alpha-\beta)}
\sum_{(a,b)\bmod q^{\alpha+\beta}}
\#\mathfrak C_{\Phi_{a,b}}(\alpha,\beta)\\
&=
\sum_{\substack{\alpha,\beta,h\ge0\\
\alpha+2\beta+3h=k}}
q^{-2(\alpha+\beta)}
\sum_{(a,b)\bmod q^{\alpha+\beta}}
\#\mathfrak C_{\Phi_{a,b}}(\alpha,\beta).
\end{aligned}
\]
Lemma~\ref{lem:kloosterman-averaged-cluster} now gives
\[
\begin{aligned}
\sum_{k\ge0}B_kx^k
&=
\frac1{1-x^3}
\sum_{\alpha,\beta\ge0}
\sum_{\substack{0\le\rho\le\alpha,\ 0\le\sigma\le\beta\\
\rho+\sigma\le e}}
\omega_\alpha(\rho)\omega_\beta(\sigma)x^{\alpha+2\beta}\\
&=
\frac1{1-x^3}
\sum_{\substack{\rho,\sigma\ge0\\\rho+\sigma\le e}}
\left(\sum_{\alpha\ge\rho}\omega_\alpha(\rho)x^\alpha\right)
\left(\sum_{\beta\ge\sigma}\omega_\beta(\sigma)x^{2\beta}\right).
\end{aligned}
\]
For fixed \(\rho,\sigma\),
\[
\sum_{\alpha\ge\rho}\omega_\alpha(\rho)x^\alpha
=x^\rho\frac{1-q^{-1}x}{1-x},
\qquad
\sum_{\beta\ge\sigma}\omega_\beta(\sigma)x^{2\beta}
=x^{2\sigma}\frac{1-q^{-1}x^2}{1-x^2}.
\]
It follows that
\[
\sum_{k\ge0}B_kx^k
=
\frac{(1-q^{-1}x)(1-q^{-1}x^2)}
{(1-x)(1-x^2)(1-x^3)}
\sum_{\substack{\rho,\sigma\ge0\\\rho+\sigma\le e}}
x^{\rho+2\sigma}.
\]
By \eqref{eq:kloosterman-diagonal-series} and
\eqref{eq:kloosterman-A-B-relation},
\[
D_q(z)
=
(1-x)\sum_{k\ge0}B_kx^k,
\]
and hence
\[
D_q(z)
=
\frac{(1-q^{-1}x)(1-q^{-1}x^2)}
{(1-x^2)(1-x^3)}
\sum_{\substack{\rho,\sigma\ge0\\\rho+\sigma\le e}}
x^{\rho+2\sigma}.
\]
Finally,
\[
\sum_{\substack{\rho,\sigma\ge0\\\rho+\sigma\le e}}
x^{\rho+2\sigma}
=
\frac{(1-x^{e+1})(1-x^{e+2})}
{(1-x)(1-x^2)},
\]
and \eqref{eq:kloosterman-local-factor-rational} follows.
\end{proof}

The grouped formal power series in
Theorem~\ref{thm:kloosterman-local-factor} converges for \(|x|<1\), or
\(\Re(z)>0\), and the rational formula gives its meromorphic
continuation.  For \(e=0\), it specializes to
\begin{equation}\label{eq:kloosterman-unit-local-factor}
D_q(z)
=
\frac{(1-q^{-z-1})(1-q^{-2z-1})}
{(1-q^{-2z})(1-q^{-3z})}.
\end{equation}

\begin{corollary}
\label{cor:kloosterman-prime-power}
Let \(p\) be a prime, let \(k\ge1\), and take 
\(c=p^k\), so the polynomial has constant coefficient \(-p^k\).
At every local prime \(q\), let \(D_q(z)\) be the series
\eqref{eq:kloosterman-local-series} formed with the image of
\(p^k\) in \(\Z_q\).  Then
\[
D_q(z)=
\begin{cases}
\displaystyle
\frac{(1-q^{-z-1})(1-q^{-2z-1})}
{(1-q^{-2z})(1-q^{-3z})},
&q\ne p,\\[14pt]
\displaystyle
\frac{
(1-p^{-z(k+1)})(1-p^{-z(k+2)})
(1-p^{-z-1})(1-p^{-2z-1})
}{
(1-p^{-z})(1-p^{-2z})^2(1-p^{-3z})
},
&q=p.
\end{cases}
\]
For \(k=1\), the distinguished factor simplifies to
\[
D_p(z)
=
\frac{(1-p^{-z-1})(1-p^{-2z-1})}
{(1-p^{-z})(1-p^{-2z})}.
\]
\end{corollary}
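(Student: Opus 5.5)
This corollary is a direct specialization of Theorem~\ref{thm:kloosterman-local-factor}, so I will not redo any local computation. The only input that varies with \(q\) is \(e=v_q(c)\) for \(c=p^k\) viewed in \(\Z_q\). The plan is therefore to compute \(e\) in the two cases, substitute into \eqref{eq:kloosterman-local-factor-rational}, and simplify the resulting rational function in \(x=q^{-z}\).

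\emph{Case \(q\ne p\).} Here \(p^k\) is a unit in \(\Z_q\), so \(e=0\). Substituting gives numerator \((1-x)(1-x^2)(1-q^{-1}x)(1-q^{-1}x^2)\). The factors \((1-x)\) and \((1-x^2)\) cancel one copy each from the denominator \((1-x)(1-x^2)^2(1-x^3)\), leaving \((1-x^2)(1-x^3)\). This is exactly \eqref{eq:kloosterman-unit-local-factor}. Rewriting \(q^{-1}x=q^{-z-1}\) and \(q^{-1}x^2=q^{-2z-1}\) gives the first displayed formula.

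\emph{Case \(q=p\).} Here \(e=k\), and the second displayed form of Theorem~\ref{thm:kloosterman-local-factor}, with \(q=p\) and \(e=k\), is literally the claimed expression. No simplification is needed.

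\emph{Special case \(k=1\).} The numerator factors are \((1-x^2)(1-x^3)\). They cancel one copy of \((1-x^2)\) and the factor \((1-x^3)\) in the denominator, leaving \((1-x)(1-x^2)\) downstairs. This is the stated simplification.

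The only point needing care is that Theorem~\ref{thm:kloosterman-local-factor} applies for every prime \(q\), including \(q=2,3\) and \(q=p\). That theorem is stated uniformly for arbitrary nonzero \(c\in\Z_q\), so no case restriction arises. One should also note that the definition \eqref{eq:kloosterman-local-series} of \(D_q(z)\) uses the image of \(p^k\) in \(\Z_q\), matching the hypothesis of the theorem. There is no real obstacle beyond bookkeeping of these cancellations.
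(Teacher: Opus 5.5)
Your proposal is correct and follows the paper's own proof: specialize Theorem~\ref{thm:kloosterman-local-factor} to \(c=p^k\), using \(e=v_q(p^k)=0\) for \(q\ne p\) and \(e=k\) for \(q=p\). Your cancellations are right. For \(e=0\), the factors \((1-x)(1-x^2)\) cancel. For \(k=1\), the factors \((1-x^2)(1-x^3)\) cancel.
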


\begin{proof}
Apply Theorem~\ref{thm:kloosterman-local-factor} to the image of \(p^k\)
in \(\Z_q\).  Its valuation is zero for \(q\ne p\) and \(k\) for
\(q=p\).  When \(k=1\), the factors \(1-x^2\) and \(1-x^3\) cancel in
\eqref{eq:kloosterman-local-factor-rational}.
\end{proof}

Replacing \(c=p^k\) by \(c=-p^k\) changes the polynomial's constant
coefficient from \(-p^k\) to \(+p^k\), but it does not change any local
factor, by Theorem~\ref{thm:kloosterman-local-factor}.  Thus
Corollary~\ref{cor:kloosterman-prime-power} covers both signs in the
Deng--Espinosa series.

\begin{corollary}\label{cor:kloosterman-euler-product}
Let \(0\ne c\in\Z\).  For every prime \(q\), let \(D_q(z)\) denote the
series \eqref{eq:kloosterman-local-series}
formed with the image of \(c\) in \(\Z_q\).  For each prime
\(p\mid c\), write \(k_p=v_p(c)\).  Then, for \(\Re(z)>1\),
\[
\prod_q D_q(z)
=
\frac{\zeta(2z)\zeta(3z)}
{\zeta(z+1)\zeta(2z+1)}
\prod_{p\mid c}
\frac{(1-p^{-z(k_p+1)})(1-p^{-z(k_p+2)})}
{(1-p^{-z})(1-p^{-2z})}.
\]
When \(|c|=1\), the finite product over \(p\mid c\) is understood to be
\(1\).
The right-hand side gives the meromorphic continuation of the product.
\end{corollary}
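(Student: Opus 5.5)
The plan is to take the Euler product of the local factors in Theorem~\ref{thm:kloosterman-local-factor}, separating the primes \(q\nmid c\) from the finitely many \(p\mid c\), and then identify the result with the stated ratio of zeta functions.

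\textbf{Primes \(q\nmid c\).} Here \(e=v_q(c)=0\), and \eqref{eq:kloosterman-unit-local-factor} gives
\[
D_q(z)=\frac{(1-q^{-z-1})(1-q^{-2z-1})}{(1-q^{-2z})(1-q^{-3z})}.
\]
This is exactly the \(q\)-Euler factor of \(\zeta(2z)\zeta(3z)/(\zeta(z+1)\zeta(2z+1))\).

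\textbf{Primes \(p\mid c\).} With \(e=k_p\), the factor from Theorem~\ref{thm:kloosterman-local-factor} equals the unit factor above multiplied by
\[
\frac{(1-p^{-z(k_p+1)})(1-p^{-z(k_p+2)})}{(1-p^{-z})(1-p^{-2z})}.
\]
To check this, divide the general formula by the \(e=0\) formula: the common factors \((1-q^{-1}x)(1-q^{-1}x^2)/((1-x^2)(1-x^3))\) cancel, leaving \((1-x^{e+1})(1-x^{e+2})/((1-x)(1-x^2))\).

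\textbf{Assembling the product.} Multiplying over all \(q\) gives the full Euler product of the zeta ratio times a finite product of the corrections for \(p\mid c\). When \(|c|=1\) there are no primes \(p\mid c\), and that finite product is empty.

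\textbf{Convergence for \(\Re(z)>1\).} For \(\Re(z)>1\), each of \(\zeta(2z)\), \(\zeta(3z)\), \(\zeta(z+1)\), \(\zeta(2z+1)\) has an absolutely convergent Euler product. Moreover \(\zeta(z+1)\) and \(\zeta(2z+1)\) are nonvanishing there, so their reciprocals are given by the convergent products of \((1-q^{-z-1})\) and \((1-q^{-2z-1})\). Hence the infinite product of the \(D_q\) converges absolutely and may be rearranged into the product of these four Euler products.

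Each local factor here is the value of the formal series \eqref{eq:kloosterman-local-series}. That series converges for \(|x|<1\), as noted after the proof of Theorem~\ref{thm:kloosterman-local-factor}, so it agrees with the rational expression.

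\textbf{Meromorphic continuation.} The right-hand side is a ratio of meromorphic functions times a finite product of entire factors (for \(p\mid c\), each correction quotient is a polynomial in \(p^{-z}\)). It therefore gives the meromorphic continuation of the product.

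The only mildly delicate step is justifying absolute convergence in order to split the infinite product, and this is standard for \(\Re(z)>1\).
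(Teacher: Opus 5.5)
Your proposal is correct and follows the paper's proof. You write each $D_q$ as the $e=0$ factor \eqref{eq:kloosterman-unit-local-factor} times, for $p\mid c$, the finite correction $(1-x^{k_p+1})(1-x^{k_p+2})/((1-x)(1-x^2))$, and you identify the product of the $e=0$ factors with the Euler product of $\zeta(2z)\zeta(3z)/(\zeta(z+1)\zeta(2z+1))$. Your remarks on absolute convergence and on the correction being a polynomial in $p^{-z}$ fill in details the paper leaves implicit; the polynomiality holds because the correction equals $\sum_{\rho+\sigma\le k_p}x^{\rho+2\sigma}$, as shown in the proof of Theorem~\ref{thm:kloosterman-local-factor}.
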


\begin{proof}
The product of the unit factors is
\[
\prod_q
\frac{(1-q^{-z-1})(1-q^{-2z-1})}
{(1-q^{-2z})(1-q^{-3z})}
=
\frac{\zeta(2z)\zeta(3z)}
{\zeta(z+1)\zeta(2z+1)}.
\]
If \(p\mid c\), then \(v_p(c)=k_p\), and
Theorem~\ref{thm:kloosterman-local-factor} replaces the unit factor at
\(p\) by its product with
\[
\frac{(1-p^{-z(k_p+1)})(1-p^{-z(k_p+2)})}
{(1-p^{-z})(1-p^{-2z})}.
\]
For \(q\nmid c\), one has \(v_q(c)=0\), so no correction is present.
Multiplying the finitely many corrections over \(p\mid c\) proves the
formula.
\end{proof}

\bibliographystyle{plain}
\bibliography{Bibliography}

\end{document}